\author{A.A. Vasil'eva}
\title{Widths of function classes on sets with tree-like
structure\footnote{The research was carried
out with the financial support of the Russian Foundation for Basic
Research (grants no. 13-01-00022, 12-01-00554)}}
\date{}
\begin{document}

\maketitle

\newenvironment{Biblio}{%
                  \renewcommand{\refname}{\footnotesize REFERENCES}%
                  }

\def\inff{\mathop{\smash\inf\vphantom\sup}}
\renewcommand{\le}{\leqslant}
\renewcommand{\ge}{\geqslant}
\newcommand{\sgn}{\mathrm {sgn}\,}
\newcommand{\inter}{\mathrm {int}\,}
\newcommand{\dist}{\mathrm {dist}}
\newcommand{\supp}{\mathrm {supp}\,}
\newcommand{\R}{\mathbb{R}}
\renewcommand{\C}{\mathbb{C}}
\newcommand{\Z}{\mathbb{Z}}
\newcommand{\N}{\mathbb{N}}
\newcommand{\Q}{\mathbb{Q}}
\theoremstyle{plain}
\newtheorem{Trm}{Theorem}
\newtheorem{trma}{Theorem}

\newtheorem{Def}{Definition}
\newtheorem{Sup}{Assumption}
\newtheorem{Cor}{Corollary}
\newtheorem{Lem}{Lemma}
\newtheorem{Rem}{Remark}
\newtheorem{Sta}{Proposition}
\renewcommand{\proofname}{\bf Proof}
\renewcommand{\thetrma}{\Alph{trma}}

\section{Introduction}

In this paper, estimates for Kolmogorov, Gelfand and linear widths
of functional classes on sets with a tree-like structure are
obtained. As examples we consider weighted Sobolev classes on a
John domain, as well as some function classes on a metric and
combinatorial tree.

Let $\Omega \subset \R^d$ be a bounded domain (an open connected
set), and let $g$, $v:\Omega\rightarrow \R_+$ be measurable
functions. For each measurable vector-valued function $\psi:\
\Omega\rightarrow \R^l$, $\psi=(\psi_k) _{1\le k\le l}$, and for
each $p\in [1, \, \infty]$, we put
$$
\|\psi\|_{L_p(\Omega)}= \Big\|\max _{1\le k\le l}|\psi _k |
\Big\|_p=\left(\int \limits_\Omega \max _{1\le k\le l}|\psi _k(x)
|^p\, dx\right)^{1/p}
$$
(appropriately modified if $p=\infty$). Let
$\overline{\beta}=(\beta _1, \, \dots, \, \beta _d)\in
\Z_+^d:=(\N\cup\{0\})^d$, $|\overline{\beta}| =\beta _1+
\ldots+\beta _d$. For any distribution $f$ defined on $\Omega$ we
write $\displaystyle \nabla ^r\!f=\left(\partial^{r}\! f/\partial
x^{\overline{\beta}}\right)_{|\overline{\beta}| =r}$ (here partial
derivatives are taken in the sense of distributions), and denote
by $l_{r,d}$ the number of components of the vector-valued
distribution $\nabla ^r\!f$. We set
$$
W^r_{p,g}(\Omega)=\left\{f:\ \Omega\rightarrow \R\big| \; \exists
\psi :\ \Omega\rightarrow \R^{l_{r,d}}\!:\ \| \psi \|
_{L_p(\Omega)}\le 1, \, \nabla ^r\! f=g\cdot \psi\right\}
$$
\Big(we denote the corresponding function $\psi$ by
$\displaystyle\frac{\nabla ^r\!f}{g}$\Big),
$$
\| f\|_{L_{q,v}(\Omega)}{=}\| f\|_{q,v}{=}\|
fv\|_{L_q(\Omega)},\qquad L_{q,v}(\Omega)=\left\{f:\Omega
\rightarrow \R| \; \ \| f\| _{q,v}<\infty\right\}.
$$
We call the set $W^r_{p,g}(\Omega)$ a weighted Sobolev class.
Observe that $W^r_{p,1}(\Omega)=W^r_p(\Omega)$.

For properties of weighted Sobolev spaces and their
generalizations, see the books \cite{triebel, kufner,
edm_trieb_book, triebel1, edm_ev_book, myn_otel} and the survey
paper \cite{kudr_nik}. Sufficient conditions for boundedness and
compactness of embeddings of weighted Sobolev spaces into weighted
$L_q$-spaces were obtained by Kudryavtsev, Ne\v{c}as, Kufner,
Triebel, Gurka and Opic, Besov, Caso and D’Ambrosio, and other
authors (see, e.g., \cite{kudrjavcev, j_necas, kufner, triebel,
gur_opic1, besov4, ambr_l}). Note that, in these papers, the
definition of weighted Sobolev classes included conditions not
only on the derivatives of order $r$ but also on derivatives of
lesser order. For more details, see \cite{vas_vl_raspr}.

Let $X$, $Y$ be sets, $f_1$, $f_2:\ X\times Y\rightarrow \R_+$. We
write $f_1(x, \, y)\underset{y}{\lesssim} f_2(x, \, y)$ (or
$f_2(x, \, y)\underset{y}{\gtrsim} f_1(x, \, y)$) if, for any
$y\in Y$, there exists $c(y)>0$ such that $f_1(x, \, y)\le
c(y)f_2(x, \, y)$ for each $x\in X$; $f_1(x, \,
y)\underset{y}{\asymp} f_2(x, \, y)$ if $f_1(x, \, y)
\underset{y}{\lesssim} f_2(x, \, y)$ and $f_2(x, \,
y)\underset{y}{\lesssim} f_1(x, \, y)$.

For $x\in \R^d$ and $a>0$ we shall denote by  $B_a(x)$ the closed
Euclidean ball of radius $a$ in $\R^d$ centered at the point $x$.

Let $|\cdot|$ be an arbitrary norm on $\R^d$, and let $E, \,
E'\subset \R^d$, $x\in \R^d$. We put
$$
{\rm diam}_{|\cdot|}\, E=\sup \{|y-z|:\; y, \, z\in E\}, \;\; {\rm
dist}_{|\cdot|}\, (x, \, E)=\inf \{|x-y|:\; y\in E\},
$$
$$
{\rm dist}_{|\cdot|}\, (E', \, E)=\inf \{|x-y|:\; x\in E, \, y\in
E'\}.
$$

Denote by $AC[t_0, \, t_1]$ the space of absolutely continuous
functions on an interval $[t_0, \, t_1]$.
\begin{Def}
\label{fca} Let $\Omega\subset\R^d$ be a bounded domain, and let
$a>0$. We say that $\Omega \in {\bf FC}(a)$ if there exists a
point $x_*\in \Omega$ such that, for any $x\in \Omega$, there
exists a curve $\gamma _x:[0, \, T(x)] \rightarrow\Omega$ with the
following properties:
\begin{enumerate}
\item $\gamma _x\in AC[0, \, T(x)]$, $\left|\frac{d\gamma _x(t)}{dt}\right|=1$ a.e.,
\item $\gamma _x(0)=x$, $\gamma _x(T(x))=x_*$,
\item $B_{at}(\gamma _x(t))\subset \Omega$ for any $t\in [0, \, T(x)]$.
\end{enumerate}
\end{Def}

\begin{Def}
We say that $\Omega$ satisfies the John condition (and call
$\Omega$ a John domain) if $\Omega\in {\bf FC}(a)$ for some $a>0$.
\end{Def}

\begin{Rem}
\label{rem2} If $\Omega\in {\bf FC}(a)$ and a point $x_*$ is such
as in Definition {\rm \ref{fca}}, then
\begin{align}
\label{diam_dist} {\rm diam}_{|\cdot|}\, \Omega
\underset{d,a,|\cdot|}{\lesssim} {\rm dist}_{|\cdot|}\, (x_*, \,
\partial \Omega).
\end{align}
\end{Rem}

Let $(X, \, \|\cdot\|_X)$ be a normed space, let $X^*$ be its
dual, and let ${\cal L}_n(X)$, $n\in \Z_+$, be the family of
subspaces of $X$ of dimension at most $n$. Denote by $L(X, \, Y)$
the space of continuous linear operators from $X$ into a normed
space $Y$. Also, by ${\rm rk}\, A$ denote the dimension of the
image of an operator $A\in L(X, \, Y)$, and by $\| A\|
_{X\rightarrow Y}$, its norm.

By the Kolmogorov $n$-width of a set $M\subset X$ in the space
$X$, we mean the quantity
$$
d_n(M, \, X)=\inff _{L\in {\cal L}_n(X)} \sup_{x\in M}\inff_{y\in
L}\|x-y\|_X,
$$
by the linear $n$-width, the quantity
$$
\lambda_n(M, \, X) =\inff_{A\in L(X, \, X), \, {\rm rk} A\le
n}\sup _{x\in M}\| x-Ax\| _X,
$$
and by the Gelfand $n$-width, the quantity
$$
d^n(M, \, X)=\inff _{x_1^*, \, \dots, \, x_n^*\in X^*} \sup
\{\|x\|:\; x\in M, \, x^*_j(x)=0, \; 1\le j\le n\}=
$$
$$
=\inff _{A\in L(X, \, \R^n)}\sup \{\|x\|:\; x\in M\cap \ker A\}.
$$
In \cite{pietsch1} the definition of strict $s$-numbers of a
linear continuous operator was given. In particular, Kolmogorov
numbers of an operator $A:X\rightarrow Y$ coincide with Kolmogorov
widths $d_n(A(B_X), \, Y)$ (here $B_X$ is the unit ball in the
space $X$); if the operator is compact, then its approximation
numbers coincide with linear widths $\lambda_n(A(B_X), \, Y)$ (see
the paper of Heinrich \cite{heinr}). If $X$ and $Y$ are both
uniformly convex and uniformly smooth and $A:X\rightarrow Y$ is a
bounded linear map with trivial kernel and range dense in $Y$,
then Gelfand numbers of $A$ are equal to $d^n(A(B_X), \, Y)$ (see
the paper of Edmunds and Lang \cite{edm_lang1}).

In the 1960–1970s problems concerning the values of the widths of
function classes in $L_q$ and of finite-dimensional balls $B_p^n$
in $l_q^n$ were intensively studied. Here $l_q^n$ $(1\le q\le
\infty)$ is the space $\R^n$ with the norm
$$
\|(x_1, \, \dots , \, x_n)\| _q\equiv\|(x_1, \, \dots ,
\, x_n)\| _{l_q^n}= \left\{
\begin{array}{l}(| x_1 | ^q+\dots+ | x_n | ^q)^{1/q},\text{ if
}q<\infty ,\\ \max \{| x_1 | , \, \dots, \, | x_n |\},\text{ if
}q=\infty ,\end{array}\right .
$$
$B_p^n$ is the unit ball in $l_p^n$. For more details, see
\cite{tikh_nvtp, itogi_nt, kniga_pinkusa}.

Besov in \cite{besov_peak_width} proved the result on the
coincidence of orders of widths
$$
d_n(W^r_p(K_\sigma), \, L_q(K_\sigma))\underset{p,q,r,d,\sigma}
{\asymp} d_n(W^r_p([0, \, 1]^d), \, L_q([0, \, 1]^d));
$$
here $$K_\sigma=\{(x_1, \, \dots, \, x_{d-1}, \, x_d): \; |(x_1,
\, \dots, \, x_{d-1})|^{1/\sigma}<x_d<1\},$$ $\sigma>1$,
$r-[\sigma(d-1)+1]\left(\frac 1p-\frac 1q\right)_+>0$ and some
conditions on the parameters $p$, $q$, $r$, $d$ hold (see Theorem
\ref{sob_dn} below). For $r=1$, $p=q$ and more general ridged
domains, estimates of approximation numbers were obtained by W.D.
Evans and D.J. Harris \cite{evans_har}.

An upper estimate of Kolmogorov widths of Sobolev classes on a
cube in a weighted $L_p$-space was first obtained by Birman and
Solomyak \cite{birm} (for $q>\max \{p, \, 2\}$, the orders of this
bound are not sharp). In \cite{el_kolli}, El Kolli had found the
orders of the quantities $d_n(W^r_{p,g}(\Omega), \,
L_{q,v}(\Omega))$, where $\Omega$ is a bounded domain with smooth
boundary, $p=q$, and weight functions $g$ and $v$ were equal to a
power of the distance to the boundary of $\Omega$; by using Banach
space interpolation, Triebel \cite{triebel} extended the upper
bounds to the widths $d_n(W^r_{p,g}(\Omega), \, L_{q,v}(\Omega))$
for $p\le q$. Estimates for linear widths of weighted Sobolev
classes on~$\R^d$ with weights of the form
$w_\alpha(x)=(1+\|x\|_2^2)^{\alpha/2}$ were established by Mynbaev
and Otelbaev in \cite{myn_otel}. For intersections of some
weighted Sobolev classes on a cube with weights that are powers of
the distance from the boundary, order estimates of widths were
obtained by Boykov \cite{boy_1, boy_2}. In \cite{tr_jat} Triebel
obtained estimates of approximation numbers for weighted Sobolev
classes with weights that have singularity at a~point (case
$p=q$); this result was generalized in \cite{vas_sing}. For
general weights, the Kolmogorov’s and approximation numbers of an
embedding operator of Sobolev classes in $L_p$ were estimated by
Lizorkin, Otelbaev, Aitenova and Kusainova \cite{liz_otel1,
otelbaev, ait_kus1, ait_kus2}.

Let us formulate the main result of this paper.

Denote by $\mathbb{H}$ the set of all nondecreasing positive
functions defined on $(0, \, 1]$.

Introduce the notion of $h$-set in accordance with
\cite{m_bricchi1}.
\begin{Def}
\label{h_set} Let $\Gamma\subset \R^d$ be a nonempty compact set
and $h\in \mathbb{H}$. We say that $\Gamma$ is an $h$-set if there
are a $\hat c\ge 1$ and a finite countably additive measure $\mu$
on $\R^d$ such that $\supp \mu=\Gamma$ and $ \hat c^{-1}h(t)\le
\mu(B_t(x))\le \hat c h(t)$ for any $x\in \Gamma$ and $t\in (0, \,
1]$.
\end{Def}
Observe that the measure $\mu$ is nonnegative.

Below we consider a function $h\in \mathbb{H}$ which has the
following form in a neighborhood of the zero:
\begin{align}
\label{def_h} h(t)=t^{\theta}\Lambda(t), \;\;\; 0\le \theta<d,
\end{align}
where $\Lambda:(0, \, +\infty)\rightarrow (0, \, +\infty)$ is an
absolutely continuous function such that
\begin{align}
\label{yty} \frac{t\Lambda'(t)}{\Lambda(t)} \underset{t\to+0}{\to}
0.
\end{align}

Everywhere below, we use the notation $\log x=\log_2 x$.

\label{gamma_def}Let $\Omega\in {\bf FC}(a)$ be a bounded domain
and $\Gamma\subset \partial \Omega$ an $h$-set. Below we assume
for convenience that $\Omega\subset \left[-\frac 12, \, \frac
12\right]^d$ (the general case can be reduced to this special case
by a change of variables). Let $1<p\le \infty$, $1\le q<\infty$,
$r\in \N$, $\delta:=r+\frac dq-\frac dp>0$, $\beta_g$, $\beta_v\in
\R$, $g(x)=\varphi_g({\rm dist}_{|\cdot|}(x, \, \Gamma))$,
$v(x)=\varphi_v({\rm dist}_{|\cdot|}(x, \, \Gamma))$,
\begin{align}
\label{ghi_g0} \varphi_g(t)=t^{-\beta_g}\Psi_g(t), \;\;
\varphi_v(t)=t^{-\beta_v} \Psi_v(t),
\end{align}
and the functions $\Psi_g$ and $\Psi_v$ are absolutely continuous,
\begin{align}
\label{psi_cond} \frac{t\Psi'_g(t)}{\Psi_g(t)}
\underset{t\to+0}{\to}0, \;\; \frac{t\Psi'_v(t)}{\Psi_v(t)}
\underset{t\to+0}{\to}0,
\end{align}
and
\begin{align}
\label{muck} -\beta_vq+d-\theta>0.
\end{align}
Moreover, assume that
\begin{align}
\label{beta} \text{a) }\beta_g+\beta_v<\delta-\theta\left(\frac
1q-\frac 1p\right)_+ \quad\text{ or b) }\beta_g+\beta_v=\delta
-\theta\left(\frac 1q-\frac 1p\right)_+.
\end{align}
We also assume that
\begin{align}
\label{phi_g} \begin{array}{c} \Lambda(t)=|\log
t|^\gamma\tau(|\log t|), \;\; \Psi_g(t)=|\log t|^{-\alpha_g}
\rho_g(|\log t|),\\ \Psi_v(t)=|\log t|^{-\alpha_v} \rho_v(|\log
t|) \quad \text{in case b)}, \end{array}
\end{align}
the functions $\rho_g$, $\rho_v$ and $\tau$ are absolutely
continuous,
\begin{align}
\label{ll} \lim \limits _{y\to +\infty}\frac{y\tau'(y)}{\tau(y)}=
\lim \limits _{y\to +\infty}\frac{y\rho_g'(y)}{\rho_g(y)}=\lim
\limits _{y\to +\infty}\frac{y\rho_v'(y)}{\rho_v(y)}=0,
\end{align}
\begin{align}
\label{g0ag} \alpha:=\alpha_g+\alpha_v
>(1-\gamma)\left(\frac 1q-\frac 1p\right)_+; \quad
\gamma<0 \quad\text{if }\quad \theta=0.
\end{align}
It can readily be seen that, in this case, the functions
$\Lambda$, $\Psi_g$ and $\Psi_v$ satisfy (\ref{yty}) and
(\ref{psi_cond}), respectively.
\begin{Rem}
\label{ste} If functions $\Psi_g$ and $\Psi_v$ ($\rho_g$ and
$\rho_v$) satisfy (\ref{psi_cond}) ((\ref{ll}), respectively),
then their product and all powers of these functions satisfy a
similar condition.
\end{Rem}

Set $$\beta=\beta_g+\beta_v, \quad \rho(y)=\rho_g(y)\rho_v(y),
\quad \Psi(y)=\Psi_g(y)\Psi_v(y),$$ $\mathfrak{Z}=(r,\, d, \, p,
\, q, \, g, \, v, \, h, \, a, \, \hat c)$,
$\mathfrak{Z}_*=(\mathfrak{Z}, \, R)$, where $\hat c$ is the
constant used in Definition \ref{h_set} and \label{r_def}$R={\rm
diam}\, \Omega$.

Given $1\le p\le \infty$, we write $p'=\frac{p}{p-1}$.

In estimating Kolmogorov, linear, and Gelfand widths we set,
respectively, $\vartheta_l(M, \, X)=d_l(M, \, X)$ and $\hat q=q$,
$\vartheta_l(M, \, X)=\lambda_l(M, \, X)$ and $\hat q=\min\{q, \,
p'\}$, $\vartheta_l(M, \, X)=d^l(M, \, X)$ and $\hat q=p'$.

Denote $\psi_\Lambda(y)= \frac{1}{\Lambda\left(
\frac{1}{y}\right)}$. Then $\psi_\Lambda\in AC(0, \, \infty)$ and
$\lim \limits _{y\to\infty} \frac{y\psi_\Lambda'(y)}
{\psi_\Lambda(y)}=0$.

Let $\gamma_*>0$, $\psi_*\in AC(0, \, \infty)$, $\lim \limits
_{y\to\infty} \frac{y\psi'_*(y)} {\psi_*(y)}=0$. It will be proved
below (see Lemma \ref{obr}) that for sufficiently large $x$ the
function $y^{\gamma_*}\psi_*(y)$ is strictly increasing and the
equation
$$y^{\gamma_*}\psi_*(y)=x$$ has the unique solution $y(x)$.
Moreover,
$$
y(x)=x^{\frac{1}{\gamma_*}}\varphi_*(x),
$$
where $\varphi_*$ is an absolutely continuous function and $\lim
\limits _{x\to\infty} \frac{x\varphi_*'(x)} {\varphi_*(x)}=0$. We
denote the function $\varphi_*$ by \label{phi_lam}$\varphi
_{\gamma_*,\psi_*}$.
\begin{Trm}
\label{main_sobol} There exists $n_0=n_0(\mathfrak{Z})$ such that
for any $n\ge n_0$ the following assertions hold.
\begin{enumerate}
\item Suppose that $\theta>0$ and the condition (\ref{beta}), a) holds.
\begin{itemize}
\item Let $p\ge q$ or $p< q$, $\hat q\le 2$.
We set
\begin{align}
\label{case1_theta_j_0} \theta_1=\frac{\delta}{d}-\left(\frac
1q-\frac 1p\right)_+, \quad  \theta_2=\frac{\delta
-\beta}{\theta}-\left(\frac 1q-\frac 1p\right)_+,
\end{align}
\begin{align}
\label{case1_sigma_j_0} \sigma_1(n)=1, \quad
\sigma_2(n)=\Psi(n^{-1/\theta} \varphi_{\theta,
\psi_\Lambda}^{-1}(n) )\varphi_{\theta,
\psi_\Lambda}^{\beta-\delta}(n).
\end{align}
Let $\theta_1\ne \theta_2$, $j_*\in \{1, \, 2\}$,
$$
\theta_{j_*}=\min\{\theta_1, \, \theta_2\}.
$$
Then
$$
\vartheta_n(W^r_{p,g}(\Omega), \, L_{q,v}(\Omega))
\underset{\mathfrak{Z}_*}{\asymp} n^{-\theta_{j_*}}\sigma_{j_*}(n).
$$
\item Let $p<q$, $\hat q>2$. Denote
\begin{align}
\label{case1qg2_th12} \theta_1=\frac{\delta}{d} +
\min\left\{\frac 1p-\frac 1q, \, \frac 12-\frac{1}{\hat q}\right\},
\quad \theta_2=\frac{\hat q\delta}{2d},
\end{align}
\begin{align}
\label{case1qg2_th34}
\theta_3=\frac{\delta-\beta}{\theta} + \min\left\{\frac 1p-\frac 1q,
\, \frac 12-\frac{1}{\hat q}\right\}, \quad \theta_4 = \frac{\hat q(\delta
-\beta)}{2\theta},
\end{align}
\begin{align}
\label{case1def_sigma_qg2}
\begin{array}{c}
\displaystyle \sigma_1(n)=\sigma_2(n)=1, \quad \sigma_3(n) =
\Psi(n^{-1/\theta} \varphi_{\theta, \psi_\Lambda}^{-1}(n)
)\varphi_{\theta, \psi_\Lambda}^{\beta-\delta}(n),\\
\sigma_4(n)=\sigma_3(n^{\hat q/2}).
\end{array}
\end{align}
Suppose that there exists $j_*\in \{1, \, 2, \, 3, \, 4\}$ such
that
\begin{align}
\label{thj_min_case1}
\theta_{j_*}<\min_{j\ne j_*} \theta_j.
\end{align}
Then
$$
\vartheta_n(W^r_{p,g}(\Omega), \, L_{q,v}(\Omega))
\underset{\mathfrak{Z}_*}{\asymp} n^{-\theta_{j_*}}\sigma_{j_*}(n).
$$
\end{itemize}

\item Suppose that $\theta>0$ and the condition (\ref{beta}), b) holds.
Then
$$
\vartheta_n(W^r_{p,g}(\Omega), \, L_{q,v}(\Omega))
\underset{\mathfrak{Z}_*}{\asymp} (\log
n)^{-\alpha+(1-\gamma)\left(\frac 1q-\frac 1p\right)_+} \rho(\log
n)\tau^{-\left(\frac 1q-\frac 1p\right)_+}(\log n).
$$
\item Suppose that $\theta=0$ and the condition (\ref{beta}), b)
holds. Denote $\tau^{-1}(x)=\frac{1}{\tau(x)}$.
\begin{itemize}
\item Let $p\ge q$ or $p<q$, $\hat q\le 2$. We set
\begin{align}
\label{case3_pleq_theta_j} \theta _1=\frac{\delta}{d} -\left(\frac
1q-\frac 1p\right)_+, \quad \theta
_2=\frac{\alpha}{1-\gamma}-\left(\frac 1q-\frac 1p\right)_+,
\end{align}
\begin{align}
\label{case3_pleq_sigma} \sigma_1(n)=1, \quad
\sigma_2(n)=\rho\left(n^{\frac{1}{1-\gamma}} \varphi
_{1-\gamma,\tau^{-1}}(n)\right)
\varphi_{1-\gamma,\tau^{-1}}^{-\alpha}(n).
\end{align}
Let $\theta_1\ne \theta_2$, $j_*\in \{1, \, 2\}$,
$$
\theta_{j_*}=\min\{\theta_1, \, \theta_2\}.
$$
Then
$$
\vartheta_n(W^r_{p,g}(\Omega), \, L_{q,v}(\Omega))
\underset{\mathfrak{Z}_*}{\asymp} n^{-\theta_{j_*}}\sigma _{j_*}(n).
$$
\item Let $p<q$ and $\hat q>2$. Denote
\begin{align}
\label{case3_qg2_th12}
\theta_1=\frac{\delta}{d} +
\min\left\{\frac 1p-\frac 1q, \, \frac 12-\frac{1}{\hat q}\right\},
\quad \theta_2=\frac{\hat q\delta}{2d},
\end{align}
\begin{align}
\label{case3_qg2_th34}
\theta_3=\frac{\alpha}{1-\gamma} +\min\left\{\frac 1p-\frac 1q, \,
\frac 12-\frac{1}{\hat q}\right\}, \quad
\theta_4=\frac{\hat q\alpha}{2(1-\gamma)},
\end{align}
\begin{align}
\label{case3_qg2_sigma}
\begin{array}{c}
\displaystyle \sigma_1(n)=\sigma_2(n)=1, \quad \sigma_3(n)=
\rho\left(n^{\frac{1}{1-\gamma}}\varphi_{1-\gamma,\tau^{-1}}(n)\right)
\varphi_{1-\gamma,\tau^{-1}}^{-\alpha}(n), \\ \sigma_4(n)=
\sigma_3(n^{\hat q/2}).
\end{array}
\end{align}
Suppose that there exists $j_*\in \{1, \, 2, \, 3, \, 4\}$ such
that
\begin{align}
\label{case3_qg2_ineq}
\theta_{j_*}<\min _{j\ne j_*}\theta_j.
\end{align}
Then
$$
\vartheta_n(W^r_{p,g}(\Omega), \, L_{q,v}(\Omega))
\underset{\mathfrak{Z}_*}{\asymp} n^{-\theta_{j_*}}\sigma _{j_*}(n).
$$
\end{itemize}
\item Suppose that $\theta=0$ and the condition (\ref{beta}), a) holds.
\begin{itemize}
\item If $p\ge q$ or $p<q$, $\hat q\le 2$, then
$$
\vartheta_n(W^r_{p,g}(\Omega), \, L_{q,v}(\Omega)) \asymp
n^{-\frac{\delta}{d}+\left(\frac 1q-\frac 1p\right)_+}.
$$
\item Let $p<q$, $\hat q>2$, $\theta_1=\frac{\delta}{d} +
\min\left\{\frac 1p-\frac 1q, \, \frac 12-\frac{1}{\hat
q}\right\}$, $\theta_2=\frac{\hat q\delta}{2d}$, $\theta_1\ne
\theta_2$. Then
$$
\vartheta_n(W^r_{p,g}(\Omega), \, L_{q,v}(\Omega))
\underset{\mathfrak{Z}_*}{\asymp} n^{-\min\{\theta_1, \,
\theta_2\}}.
$$
\end{itemize}

\end{enumerate}
\end{Trm}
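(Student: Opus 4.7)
The strategy is to exploit the John condition to build a Whitney-type hierarchical decomposition of $\Omega$ indexed by a rooted tree $\mathcal{T}$, and then to reduce the continuous approximation problem to bounding widths of a direct sum of finite-dimensional balls via standard rescaling. Using Definition \ref{fca}, I would first cover $\Omega$ by cells $\{\Omega_\xi\}_{\xi\in\mathcal{T}}$ organized so that each cell at generation $k$ has diameter $\asymp 2^{-k}$ and lies at distance $\asymp 2^{-k}$ from the $h$-set $\Gamma$; the parent--child relation is inherited from the curves $\gamma_x$ whose tubes of radius $at$ lie in $\Omega$. The $h$-set hypothesis controls the cardinality of each generation by $\#\{\xi : \mathrm{dist}_{|\cdot|}(\Omega_\xi,\Gamma)\asymp 2^{-k}\}\asymp 2^{kd}h(2^{-k})\asymp 2^{k(d-\theta)}\Lambda(2^{-k})$, while Remark \ref{rem2} guarantees that the root cell near $x_*$ has diameter comparable with $R$.

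On each cell the weights $g$ and $v$ are essentially constant with values $\varphi_g(2^{-k})$ and $\varphi_v(2^{-k})$, and the restriction of $W^r_{p,g}(\Omega)$ is, after affine rescaling, a standard Sobolev ball. Polynomial interpolation of degree $<r$ yields local approximation errors of the classical form, and gluing the local approximants along $\mathcal{T}$ reduces the theorem to estimating widths of an object of the shape
$$
\bigoplus_{k}\bigoplus_{\xi\in\mathcal{T}_k} A_k\,B_p^{l_{r,d}}\ \text{ in the weighted }l_q\text{-sum},
$$
with $A_k$ combining the diameter factor $2^{-k\delta}$ with the weight ratio $\varphi_g(2^{-k})\varphi_v(2^{-k})$; condition (\ref{muck}) ensures summability of the target norm and (\ref{beta}) controls the growth of $A_k\cdot(\#\mathcal{T}_k)^{1/p-1/q}$. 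The upper bounds are then obtained by distributing the budget $n$ among the levels and applying the sharp Kashin/Gluskin/Garnaev--Gluskin bounds for $d_n(B_p^N,l_q^N)$, $\lambda_n(B_p^N,l_q^N)$ and $d^n(B_p^N,l_q^N)$; the lower bounds come from restricting to a single carefully selected level $k_*$ and embedding disjointly supported bumps to invoke the matching lower bounds. The exponents $\theta_1,\theta_2$ in items 1 and 3 reflect, respectively, the two competing critical scales: the ``bulk'' scale governed by the dimension $d$ and the ``boundary'' scale governed by the $h$-set dimension $\theta$ (or $1-\gamma$ when $\theta=0$); in the regime $p<q$, $\hat q>2$ two further exponents $\theta_2,\theta_4$ appear from the phase transition in the widths of $B_p^N$ in $l_q^N$.

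The main obstacle will be twofold. First, verifying that the discretization is faithful requires a careful comparison between continuous and discrete widths on a tree, using both the boundedness of Sobolev extensions on John subdomains and a matching lower bound that does not lose on the tree branching; this is where Definition \ref{fca} is used most heavily. Second, in the regime $p<q$, $\hat q>2$ of item 1 and item 3 one must identify, for each choice of $j_*$ satisfying the strict inequality (\ref{thj_min_case1}) or (\ref{case3_qg2_ineq}), the precise level $k_*=k_*(n)$ at which the dominant contribution occurs, and then show that the constructed approximation operator is optimal while the lower bound test functions are concentrated at that same level. A subsidiary but pervasive technical difficulty is tracking the slowly varying factors $\Psi_g,\Psi_v,\tau,\rho_g,\rho_v$ through the sums and inversions: here Remark \ref{ste} and the inverse function $\varphi_{\gamma_*,\psi_*}$ supplied by Lemma \ref{obr} are the workhorses, together with the fact that (\ref{yty}), (\ref{psi_cond}), (\ref{ll}) let one pull these factors through finite sums with constants depending only on $\mathfrak{Z}$. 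The borderline equality case (\ref{beta}) b) in item 2 is precisely where the bulk and boundary critical scales coincide; the logarithmic factor $(\log n)^{-\alpha+(1-\gamma)(1/q-1/p)_+}\rho(\log n)\tau^{-(1/q-1/p)_+}(\log n)$ arises from summing a slowly varying series over all generations $k\lesssim \log n$, and condition (\ref{g0ag}) is exactly what is needed to make this sum converge (or diverge appropriately) at the required rate.
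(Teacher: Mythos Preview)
Your overall architecture --- Whitney-type tree decomposition, reduction to finite-dimensional widths via Gluskin/Kashin, lower bounds from bump functions at a critical level --- matches the paper's. Two concrete points need repair, however.

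First, the level cardinality you write is wrong: from the $h$-set hypothesis one gets
\[
\#\{\xi:\ \mathrm{dist}(\Omega_\xi,\Gamma)\asymp 2^{-k}\}\ \underset{\mathfrak{Z}}{\lesssim}\ \frac{1}{h(2^{-k})}\ \asymp\ \frac{2^{k\theta}}{\Lambda(2^{-k})},
\]
not $2^{kd}h(2^{-k})=2^{k(d-\theta)}\Lambda(2^{-k})$. (Cover $\Gamma$ by $\asymp 1/h(2^{-k})$ balls of radius $2^{-k}$; each produces $O(1)$ Whitney cells at that scale.) This is exactly what makes $\gamma_*=\theta$, $\beta_*=1/\theta$ and hence the exponent $(\delta-\beta)/\theta$ in $\theta_2,\theta_3,\theta_4$; with your count you would get $d-\theta$ in place of $\theta$ and the stated orders would not come out.

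Second, the sentence ``gluing the local approximants along $\mathcal T$ reduces the theorem to estimating widths of $\bigoplus_k\bigoplus_\xi A_kB_p^{l_{r,d}}$'' hides the real analytic step. The paper does not reduce to a bare direct sum; it first proves that for \emph{every} subtree $\mathcal D$ rooted at level $j_0$ there is a single polynomial $P$ with $\|f-Pf\|_{L_{q,v}(\Omega_{\mathcal D})}\lesssim C(j_0)\|\nabla^r f/g\|_{L_p(\Omega_{\mathcal D})}$ (Theorem~\ref{appr}), and this is obtained from a two-weighted Hardy-type inequality on trees (Theorem~\ref{two_weight_hardy_tree}). That subtree estimate is precisely Assumption~\ref{sup1} of the abstract width theorem (Theorem~\ref{main_abstr_th}), and without it the telescoping/budget-allocation argument does not close: local polynomial interpolation on a single cell does not by itself control the error over a whole subtree of $\mathcal T$, which is what is needed in Steps~2--4 of the proof. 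In the borderline case (\ref{beta}),~b) the constant $C(j_0)$ already carries the extra $j_0^{-\alpha+(1/q-1/p)_+}\rho(j_0)$, and this is where (\ref{g0ag}) enters --- it is a hypothesis of the tree Hardy inequality, not merely a summability check at the end. So your plan is the right one, but you must insert the tree Hardy inequality (or an equivalent device) before the discretization, and correct the level count.
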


Notice that from the equivalence of norms on $\R^d$ and from the
definition of weights $g$ and $v$ it follows that it is sufficient
to consider the norm $|x|=|(x_1, \, \dots, \, x_d)|=\max _{1\le
i\le d}|x_i|$ for $x\in \R^d$.

\section{Notation}
In what follows $\overline{A}$ (${\rm int}\, A$, $\partial A$,
${\rm card}\,A$, respectively) be, respectively, the closure
(interior, boundary, cardinality) of $A$. If a set $A$ is
contained in some subspace $L\subset \R^d$ of dimension  $(d-1)$,
then we denote by ${\rm int}_{d-1}A$ the interior of $A$ with
respect to the induced topology on the space $L$.  For a convex
set $A$ we denote by $\dim A$  the dimension of the affine span of
the set $A$.

Let $(\Omega, \, \Sigma, \, \mu)$ be a measure space. We say that
sets $A$, $B\subset \Omega$ do not overlap if $A\cap B$ is a
Lebesgue nullset.  Let $E$, $E_1, \, \dots, \, E_m\subset \Omega$
be measurable sets, $m\in \N\cup \{\infty\}$. We say that
$\{E_i\}_{i=1}^m$ is a partition of $E$ if the sets $E_i$ do not
overlap pairwise and the set $\left(\cup _{i=1}^m
E_i\right)\bigtriangleup E$ is a Lebesgue nullset.

Denote by $\chi_E(\cdot)$ the indicator function of a set $E$.

A set $A\subset \R^d$ is said to be a cube if there are $s_j<
t_j$, $1\le j\le d$, such that $t_j-s_j=t_1-s_1$ for any $j=1, \,
\dots , \, d$ and
$$
\prod _{j=1}^d (s_j, \, t_j)\subset A\subset\prod _{j=1}^d [s_j,
\, t_j].
$$

For every cube $K$ and $s\in \Z_+$, denote by $\Xi _s(K)$ the
partition of $K$ into $2^{sd}$ non-overlapping cubes of the same
size, $\Xi(K):=\bigcup_{s\in \Z_+} \Xi _s(K)$. Everywhere below,
we assume that these cubes are closed.

We recall some definitions from graph theory. Throughout, we
assume that the graphs have neither multiple edges nor loops.

Let ${\cal G}$ be a graph containing at most countable number of
vertices. We shall denote by ${\bf V}({\cal G})$ and by ${\bf
E}({\cal G})$ the set of vertices and the set of edges of ${\cal
G}$, respectively. Two vertices are called {\it adjacent} if there
is an edge between them. We shall identify pairs of adjacent
vertices with edges that connect them. Let $\xi_i\in {\bf V}({\cal
G})$, $1\le i\le n$. The sequence $(\xi_1, \, \dots, \, \xi_n)$ is
called a {\it path}, if the vertices $\xi_i$ and $\xi_{i+1}$ are
adjacent for any $i=1, \, \dots , \, n-1$. If all the vertices
$\xi_i$ are distinct, then such a path is called {\it simple}. A
path $(\xi_1, \, \dots, \, \xi_n)$ is called a {\it cycle}, if
$n\ge 4$, the path $(\xi_1, \, \dots, \, \xi_{n-1})$ is simple and
$\xi_1=\xi_n$. We say that a graph is {\it connected} if any two
vertices are connected by a finite path. A connected graph is a
{\it tree} if it has no cycles.

Let $({\cal T}, \, \xi_0)$  be a tree with a distinguished vertex
(or a root) $\xi_0$. We introduce a partial order on ${\bf
V}({\cal T})$ as follows: we say that $\xi'>\xi$ if there exists a
simple path $(\xi_0, \, \xi_1, \, \dots , \, \xi_n, \, \xi')$ such
that $\xi=\xi_k$ for some $k\in \overline{0, \, n}$. In this case,
we set $\rho_{{\cal T}}(\xi, \, \xi')=\rho_{{\cal T}}(\xi', \,
\xi) =n+1-k$. In addition, we denote $\rho_{{\cal T}}(\xi, \,
\xi)=0$. If $\xi'>\xi$ or $\xi'=\xi$, then we write $\xi'\ge \xi$.
This partial order on ${\cal T}$ induces a partial order on its
subtree.

For $j\in \Z_+$, $\xi\in {\bf V}({\cal T})$ we denote
$$
\label{v1v}{\bf V}_j(\xi):={\bf V}_j ^{{\cal T}}(\xi):=
\{\xi'\ge\xi:\; \rho_{{\cal T}}(\xi, \, \xi')=j\}.
$$
Given $\xi\in {\bf V}({\cal T})$, we denote by ${\cal
T}_\xi=({\cal T}_\xi, \, \xi)$ a subtree of ${\cal T}$ with the
set of vertices
\begin{align}
\label{vpvtvpv} \{\xi'\in {\bf V}({\cal T}):\xi'\ge \xi\}.
\end{align}

Let ${\cal G}$ be a subgraph in ${\cal T}$. Denote by ${\bf
V}_{\max} ({\cal G})$ and ${\bf V}_{\min}({\cal G})$ the sets of
maximal and minimal vertices in ${\cal G}$, respectively. Given a
function $f:{\bf V}({\cal G})\rightarrow \R$, we set
\begin{align}
\label{flpg} \|f\|_{l_p({\cal G})}=\left ( \sum \limits _{\xi \in
{\bf V} ({\cal G})}|f(\xi)|^p\right )^{1/p}.
\end{align}
Denote by $l_p({\cal G})$ the space of functions $f:{\bf V}({\cal
G})\rightarrow \R$ with finite norm $\|f\|_{l_p({\cal G})}$.

Let ${\bf W}\subset {\bf V}({\cal T})$. We say that ${\cal
G}\subset {\cal T}$ is a maximal subgraph on the set of vertices
${\bf W}$ if ${\bf V}({\cal G})={\bf W}$ and any two vertices
$\xi'$, $\xi''\in {\bf W}$ adjacent in ${\cal T}$ are also
adjacent in ${\cal G}$.

Let $\{{\cal T}_j\}_{j\in \N}$ be a family of subtrees in ${\cal
T}$ such that ${\bf V}({\cal T}_j)\cap {\bf V}({\cal T}_{j'})
=\varnothing$ for $j\ne j'$ and $\cup _{j\in \N} {\bf V}({\cal
T}_j) ={\bf V}({\cal T})$. Then $\{{\cal T}_j\} _{j\in \N}$ is
called a partition of the tree ${\cal T}$. Let $\xi_j$ be the
minimal vertex of ${\cal T}_j$. We say that the tree ${\cal T}_s$
succeeds the tree ${\cal T}_j$ (or ${\cal T}_j$ precedes the tree
${\cal T}_s$) if $\xi_j<\xi_s$ and $$\{\xi\in {\cal T}:\; \xi_j\le
\xi<\xi_s\} \subset {\bf V}({\cal T}_j).$$

\section{Preliminaries}
Let $\Delta$ be a cube with the side length $2^{-m}$, $m\in \Z$.
We denote ${\bf m}(\Delta)=m$. In particular, if $\Delta \in
\Xi\left(\left[-\frac 12, \, \frac 12\right]^d\right)$, then $\Delta
\in \Xi _{{\bf m}(\Delta)}\left(\left[-\frac 12, \, \frac
12\right]^d\right)$.

We shall need Whitney's covering theorem (see, e.g., \cite[p.
562]{leoni1}).
\begin{trma}
\label{whitney} Let $\Omega\subset \left[-\frac 12, \, \frac
12\right]^d$ be an open set. Then there exists a family of closed pairwise
non-overlapping cubes $\Theta(\Omega)=
\{\Delta_j\}_{j\in\N}\subset \Xi\left(\left[-\frac 12, \, \frac
12\right]^d\right)$ with the following properties:
\begin{enumerate}
\item $\Omega=\cup _{j\in \N}\Delta_j$;
\item ${\rm dist}\, (\Delta_j, \, \partial \Omega)
\underset{d}{\asymp} 2^{-{\bf m}(\Delta_j)}$;
\item if $\dim (\Delta_i\cap \Delta_j)=d-1$, then
$$
{\bf m}(\Delta_j)-2\le {\bf m}(\Delta_i)\le {\bf m}(\Delta_j)+2.
$$
\end{enumerate}
\end{trma}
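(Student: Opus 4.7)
The plan is to carry out the classical Whitney decomposition, adapted to the dyadic grid $\Xi\!\left(\left[-\frac 12, \, \frac 12\right]^d\right)$ used throughout the paper. Fix a dimensional constant $c_d>1$ (to be chosen later) and call a cube $\Delta\in \Xi\!\left(\left[-\frac 12, \, \frac 12\right]^d\right)$ \emph{admissible} if $\Delta\subset\Omega$ and
$$
{\rm dist}\,(\Delta,\,\partial\Omega)\ge c_d\cdot 2^{-{\bf m}(\Delta)}.
$$
I would then define $\Theta(\Omega)$ to consist of the maximal admissible cubes, i.e.\ those admissible $\Delta$ that are not strictly contained in any larger admissible cube. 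Since any two distinct dyadic cubes in $\Xi\!\left(\left[-\frac 12, \, \frac 12\right]^d\right)$ are either non-overlapping or one strictly contains the other, maximality automatically yields a pairwise non-overlapping family.

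For coverage, fix $x\in\Omega$ and let $r={\rm dist}\,(x,\partial\Omega)>0$. Choosing $m\in\Z$ with $c_d\cdot 2^{-m}\le r/2<c_d\cdot 2^{-m+1}$, the unique cube $\Delta\in\Xi_m\!\left(\left[-\frac 12, \, \frac 12\right]^d\right)$ containing $x$ has ${\rm diam}\,\Delta=\sqrt d\cdot 2^{-m}$, so ${\rm dist}\,(\Delta,\partial\Omega)\ge r-{\rm diam}\,\Delta\ge c_d\cdot 2^{-m}$ once $c_d$ is taken sufficiently large compared with $\sqrt d$. Thus $\Delta$ is admissible, and so either $\Delta$ or a strictly larger admissible ancestor lies in $\Theta(\Omega)$. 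This proves $\Omega=\cup_j\Delta_j$, which together with the preceding paragraph gives property~1.

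Property~2 follows by bounding ${\rm dist}\,(\Delta_j,\partial\Omega)$ from both sides. The lower bound is the admissibility condition. For the upper bound, a maximal admissible cube cannot be too deep inside $\Omega$: if ${\rm dist}\,(\Delta_j,\partial\Omega)$ were much larger than $2^{-{\bf m}(\Delta_j)}$, then the dyadic parent of $\Delta_j$, which has side length $2\cdot 2^{-{\bf m}(\Delta_j)}$ and lies at essentially the same distance from $\partial\Omega$, would itself be admissible, contradicting maximality. This pins ${\rm dist}\,(\Delta_j,\partial\Omega)$ between two dimensional multiples of $2^{-{\bf m}(\Delta_j)}$. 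For property~3, if $\dim(\Delta_i\cap\Delta_j)=d-1$, then
$$
\bigl|\,{\rm dist}\,(\Delta_i,\partial\Omega)-{\rm dist}\,(\Delta_j,\partial\Omega)\,\bigr|\le {\rm diam}\,\Delta_i+{\rm diam}\,\Delta_j,
$$
and feeding this into the two-sided bound from property~2 forces $|{\bf m}(\Delta_i)-{\bf m}(\Delta_j)|$ to be at most a fixed integer.

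The main obstacle is the quantitative calibration of the constant $c_d$: one must choose it large enough that the admissibility condition really does yield coverage (the first paragraph), yet tune it so that the integer bound obtained from the adjacency inequality in property~3 comes out to be exactly $2$ rather than a larger absolute constant. This is a routine but delicate numerical check; a suitable choice is $c_d\asymp\sqrt d$, which I would verify by plugging the extreme admissibility values into the inequality derived in the previous paragraph and using that sizes of dyadic cubes are integer powers of $2$.
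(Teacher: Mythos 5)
The paper does not give a proof of this statement; it quotes Whitney's covering theorem with a reference to Leoni's book, so there is no in-paper proof to compare against. Your proposal is the standard Whitney construction carried out on the dyadic grid $\Xi\!\left(\left[-\frac 12,\frac 12\right]^d\right)$, and it is correct. The calibration you defer does go through: taking, say, $c_d=2\sqrt d$, coverage works because $c_d\ge\sqrt d$, inadmissibility of the dyadic parent of a maximal cube $\Delta$ gives ${\rm dist}\,(\Delta,\partial\Omega)\le(2c_d+2\sqrt d)\,2^{-{\bf m}(\Delta)}$, and plugging this two-sided estimate into your adjacency inequality yields
$$
2^{|{\bf m}(\Delta_i)-{\bf m}(\Delta_j)|}\le\frac{2c_d+3\sqrt d}{c_d-\sqrt d}=7<8,
$$
hence $|{\bf m}(\Delta_i)-{\bf m}(\Delta_j)|\le 2$. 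Two small points a complete write-up should mention: every admissible $\Delta$ has ${\bf m}(\Delta)\ge 1$ (if ${\bf m}(\Delta)=0$ then $\Delta=\left[-\frac 12,\frac 12\right]^d$, which cannot be contained in the proper open subset $\Omega$), so the dyadic parent used in the maximality step always lies in $\Xi\!\left(\left[-\frac 12,\frac 12\right]^d\right)$; and in the coverage step the chosen $m$ is automatically $\ge 1$ because $r={\rm dist}\,(x,\partial\Omega)\le\sqrt d$ and $c_d\ge\sqrt d$, so the cube $\Delta\in\Xi_m$ is well-defined.
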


Let $p\ge q$, $c_1\ge c_2\ge \dots \ge c_\nu>0$, ${\bf c}= (c_1,
\, \dots, \, c_\nu)$, $c_{\nu+1}=0$,
$$
B^\nu_p({\bf c}) = \left\{(x_1, \, \dots, \, x_\nu)\in \R^\nu:\;
\left(\frac{x_1}{c_1}, \, \dots, \, \frac{x_\nu}{c_\nu}
\right)\in B_p^\nu\right\}.
$$
Pietsch \cite{pietsch1} and Stesin \cite{stesin}
proved that for $0\le n\le \nu$
\begin{align}
\label{diag_pietsh} \vartheta_n(B_p^\nu({\bf c}), \,
l_q^\nu)=\left\{
\begin{array}{l}\left(\sum \limits _{j=n+1} ^\nu
c_j^{\frac{pq}{p-q}}\right)^{\frac 1q-\frac 1p} \quad
\text{for}\quad p>q,
\\ c_{n+1} \quad \text{ for }p=q.\end{array}\right.
\end{align}
In particular,
\begin{align}
\label{width_pietsch_stesin} \vartheta_n(B_p^\nu, \, l_q^\nu)=
(\nu -n)^{\frac 1q-\frac 1p}, \; n\in \Z_+, \; \nu \in \N, \; n\le
\nu.
\end{align}

For $p<q$, we shall use the following order estimates of widths
$\vartheta_n(B_p^\nu, \, l_q^\nu)$ (see \cite{bib_kashin,
bib_gluskin}).

\begin{trma} \label{glus_trm} If $1<p< q<\infty$, then
\begin{align}
\label{gluskin} d_n(B_p^\nu, \, l_q^\nu)\underset{q,p}{\asymp}
\Phi(n, \, \nu, \, p, \, q),
\end{align}
\begin{align}
\label{gluskin_lin} \lambda _n(B_p^\nu, \,
l_q^\nu)\underset{q,p}{\asymp} \Psi(n, \, \nu, \, p, \, q),
\end{align}
\begin{align}
\label{gluskin_gelf} d^n(B_p^\nu, \,
l_q^\nu)\underset{q,p}{\asymp} \Phi(n, \, \nu, \, q', \, p'),
\end{align}
where
$$\Phi(n, \, \nu, \, p, \, q)=\left\{
\begin{array}{l} \min\bigl\{ 1, \, \left(\nu^{1/q}n^{-1/2}\right)
^{\left(\frac1p-\frac1q\right)/\left(\frac12-\frac1q\right)}\bigr\},
\;
2\le p< q< \infty, \\
\max\bigl\{\nu ^{\frac 1q-\frac 1p}, \, \min \bigl(1, \, \nu
^{\frac 1q}n^{-\frac 12}
\bigr)\bigl(1-\frac{n}{\nu}\bigr)^{1/2}\bigr\}, \; 1< p< 2< q< \infty , \\
\max\bigl\{\nu ^{\frac1q-\frac1p}, \,
\bigl(1-\frac{n}{\nu}\bigr)^{\left(\frac1q-
\frac1p\right)/\left(1-\frac{2}{p}\right)}\bigl\}, \; 1< p< q\le
2,\end{array}\right.$$
$$\Psi(n, \, \nu, \, p, \, q)=\left\{ \begin{array}{l}
\Phi(n, \, \nu, \, p, \, q),
\text{ if }q\le p', \\
\Phi(n, \, \nu, \, q', \, p'),\text{ if }p'<q.\end{array}\right.
$$
\end{trma}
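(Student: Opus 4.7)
\textbf{Proof sketch for Theorem \ref{main_sobol}.} The plan is to reduce the continuous width problem on $(W^r_{p,g}(\Omega),L_{q,v}(\Omega))$ to a discrete, tree-indexed width problem, and then to apply Theorem \ref{whitney}, Theorem \ref{glus_trm} and the Pietsch--Stesin formula (\ref{diag_pietsh}). First I would apply Whitney's theorem to $\Omega$ and build a rooted tree $({\cal T},\xi_0)$ whose vertices are the Whitney cubes $\Delta_j\in\Theta(\Omega)$, rooted at a cube containing the distinguished John point $x_*$, with edges connecting neighbouring cubes along the John curves $\gamma_x$. On each $\Delta_j$ the weights are almost constant: by (\ref{ghi_g0}), (\ref{psi_cond}) and property~2 of Theorem~\ref{whitney}, $g\asymp \varphi_g(2^{-{\bf m}(\Delta_j)})$ and $v\asymp \varphi_v(2^{-{\bf m}(\Delta_j)})$. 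Local Sobolev/Poincar\'e inequalities on $\Delta_j$ bound $\|f-P_jf\|_{L_q(\Delta_j)}$, where $P_jf$ is the polynomial projection of degree $<r$, by $2^{-{\bf m}(\Delta_j)\delta}\|\nabla^rf\|_{L_p(\Delta_j)}$, and the constant weights let us translate this into bounds involving $g,v$. The key bookkeeping is the count, for each scale~$m$, of cubes with ${\bf m}(\Delta_j)=m$: the $h$-set condition on $\Gamma$ together with the John geometry gives, modulo constants, $\#\{j:{\bf m}(\Delta_j)=m\}\asymp 2^{md}/h(2^{-m}) \asymp 2^{m(d-\theta)}/\Lambda(2^{-m})$, which is what drives the exponents in (\ref{case1_theta_j_0})--(\ref{case3_qg2_sigma}).

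Next I would aggregate the Whitney cubes into a partition $\{{\cal T}_j\}$ of ${\cal T}$ and, for each subtree ${\cal T}_j$, consider the finite-dimensional coordinate map
$$
f\mapsto \bigl(a_{j,\alpha}(f)\bigr)_{\alpha,j},\qquad a_{j,\alpha}(f)=\text{coefficients of }P_jf,
$$
which diagonalises the problem into a direct sum of weighted finite-dimensional $l_p\to l_q$ embeddings $B_p^{\nu_j}({\bf c}_j)\hookrightarrow l_q^{\nu_j}$. The sizes $\nu_j$ and the weights $c_{j,\alpha}$ are read off from the $h$-set count above and from $g$, $v$; the parameters $\theta_i,\sigma_i$ in the statement are exactly the outcomes of balancing these blockwise widths. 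The upper estimate is then obtained by choosing the splitting $\{{\cal T}_j\}$ so as to equalise the per-block contributions, applying (\ref{diag_pietsh}) when $p\ge q$ and Theorem~\ref{glus_trm} when $p<q$ to each block, and distributing the approximation budget $n$ among blocks; the logarithmic factors in cases 2 and~3 arise from the critical balance (\ref{beta})\,b) where $\Lambda$ and $\rho$ in (\ref{phi_g}) enter through the inverse function $\varphi_{\gamma_*,\psi_*}$ coming from the remark after Lemma~\ref{obr}.

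For the lower bound I would select, inside a suitably chosen antichain of Whitney cubes clustered near $\Gamma$, a family of disjoint bump functions $\{f_{j,\alpha}\}$ supported in disjoint subcubes of $\Delta_j$, each of unit $W^r_{p,g}$-norm up to constants, whose $L_{q,v}$-images are almost orthogonal and have prescribed norms $\asymp c_{j,\alpha}$. This embeds a weighted finite-dimensional $B_p^\nu({\bf c})$ isomorphically into $W^r_{p,g}(\Omega)\to L_{q,v}(\Omega)$, and the matching lower bound follows again from (\ref{diag_pietsh}) and Theorem~\ref{glus_trm} applied to the block that realises $\min_j\theta_j$ (which explains why the strict inequalities $\theta_1\ne\theta_2$ and (\ref{thj_min_case1}), (\ref{case3_qg2_ineq}) are imposed: they guarantee that a single block dominates and no extra logarithmic interactions appear). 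Case~4 with $\theta=0$ and (\ref{beta})\,a) is then a simplification in which the $h$-set effectively disappears and one recovers the classical Besov-type order $n^{-\delta/d+(1/q-1/p)_+}$, by direct comparison with $d_n(W^r_p([0,1]^d),L_q([0,1]^d))$.

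The main obstacle will be the construction, on a John domain with only an $h$-set boundary singularity, of a single tree partition $\{{\cal T}_j\}$ that simultaneously matches the desired upper and lower bounds across all four parameter regimes --- in particular, controlling the interaction between the $h$-set growth exponent $\theta$, the weight exponents $\beta_g,\beta_v$ and the slowly varying factors $\Lambda,\Psi_g,\Psi_v,\rho_g,\rho_v,\tau$ in the critical case (\ref{beta})\,b) and $p<q$, $\hat q>2$, where four competing exponents $\theta_1,\dots,\theta_4$ appear and only the uniqueness of the minimiser lets one close the gap between the upper and lower estimates. All other steps reduce, after the Whitney/tree reduction, to the finite-dimensional widths already recorded in (\ref{diag_pietsh}) and Theorem~\ref{glus_trm}.
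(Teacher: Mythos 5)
You have proved the wrong statement. The theorem you were asked about is Theorem~\ref{glus_trm}, the classical Kashin--Gluskin result giving the exact orders of the Kolmogorov, linear and Gelfand widths $d_n(B_p^\nu,\,l_q^\nu)$, $\lambda_n(B_p^\nu,\,l_q^\nu)$, $d^n(B_p^\nu,\,l_q^\nu)$ for $1<p<q<\infty$. In the paper this is a preliminary fact stated without proof and attributed to \cite{bib_kashin} and \cite{bib_gluskin}; it is one of the finite-dimensional ingredients that the rest of the paper builds on. Your proposal is instead a proof sketch of Theorem~\ref{main_sobol}, the paper's main result on widths of weighted Sobolev classes on a John domain, and indeed it explicitly \emph{invokes} Theorem~\ref{glus_trm} together with the Pietsch--Stesin formula (\ref{diag_pietsh}) as black boxes. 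So not only did you address a different statement, the argument you gave is circular with respect to the actual target: it presupposes the very theorem you were asked to prove.

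A correct attempt at Theorem~\ref{glus_trm} would need to work entirely in the finite-dimensional setting: the upper bounds in the range $2\le p<q<\infty$ come from random-matrix or Kashin-type constructions producing nearly Euclidean sections of $B_p^\nu$, the case $1<p<2<q<\infty$ interpolates between the $p=2$ estimate and the trivial $\nu^{1/q-1/p}$ bound, and the case $1<p<q\le 2$ follows from duality and the $\bigl(1-n/\nu\bigr)$-type estimates; the lower bounds use volume comparisons and duality $d^n(B_p^\nu,l_q^\nu)=d_n(B_{q'}^\nu,l_{p'}^\nu)$ together with the formula $\lambda_n=\max\{d_n,d^n\}$ in the symmetric setting, which is exactly what the dichotomy $q\le p'$ versus $p'<q$ in the definition of $\Psi$ encodes. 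None of this appears in your sketch.
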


The following theorem is proved in \cite{sobol38}; see also
\cite[p. 566]{leoni1}  and \cite[p. 51]{mazya1}.
\begin{trma}
\label{adams_etc} Let $1<p<q<\infty$, $d\in \N$, $r>0$, $\frac
rd+\frac{1}{q}-\frac{1}{p}=0$. Then the operator
$$
Tf(x)=\int \limits _{\R^d} f(y)|x-y|^{r-d}\, dy
$$
is bounded from $L_p(\R^d)$ in $L_q(\R^d)$.
\end{trma}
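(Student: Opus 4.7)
The plan is to prove this Hardy--Littlewood--Sobolev inequality via Hedberg's pointwise method: bound $Tf(x)$ pointwise by a combination of the Hardy--Littlewood maximal function $Mf$ and $\|f\|_p$, then integrate and invoke the maximal inequality. Since $|Tf|\le T|f|$, I may assume $f\ge 0$ throughout.

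Fix $x\in \R^d$ and a threshold $\delta>0$, and split
$$
Tf(x) = \int_{|y-x|<\delta} f(y)|x-y|^{r-d}\,dy + \int_{|y-x|\ge \delta} f(y)|x-y|^{r-d}\,dy =: I_1 + I_2.
$$
For $I_1$, I would decompose the ball into dyadic annuli $\{2^{-k-1}\delta \le |y-x| < 2^{-k}\delta\}$ for $k\ge 0$, bound the kernel on each by $(2^{-k-1}\delta)^{r-d}$, use the maximal function estimate $\int_{|y-x|<2^{-k}\delta}f\,dy \le C(2^{-k}\delta)^d Mf(x)$, and sum the resulting geometric series (which converges because $r>0$) to obtain $I_1\le C_1\delta^r Mf(x)$. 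For $I_2$, H\"older's inequality with exponents $p,p'$ gives
$$
I_2 \le \|f\|_p \Big(\int_{|y-x|\ge \delta}|x-y|^{(r-d)p'}\,dy\Big)^{1/p'};
$$
the scaling identity $r/d = 1/p - 1/q$ yields $(d-r)p' = d + dp'/q > d$, so the integral converges and equals a constant multiple of $\delta^{(r-d)p'+d}$, producing $I_2\le C_2\delta^{r-d/p}\|f\|_p$.

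Balancing the two bounds by choosing $\delta \asymp (\|f\|_p/Mf(x))^{p/d}$ and using the identity $rp/d = 1 - p/q$ yields the pointwise estimate
$$
Tf(x) \le C\,\|f\|_p^{1-p/q}\, Mf(x)^{p/q}.
$$
Raising to the $q$-th power, integrating over $\R^d$, and then applying the Hardy--Littlewood maximal inequality $\|Mf\|_p \le C_p\|f\|_p$ gives $\|Tf\|_q \le C\|f\|_p$. The one delicate ingredient is the maximal inequality, which genuinely requires $p>1$; this is exactly the hypothesis of the theorem, so Hedberg's argument applies directly. The excluded endpoint $p=1$ would yield only a weak-type $(1,q)$ bound and would require Marcinkiewicz interpolation, but that case does not arise here.
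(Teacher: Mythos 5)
The paper does not prove Theorem~\ref{adams_etc}; it merely cites it (Sobolev's 1938 paper and the textbooks of Leoni and Maz'ya), so there is no internal proof to compare against. Your self-contained argument via Hedberg's pointwise inequality is correct and complete: the dyadic-annulus bound for $I_1$ uses $r>0$ for convergence and $r<d$ (which follows from $r/d=1/p-1/q<1$) so that the kernel is decreasing in $|x-y|$; the exponent computation $(d-r)p' = d + dp'/q > d$ is right, giving a convergent tail integral with the stated scaling; the optimization $\delta\asymp(\|f\|_p/Mf(x))^{p/d}$ together with $rp/d=1-p/q$ produces $Tf(x)\lesssim \|f\|_p^{1-p/q}(Mf(x))^{p/q}$; and the final step correctly invokes the strong-type $(p,p)$ maximal inequality, available precisely because $p>1$. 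This is a genuinely different route from what the cited sources (especially Sobolev's original paper) do: Sobolev's argument is more combinatorial and does not go through the maximal function, and other classical treatments use interpolation or rearrangements. The Hedberg approach you chose is shorter, entirely elementary once the maximal theorem is granted, and makes the scaling balance transparent; the trade-off is that it needs $p>1$ from the outset, whereas the interpolation approach also yields the weak-type endpoint at $p=1$ — but, as you correctly observe, that endpoint is excluded by hypothesis here, so nothing is lost.
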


\begin{trma}
\label{sob_dn} {\rm (see, e.g., \cite{birm, de_vore_sharpley,
bib_kashin, bibl6}).} Let $r\in \N$, $1< p\le \infty$, $1\le q<
\infty$, $\displaystyle \delta:=r +\frac dq-\frac dp>0$. Set
\begin{align}
\label{sob_dn_theta_pqrd}
\theta _{p,q,r,d}=\left\{\begin{array}{l}
\frac{\delta}{d}-\left(\frac 1q-\frac 1p\right)_+,
\quad \quad\quad\mbox{ if }\ p\ge q\quad
\text{or}\quad p<q, \; \hat q\le 2, \\
\min \bigl\{\frac{\delta}{d}+\min\bigl\{\frac 12-\frac{1}{\hat q},
\, \frac 1p-\frac 1q\bigr\}, \, \frac{\hat q\delta}{2d}\bigr\},
\;\; \mbox{ if }\ p<q, \; \hat q>2.
\end{array}\right.
\end{align}
In addition, we suppose that
$\frac{\delta}{d}+\min\bigl\{\frac 12-\frac{1}{\hat
q}, \, \frac 1p-\frac 1q\bigr\}\ne\frac{\hat q\delta}{2d}$ for
$p<q$, $\hat q>2$. Then
$$
\vartheta_n(W^r_p([0, \, 1]^d), \, L_q([0, \, 1]^d))
\underset{r,d,p,q}{\asymp}n^{-\theta_{p,q,r,d}}.
$$
\end{trma}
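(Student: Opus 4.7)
The plan is to establish both the upper and the lower bound by reducing the width problem to a finite-dimensional one involving balls $B_p^\nu$ in $l_q^\nu$, and then invoking the Pietsch--Stesin formula (\ref{diag_pietsh}) together with Theorem \ref{glus_trm}. The bridge between the continuous problem and the discrete one is a piecewise polynomial (or equivalent Besov-type/wavelet) approximation on nested dyadic partitions of $[0,1]^d$; the different regimes in the definition of $\theta_{p,q,r,d}$ correspond to different ways of balancing the number of scales retained against the size of the coefficient vector on each scale.

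For the upper bound, I would fix an integer parameter $s$ and let $\Xi_s([0,1]^d)$ be the partition into $2^{sd}$ cubes of side $2^{-s}$. On each such cube a Whitney/Sobolev-type inequality yields approximation by polynomials of degree less than $r$ with error in $L_q$ bounded by $2^{-s\delta/d}$ times the $L_p$-norm of $\nabla^r f$ on the cube, after accounting for the scaling factor $2^{-s(\frac dq-\frac dp)}$. When $p\ge q$, one simply picks a single optimal scale $s$ with $2^{sd}\asymp n$, uses $l_p\hookrightarrow l_q$, and reads off $n^{-\delta/d+(1/q-1/p)_+}$. When $p<q$, the residual piece must itself be decomposed across all finer scales; on scale $s$ one encounters a set of coefficient vectors lying in an appropriately scaled $B_p^{2^{sd}}$, whose $l_q^{2^{sd}}$-width is controlled by Theorem \ref{glus_trm}. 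Distributing $n$ dimensions across scales with $n_s$ on scale $s$ and optimizing $\sum_s 2^{-s\delta/d+s(\frac 1p-\frac 1q)_+}\vartheta_{n_s}(B_p^{2^{sd}},l_q^{2^{sd}})$ yields the exponent $\theta_{p,q,r,d}$; when $\hat q>2$ the optimum is realized either by a ``few large scales'' regime (giving the $\delta/d+\min(\frac 12-\frac{1}{\hat q},\frac 1p-\frac 1q)$ term) or by a ``many small scales'' regime (giving $\hat q\delta/(2d)$), whence the outer minimum.

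For the lower bound, I would exhibit, for each scale $s$, a bump function $\varphi_0\in C_0^\infty$ supported in the unit cube with $\|\nabla^r\varphi_0\|_p\asymp 1$ and $\|\varphi_0\|_q\asymp 1$; its translates and rescalings $\varphi_{s,k}(x)=2^{-s(r-d/p)}\varphi_0(2^s x-k)$, $k$ indexing $\Xi_s([0,1]^d)$, have disjoint supports and satisfy $\|\sum_k a_k\varphi_{s,k}\|_{W^r_p}\asymp \|a\|_{l_p^{2^{sd}}}$ and $\|\sum_k a_k\varphi_{s,k}\|_{L_q}\asymp 2^{-s\delta}2^{s d(\frac 1p-\frac 1q)}\|a\|_{l_q^{2^{sd}}}$. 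Hence $W^r_p([0,1]^d)$ contains a rescaled copy of $B_p^{2^{sd}}$ whose image in $L_q$ is a rescaled $B_q^{2^{sd}}$-ball, which together with the monotonicity of widths and (\ref{gluskin})--(\ref{gluskin_gelf}) gives the matching lower bound after optimizing $s$ against $n$ in exactly the same way as in the upper bound. The non-degeneracy hypothesis $\frac{\delta}{d}+\min\{\frac 12-\frac{1}{\hat q},\frac 1p-\frac 1q\}\ne\frac{\hat q\delta}{2d}$ ensures that the optimization has a unique asymptotic regime and avoids the logarithmic transition terms.

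The main obstacle is the case $p<q$ with $\hat q>2$: here one must carry out the two-regime optimization cleanly, i.e., show that distributing $n$ among scales $s$ as $n_s\asymp 2^{sd}\cdot 2^{-(s-s_*)\eta}$ for an appropriate $\eta>0$ and $s_*$ the ``critical'' scale actually attains the infimum, and that neither concentrating all $n$ on the critical scale nor spreading it indefinitely can do better. The delicate point is matching the two different upper bounds of Theorem \ref{glus_trm} (the $\min(1,\nu^{1/q}n^{-1/2})^{...}$ regime vs.\ the plain $\nu^{1/q-1/p}$ regime) to produce the exact minimum structure in (\ref{sob_dn_theta_pqrd}); this is precisely where the lower and upper constructions must use the same partition of $n$ across scales so as to match.
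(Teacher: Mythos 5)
This theorem is not proved in the paper at all: it is quoted as known background from the cited references (Birman--Solomyak, DeVore--Sharpley--Riemenschneider, Kashin, Tikhomirov), so there is no internal proof to compare against. Your sketch follows the standard route those sources take — dyadic multiscale piecewise-polynomial approximation reducing the problem to widths of $B_p^\nu$ in $l_q^\nu$, then the Pietsch--Stesin formula for $p\ge q$ and the Kashin--Gluskin estimates (Theorem~\ref{glus_trm}) for $p<q$, with a cross-scale optimization that produces the two-regime minimum in~(\ref{sob_dn_theta_pqrd}). That is the right picture and is consistent with how the paper uses Theorem~\ref{sob_dn} (as an off-the-shelf building block inside the proof of Theorem~\ref{main_sobol}, via~(\ref{vrt_n_gtrs_1cube})).

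Two points worth tightening. First, there is a scaling slip in your lower-bound computation: with the normalization $\varphi_{s,k}(x)=2^{-s(r-d/p)}\varphi_0(2^sx-k)$, one gets $\|\nabla^r\varphi_{s,k}\|_p\asymp 1$ and $\|\varphi_{s,k}\|_{L_q}\asymp 2^{-s\delta}$, hence $\|\sum_k a_k\varphi_{s,k}\|_{L_q}\asymp 2^{-s\delta}\|a\|_{l_q^{2^{sd}}}$ — the extra factor $2^{sd(\frac1p-\frac1q)}$ you wrote should not be there (it would spoil the scale balance). Second, the lower and upper bounds are \emph{not} matched by ``using the same partition of $n$ across scales''. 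The lower bound argument is single-scale: for each $s$ you obtain $\vartheta_n(W^r_p,L_q)\gtrsim 2^{-s\delta}\vartheta_n(B_p^{2^{sd}},l_q^{2^{sd}})$ and then \emph{maximize} over $s$ (in the $\hat q>2$ case there are exactly two candidate scales, $2^{sd}\asymp n$ and $2^{sd}\asymp n^{\hat q/2}$, giving the two exponents in the $\min$). The upper bound requires \emph{minimizing} a sum over a geometrically weighted distribution of ranks across scales; that the two optimizations produce the same exponent, under the non-degeneracy hypothesis $\frac{\delta}{d}+\min\{\frac 12-\frac1{\hat q},\frac1p-\frac1q\}\ne\frac{\hat q\delta}{2d}$, is precisely the content of the cited theorems, and your argument should acknowledge this asymmetry rather than assert the two constructions share a common allocation.
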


The following lemma is proved in \cite{vas_bes}.
\begin{Lem}
\label{sum_lem} Let $\Phi:(0, \, +\infty) \rightarrow (0, \,
+\infty)$, $\rho:(0, \, +\infty) \rightarrow (0, \, +\infty)$ be
absolutely continuous functions and $\lim \limits_{t\to
+0}\frac{t\Phi'(t)}{\Phi(t)}=0$, $\lim \limits_{y\to
+\infty}\frac{y\rho'(y)}{\rho(y)}=0$. Then for any $\varepsilon
>0$
$$
t^{\varepsilon}\underset{\varepsilon,\Phi}{\lesssim}
\frac{\Phi(ty)}{\Phi(y)}\underset{\varepsilon,\Phi}{\lesssim}
t^{-\varepsilon}, \quad 0<y\le 1, \;\; 0<t\le 1,
$$
$$
t^{-\varepsilon} \underset{\varepsilon,\rho}{\lesssim}
\frac{\rho(ty)}{\rho(y)}\underset{\varepsilon,\rho}{\lesssim}
t^\varepsilon,\quad 1\le y<\infty, \;\; 1\le t<\infty.
$$
\end{Lem}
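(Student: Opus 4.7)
The plan is to reduce both claims to integrating the logarithmic derivative. Since $\Phi$ is absolutely continuous and positive, $\log\Phi$ is absolutely continuous on compact subintervals of $(0,+\infty)$, and
\[
\log\frac{\Phi(ty)}{\Phi(y)}=\int_{y}^{ty}\frac{\Phi'(s)}{\Phi(s)}\,ds=\int_{\log y}^{\log(ty)}\frac{s\Phi'(s)}{\Phi(s)}\,du,\qquad s=e^u.
\]
The new interval of integration has length $|\log t|$. Given $\varepsilon>0$, the hypothesis $\lim_{t\to+0}t\Phi'(t)/\Phi(t)=0$ produces $t_0=t_0(\varepsilon,\Phi)\in(0,1]$ with $|s\Phi'(s)/\Phi(s)|<\varepsilon$ for all $s\in(0,t_0]$.

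\textbf{First inequality.} Suppose $0<y\le t_0$ and $0<t\le 1$; then $[ty,y]\subset(0,t_0]$, so the integrand is pointwise bounded by $\varepsilon$, yielding $|\log(\Phi(ty)/\Phi(y))|\le\varepsilon|\log t|$, hence $t^{\varepsilon}\le \Phi(ty)/\Phi(y)\le t^{-\varepsilon}$ (using $0<t\le 1$). For the remaining range $y\in(t_0,1]$, I split cases. If $ty\ge t_0$, then both $y$ and $ty$ lie in the compact interval $[t_0,1]$, on which $\Phi$ is continuous and strictly positive, so $\Phi(ty)/\Phi(y)\asymp_{\varepsilon,\Phi}1$; and since $t\ge t_0$, the quantities $t^{\pm\varepsilon}$ are also $\asymp_{\varepsilon,\Phi}1$, and the asserted two-sided bound holds with constants depending on $\varepsilon$ and $\Phi$. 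If $ty<t_0<y$, factor
\[
\frac{\Phi(ty)}{\Phi(y)}=\frac{\Phi(ty)}{\Phi(t_0)}\cdot\frac{\Phi(t_0)}{\Phi(y)},
\]
apply the already-proved case to the first factor (with base point $t_0$ and ratio $ty/t_0\le 1$) to get bounds $(ty/t_0)^{\pm\varepsilon}=t^{\pm\varepsilon}(y/t_0)^{\pm\varepsilon}$, and absorb $(y/t_0)^{\pm\varepsilon}\in[1,t_0^{-\varepsilon}]$ together with the bounded factor $\Phi(t_0)/\Phi(y)$ into a constant depending only on $\varepsilon$ and $\Phi$.

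\textbf{Second inequality.} The argument is completely symmetric: the same identity with $\rho$ in place of $\Phi$ gives
\[
\log\frac{\rho(ty)}{\rho(y)}=\int_{\log y}^{\log(ty)}\frac{s\rho'(s)}{\rho(s)}\,du.
\]
By the assumption $\lim_{y\to\infty}y\rho'(y)/\rho(y)=0$, pick $y_0=y_0(\varepsilon,\rho)\ge 1$ with $|s\rho'(s)/\rho(s)|<\varepsilon$ for $s\ge y_0$. For $y\ge y_0$ and $t\ge 1$ the integrand is bounded by $\varepsilon$ and the integration interval has length $\log t$, whence $|\log(\rho(ty)/\rho(y))|\le\varepsilon\log t$, i.e. $t^{-\varepsilon}\le\rho(ty)/\rho(y)\le t^{\varepsilon}$. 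The range $y\in[1,y_0]$ is absorbed exactly as before: if $ty\le y_0$, both values lie in the compact set $[1,y_0]$ on which $\rho$ is bounded above and below, while $t\in[1,y_0]$ so $t^{\pm\varepsilon}\asymp_{\varepsilon,\rho}1$; if $ty>y_0>y$, split $\rho(ty)/\rho(y)=[\rho(ty)/\rho(y_0)]\cdot[\rho(y_0)/\rho(y)]$, apply the proved case to the first factor and absorb the rest.

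The only real ingredient is the change of variable $s=e^u$ that converts the hypothesis on the logarithmic derivative into a uniform bound on the integrand; the main obstacle is the bookkeeping needed to guarantee that the implicit constants depend only on $\varepsilon$ and on $\Phi$ (respectively $\rho$) and not on $y$ or $t$, and this is what forces the case split at the threshold $t_0$ (respectively $y_0$) together with the compactness argument on the bounded transition interval.
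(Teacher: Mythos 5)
The paper itself does not contain a proof of this lemma; it is cited as proved in \cite{vas_bes}, so there is no in-paper argument to compare against. Your proof is correct: the identity $\log\bigl(\Phi(ty)/\Phi(y)\bigr)=\int_{\log y}^{\log(ty)}\bigl.\tfrac{s\Phi'(s)}{\Phi(s)}\bigr|_{s=e^u}\,du$ is valid because $\Phi$ is locally absolutely continuous and bounded away from zero on compacts, the hypothesis bounds the integrand by $\varepsilon$ (a.e.) near $0$, the interval has length $|\log t|$, and the case split at the threshold $t_0$ together with the compactness/positivity of $\Phi$ on $[t_0,1]$ makes the constants uniform in $y$ and $t$; the argument for $\rho$ is the same near $+\infty$. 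This is the standard route for such Karamata-type bounds on slowly varying quantities, and is almost certainly the argument used in the cited reference.
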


\section{Widths of function classes on a set with \\
a tree-like structure: the general theorem}

Let $(\Omega, \, \Sigma, \, {\rm mes})$ be a measure space, let
$\hat\Theta$ be at most countable partition of $\Omega$ into
measurable subsets, let ${\cal A}$ be a tree such that
\begin{align}
\label{c_v1_a} \exists c_1\ge 1:\quad {\rm card}\, {\bf
V}_1(\xi)\le c_1, \quad \xi \in {\bf V}({\cal A}),
\end{align}
and let $\hat F:{\bf V}({\cal A}) \rightarrow \hat\Theta$ be a
bijective mapping.

Throughout we consider at most countable partitions.

Let $1\le p\le \infty$, $1\le q\le \infty$. We suppose that, for
any measurable subset $E\subset \Omega$, the following spaces are
defined:
\begin{itemize}
\item the space $X_p(E)$ with seminorm $\|\cdot\|_{X_p(E)}$,
\item the space $Y_q(E)$ with norm $\|\cdot\|_{Y_q(E)}$,
\end{itemize}
which all satisfy the following conditions:
\begin{enumerate}
\item $X_p(\Omega)\subset Y_q(\Omega)$;
\item $X_p(E)=\{f|_E:\; f\in X_p(\Omega)\}$, $Y_q(E)=\{f|_E:\; f\in
Y_q(\Omega)\}$;
\item if ${\rm mes}\, E=0$, then $\dim \, Y_q(E)=0$;
\item if $E\subset \Omega$, $E_j\subset \Omega$ ($j\in \N$)
are measurable sets, $E=\sqcup _{j\in \N} E_j$, then
\begin{align}
\label{f_xp} \|f\|_{X_p(E)}=\left\| \bigl\{
\|f|_{E_j}\|_{X_p(E_j)}\bigr\}_{j\in \N}\right\|_{l_p},\quad f\in
X_p(E),
\end{align}
\begin{align}
\label{f_yq} \|f\|_{Y_q(E)}=\left\| \bigl\{\|f|_{E_j}\|
_{Y_q(E_j)}\bigr\}_{j\in \N}\right\|_{l_q}, \quad f\in Y_q(E).
\end{align}
\end{enumerate}
Denote by $BX_p(\Omega)$ the unit ball of the space $X_p(\Omega)$.

Let ${\cal P}(\Omega)\subset X_p(\Omega)$ be a subspace
of the finite dimension $r_0$. For each measurable subset
$E\subset \Omega$ we write ${\cal P}(E)=\{P|_E:\; P\in {\cal
P}(\Omega)\}$. Let $G\subset \Omega$ be a measurable subset
and let $T$ be a partition of $G$. We set
\begin{align}
\label{st_omega} {\cal S}_{T}(\Omega)=\{f:\Omega\rightarrow \R:\,
f|_E\in {\cal P}(E), \; f|_{\Omega\backslash G}=0\}.
\end{align}
For any finite partition $T=\{E_j\}_{j=1}^n$ of the set $E$ and
for each function $f\in Y_q(\Omega)$ we put
$$
\|f\|_{p,q,T}=\left(\sum \limits _{j=1}^n \|f|_{E_j}\|_{Y_q(E_j)}
^{\sigma_{p,q}}\right)^{\frac{1}{\sigma_{p,q}}},
$$
where $\sigma_{p,q}=\min\{p, \, q\}$. Denote by $Y_{p,q,T}(E)$
the space $Y_q(E)$ with the norm $\|\cdot\|_{p,q,T}$. Notice that
$\|\cdot\| _{Y_q(E)}\le \|\cdot\|_{p,q,T}$.

For any subtree ${\cal A}'\subset {\cal A}$ we set
$\Omega _{{\cal A}'}=\cup _{\xi\in {\bf V}({\cal A}')} \hat
F(\xi)$.

\begin{Sup}
\label{sup1} There is a function $w_*:{\bf V}({\cal A})\rightarrow
(0, \, \infty)$ with the following property: for any subtree
${\cal A}'\subset {\cal A}$ rooted at $\hat\xi$ there exists a
linear continuous operator $P_{\Omega_{{\cal
A}'}}:Y_q(\Omega_{{\cal A}'})\rightarrow {\cal P}(\Omega_{{\cal
A}'})$ such that for any function $f\in X_p(\Omega_{{\cal A}'})$
\begin{align}
\label{f_pom_f} \|f-P_{\Omega_{{\cal A}'}}f\|_{Y_q(\Omega_{{\cal
A}'})}\le w_*(\hat\xi)\|f\| _{X_p(\Omega_{{\cal A}'})}.
\end{align}
\end{Sup}

\begin{Sup}
\label{sup2} There exist a function $\tilde w_*:{\bf V}({\cal
A})\rightarrow (0, \, \infty)$ and numbers $\delta_*>0$, $c_2\ge
1$ such that for each vertex $\xi\in {\bf V}({\cal A})$ and for
any $n\in \N$, $m\in \Z_+$ there is a partition $T_{m,n}(G)$ of
the set $G=\hat F(\xi)$ with the following properties:
\begin{enumerate}
\item ${\rm card}\, T_{m,n}(G)\le c_2\cdot 2^mn$.
\item For any $E\in T_{m,n}(G)$ there exists a linear continuous operator
$P_E:Y_q(\Omega)\rightarrow {\cal P}(E)$ such that for any
function $f\in X_p(E)$
\begin{align}
\label{fpef} \|f-P_Ef\|_{Y_q(E)}\le (2^mn)^{-\delta_*}\tilde w_*(\xi)
\|f\| _{X_p(E)}.
\end{align}
\item For any $E\in T_{m,n}(G)$
\begin{align}
\label{ceptm} {\rm card}\,\{E'\in T_{m\pm 1,n}(G):\, {\rm
mes}(E\cap E') >0\} \le c_2.
\end{align}
\end{enumerate}
\end{Sup}

\begin{Sup}
\label{sup3} There exist $k_*\in \N$, $\lambda_*\ge 0$,
\begin{align}
\label{mu_ge_lambda}
\mu_*\ge \lambda_*,
\end{align}
$\gamma_*>0$, absolutely continuous functions $u_*:(0, \, \infty)
\rightarrow (0, \, \infty)$ and $\psi_*:(0, \, \infty) \rightarrow
(0, \, \infty)$, $t_0\in \N$, a partition $\{{\cal
A}_{t,i}\}_{t\ge t_0, \, i\in \hat J_t}$ of the tree ${\cal A}$
such that $\lim \limits _{y\to \infty} \frac{yu_*'(y)}{u_*(y)}=0$,
$\lim \limits _{y\to \infty} \frac{y\psi_*'(y)}{\psi_*(y)}=0$,
\begin{align}
\label{w_s_2} c_3^{-1} 2^{-\lambda_*k_*t}u_*(2^{k_*t}) \le
w_*(\xi)\le c_3\cdot 2^{-\lambda_*k_*t}u_*(2^{k_*t}), \quad \xi
\in {\bf V}({\cal A}_{t,i}),
\end{align}
\begin{align}
\label{til_w_s_2} c_3^{-1} 2^{-\mu_*k_*t}u_*(2^{k_*t}) \le \tilde
w_*(\xi)\le c_3\cdot 2^{-\mu_*k_*t}u_*(2^{k_*t}), \quad \xi \in
{\bf V}({\cal A}_{t,i}),
\end{align}
\begin{align}
\label{nu_t_k} \nu_t:= \sum \limits _{i\in \hat J_t} {\rm card}\,
{\bf V}({\cal A}_{t,i})\le c_3\cdot 2^{k_*\gamma_*t}
\psi_*(2^{k_*t})=: c_3 \overline{\nu}_t,\quad t\ge t_0.
\end{align}
In addition, we assume that the following assertions hold.
\begin{enumerate}
\item If $p>q$, then
\begin{align}
\label{bipf4684gn} 2^{-\lambda_*k_*t} ({\rm card}\, \hat
J_t)^{\frac 1q-\frac 1p} \le c_3\cdot
2^{-\mu_*k_*t}\overline{\nu}_t^{\frac 1q-\frac 1p}.
\end{align}
\item Let $t$, $t'\in \Z_+$. Then
\begin{align}
\label{2l} 2^{-\lambda_*k_*t'}u_*(2^{k_*t'})\le c_3\cdot
2^{-\lambda_*k_*t}u_*(2^{k_*t}), \quad\text{if}\quad  t'\ge t,
\end{align}
\begin{align}
\label{2ll}
\begin{array}{c} 2^{-\mu_*k_*t'}u_*(2^{k_*t'})\left(2^{k_*\gamma_*t'}
\psi_*(2^{k_*t'})\right)^{\frac 1q-\frac 1p}\le \\ \le c_3\cdot
2^{-\mu_*k_*t}u_*(2^{k_*t})\left(2^{k_*\gamma_*t}
\psi_*(2^{k_*t})\right)^{\frac 1q-\frac 1p}, \quad\text{if}\quad
t'\ge t, \quad p>q. \end{array}
\end{align}

\item If the tree ${\cal A}_{t',i'}$ succeeds the tree ${\cal
A}_{t,i}$, then $t'=t+1$.
\end{enumerate}
\end{Sup}
\begin{Rem}
\label{rem_suc} If $\xi \in {\bf V}({\cal A}_{t,i})$, $\xi' \in
{\bf V}({\cal A}_{t',i'})$, $\xi'>\xi$, then $t'>t$.
\end{Rem}

\label{ati_label}Let us make some notations.
\begin{itemize}
\item $\hat \xi_{t,i}$ is the minimal vertex of the tree ${\cal
A}_{t,i}$.
\item $\Gamma _t$ is the maximal subgraph in
${\cal A}$ on the set of vertices $\cup _{i\in \hat J_t} {\bf
V}({\cal A}_{t,i})$, $t\ge t_0$; for $1\le t<t_0$ we put
$\Gamma_t=\varnothing$.
\item $G_t=\cup_{\xi\in {\bf V}(\Gamma_t)}\hat F(\xi)=\cup_{i\in \hat
J_t} \Omega_{{\cal A}_{t,i}}$.
\item $\tilde \Gamma_t$ is the maximal subgraph
on the set of vertices $\cup _{j\ge t} {\bf V}(\Gamma_j)$, $t\in
\N$.
\item $\tilde {\cal A}_{t,i}$ ($i\in \overline{J}_t$)
is the set of connected components of the graph $\tilde \Gamma _t$.
\item $\tilde U_{t,i}=\cup _{\xi\in {\bf V}(\tilde{\cal
A}_{t,i})}\hat F(\xi)$.
\item $\tilde U_t =\cup _{i\in \overline{J}_t} \tilde U_{t,i} =\cup
_{\xi\in {\bf V}(\tilde \Gamma_t)} \hat F(\xi)$.
\end{itemize}
We claim that if $t\ge t_0$, then
\begin{align}
\label{v_min_gt} {\bf V}_{\min}(\tilde \Gamma_t)= {\bf
V}_{\min}(\Gamma_t)=\{\hat \xi_{t,i}\}_{i\in \hat J_t}.
\end{align}
Indeed, let $t=t_0$, $i_0\in \hat J_{t_0}$. Then $\hat
\xi_{t_0,i_0}$ is the minimal vertex in the tree ${\cal A}$
(otherwise there is a tree ${\cal A}_{t',i'}$ that precedes ${\cal
A}_{t_0,i_0}$; by Assumption \ref{sup3}, $t'=t_0-1$, which leads
to a contradiction). Thus, $\hat J_{t_0}=\{i_0\}$ and ${\bf
V}_{\min}(\tilde \Gamma_{t_0})={\bf V}_{\min}({\cal A})=\{\hat
\xi_{t_0,i_0}\}={\bf V}_{\min}( \Gamma_{t_0})$.

Let $t>t_0$. Notice that ${\bf V}_{\min}(\Gamma_t)\subset \{\hat
\xi_{t,i}\}_{i\in \hat J_t}$. We claim that ${\bf
V}_{\min}(\tilde\Gamma_t)\subset \{\hat \xi_{t,i}\}_{i\in \hat
J_t}$. In fact, let $\xi\in {\bf V}_{\min}(\tilde\Gamma _t)$. Then
$\xi=\hat \xi_{t',i'}$ for some $t'\ge t$, $i'\in \hat J_{t'}$.
Since $t>t_0$, $\hat\xi_{t',i'}> \hat \xi_{t_0,i_0}$. Let the tree
${\cal A}_{t'',i''}$ precede the tree ${\cal A}_{t',i'}$. Then
$t''<t$. By Assumption \ref{sup3}, $t'=t''+1$. Therefore,
$t'<t+1$, i.e., $t'=t$. Let us prove that ${\bf V}_{\min}
(\Gamma_t) \supset \{\hat \xi_{t,i}\}_{i\in \hat J_t}$ and ${\bf
V}_{\min} (\tilde \Gamma_t) \supset \{\hat \xi_{t,i}\}_{i\in \hat
J_t}$. Indeed, since $\hat\xi_{t,i}$ is the minimal vertex of
${\cal A} _{t,i}$, we have $\hat\xi_{t,i} \in {\bf V}(\tilde\Gamma
_t)$. If $\xi<\hat \xi_{t,i}$, then $\xi\in {\bf V}({\cal
A}_{t',i'})$, $t'<t$ (see Remark \ref{rem_suc}). Hence,
$\hat\xi_{t,i} \in {\bf V}_{\min}(\tilde\Gamma _t)$ and
$\hat\xi_{t,i} \in {\bf V}_{\min}(\Gamma _t)$.

Thus, we may assume that
\begin{align}
\label{ovrl_it_eq_hat_it} \overline{J}_t=\hat J_t, \quad t\ge t_0.
\end{align}

\begin{Lem}
\label{obr} There exists $x_0\in (0, \, \infty)$ such that for any $x\ge
x_0$ the equation $y^{\gamma_*}\psi_*(y)=x$ has the unique solution
$y(x)$. Moreover, $y(x)=x^{\beta_*}\varphi_*(x)$, where
$\beta_*=\frac{1}{\gamma_*}$ and $\varphi_*$ is an absolutely
continuous function such that $\lim _{x\to +\infty}
\frac{x\varphi_*'(x)}{\varphi_*(x)}=0$.
\end{Lem}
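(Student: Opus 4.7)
The plan is to treat $f(y) := y^{\gamma_*}\psi_*(y)$ as a perturbation of the pure power $y^{\gamma_*}$ and invert it for large $y$. First I compute the logarithmic derivative
$$
\frac{y f'(y)}{f(y)} = \gamma_* + \frac{y\psi_*'(y)}{\psi_*(y)},
$$
which tends to $\gamma_* > 0$ by hypothesis. Hence there is $y_0$ such that $f'(y) > 0$ for a.e.\ $y \ge y_0$, so by integration $f$ is strictly increasing on $[y_0, \infty)$. Moreover, applying Lemma \ref{sum_lem} to $\rho = \psi_*$ (with $y=1$) gives $\psi_*(t) \underset{\varepsilon,\psi_*}{\gtrsim} t^{-\varepsilon}$ for $t \ge 1$; choosing $\varepsilon < \gamma_*$ yields $f(y) \to \infty$ as $y \to \infty$. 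Setting $x_0 = f(y_0)$, the map $f\colon[y_0,\infty) \to [x_0, \infty)$ is thus a continuous strictly increasing bijection, which provides the unique solution $y(x) := f^{-1}(x)$ on $[y_0,\infty)$.

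Next I establish absolute continuity of $y(x)$. Since $\psi_*$ and $y\mapsto y^{\gamma_*}$ are absolutely continuous on compacts in $(0,\infty)$, so is $f$; as an AC function $f$ has the Luzin N property, and combined with strict monotonicity this classically yields absolute continuity of $f^{-1}$ on every compact subinterval of $[x_0, \infty)$. Differentiating the identity $y(x)^{\gamma_*}\psi_*(y(x)) = x$ a.e.\ and rearranging gives
$$
\frac{x y'(x)}{y(x)} = \frac{1}{\gamma_* + \frac{y(x)\psi_*'(y(x))}{\psi_*(y(x))}},
$$
and since $y(x) \to \infty$ as $x \to \infty$, this yields $xy'(x)/y(x) \to \beta_* := 1/\gamma_*$.

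Finally, I set $\varphi_*(x) := y(x) x^{-\beta_*}$, which is positive and absolutely continuous on $[x_0,\infty)$ as a product of AC functions bounded away from zero on compacts. The logarithmic derivative computation
$$
\frac{x\varphi_*'(x)}{\varphi_*(x)} = \frac{xy'(x)}{y(x)} - \beta_* \underset{x\to\infty}{\longrightarrow} 0
$$
delivers the claimed property of $\varphi_*$, completing the proof.

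The step requiring the most care is the absolute continuity of $y(x)$: although classical, it relies on Luzin's N property of AC functions combined with strict monotonicity, rather than following from a simple inverse function theorem (because $f'$ need only exist a.e.\ and is not necessarily bounded away from zero). The remaining steps are routine logarithmic-derivative manipulations characteristic of regularly varying functions.
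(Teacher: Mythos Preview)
Your argument has a gap in the step establishing absolute continuity of $y(x)=f^{-1}(x)$. You claim that ``as an AC function $f$ has the Luzin N property, and combined with strict monotonicity this classically yields absolute continuity of $f^{-1}$.'' But Luzin N for $f$ (null sets map to null sets) is the wrong direction: by Banach--Zarecki what is needed is Luzin N for $f^{-1}$, i.e.\ that $f$ send sets of \emph{positive} measure to sets of positive measure. The implication you invoke is false in general: if $K\subset[0,1]$ is a nowhere-dense closed set of positive measure and $f(x)=\int_0^x \mathrm{dist}(t,K)\,dt$, then $f$ is AC and strictly increasing, yet $f'=0$ a.e.\ on $K$, so $|f(K)|\le\int_K|f'|=0$ and $f^{-1}$ fails Luzin N, hence is not AC.

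The repair is already implicit in what you computed. From $\frac{yf'(y)}{f(y)}\to\gamma_*$ you obtain not merely $f'>0$ a.e.\ but a \emph{local positive lower bound}: on any compact $[a,b]\subset[y_0,\infty)$ one has, for a.e.\ $y$, $f'(y)\ge \tfrac{\gamma_*}{2}\cdot\tfrac{f(y)}{y}\ge \tfrac{\gamma_*}{2}\cdot\tfrac{f(a)}{b}>0$, whence $f(z+h)-f(z)\ge c_{a,b}\,h$ and $f^{-1}$ is locally Lipschitz. This is exactly the route the paper takes. So your closing remark that ``$f'$ \ldots\ is not necessarily bounded away from zero'' is mistaken in this situation --- it is, locally, and that is the clean reason the inverse is AC. The remainder of your proof (existence/uniqueness via monotonicity and $f(y)\to\infty$, and the logarithmic-derivative computation giving $\frac{x\varphi_*'(x)}{\varphi_*(x)}\to 0$) agrees with the paper.
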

This lemma will be proved later.

Denote $\mathfrak{Z}_0=(p, \, q, \, r_0,\, w_*, \, \tilde w_*, \,
\delta_*, \, k_*, \, \lambda_*, \, \mu_*,\, \gamma_*, \, \psi_*,\,
u_*,\, c_1, \, c_2, \, c_3)$.
\begin{Trm}
\label{main_abstr_th} Suppose that Assumptions \ref{sup1},
\ref{sup2} and \ref{sup3} hold and $\delta_*>\left(\frac 1q-\frac
1p\right)_+$. Then there exists $n_0=n_0(\mathfrak{Z}_0)$ such
that for any $n\ge n_0$ the following estimates are true.
\begin{enumerate}
\item Let $\delta_*\ne \lambda_*\beta_*$.
\begin{itemize}
\item For $p\le q$, we set
$$
\sigma_*(n)=\left\{ \begin{array}{l} 1, \quad \text{if}\;
\delta_*<\lambda_*\beta_*, \\ u_*(n^{\beta_*} \varphi_*(n))
\varphi_*^{-\lambda_*}(n), \quad \text{if}\; \delta_* >
\lambda_*\beta_*.\end{array}\right.
$$
Then
\begin{align}
\label{vrth_n_pleq} \vartheta_n(BX_p(\Omega), \, Y_q(\Omega))
\underset{\mathfrak{Z}_0} {\lesssim} n^{-\min(\delta_*, \,
\lambda_*\beta_*)}\sigma_*(n).
\end{align}
\item For $p>q$, we set
$$
\sigma_*(n)=\left\{ \begin{array}{l} 1, \quad \text{if}\;
\delta_*<\mu_*\beta_*, \\ u_*(n^{\beta_*} \varphi_*(n))
\varphi_*^{-\mu_*}(n), \quad \text{if}\; \delta_* >
\mu_*\beta_*.\end{array}\right.
$$
Then
\begin{align}
\label{vrth_n_pgq} \vartheta_n(BX_p(\Omega), \, Y_q(\Omega))
\underset{\mathfrak{Z}_0} {\lesssim} n^{-\min(\delta_*, \,
\mu_*\beta_*)+\frac 1q-\frac 1p}\sigma_*(n).
\end{align}
\end{itemize}
\item Let $p<q$, $\hat q>2$. Set $\theta_1=\delta_*+\min \left\{ \frac 12-
\frac{1}{\hat q}, \, \frac 1p-\frac 1q\right\}$, $\theta_2
=\frac{\delta _*\hat q}{2}$, $\theta_3 =\lambda_*\beta_* +\min \left\{
\frac 12- \frac{1}{\hat q}, \, \frac 1p-\frac 1q\right\}$,
$\theta_4 =\frac{\lambda_*\beta_* \hat q}{2}$,
$\sigma_1(n)=\sigma_2(n)\equiv 1$,
$\sigma_3(n)=u_*(n^{\beta_*}\varphi_*(n))\varphi_*^{-\lambda_*}(n)$,
$\sigma_4(n)= \sigma_3(n^{\frac{\hat q}{2}})$. Suppose that there exists
$j_*\in \{1, \, 2, \, 3, \, 4\}$ such that $\theta_{j_*}<\min
_{j\ne j_*} \theta_j$. Then
$$
\vartheta_n(BX_p(\Omega), \, Y_q(\Omega))
\underset{\mathfrak{Z}_0} {\lesssim} n^{-\theta_{j_*}}
\sigma_{j_*}(n).
$$
\end{enumerate}
\end{Trm}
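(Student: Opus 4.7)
The plan is to construct, for each sufficiently large $n$, an approximating subspace (respectively operator) of rank $\le n$ realizing the claimed bound. Pick a threshold level $\tau \in \Z_+$, $\tau \ge t_0$, to be optimized later, and split $\Omega$ into a refinement zone $\bigsqcup_{t_0 \le t \le \tau} G_t$ and a tail zone $\tilde U_{\tau+1}$; these form a partition by Assumption \ref{sup3} and (\ref{ovrl_it_eq_hat_it}). On the tail, apply Assumption \ref{sup1} separately to each subtree $\tilde{\mathcal{A}}_{\tau+1,i}$, replacing $f|_{\tilde U_{\tau+1,i}}$ by $P_{\Omega_{\tilde{\mathcal{A}}_{\tau+1,i}}}f$; by (\ref{v_min_gt}) its root is $\hat\xi_{\tau+1,i}$, and (\ref{w_s_2}) bounds the piecewise error by $c_3 \cdot 2^{-\lambda_* k_*(\tau+1)} u_*(2^{k_*(\tau+1)}) \|f\|_{X_p(\tilde U_{\tau+1,i})}$. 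The tail consumes dimension $r_0 \cdot {\rm card}\, \hat J_{\tau+1}$, which by (\ref{nu_t_k}) is at most a constant multiple of $\overline{\nu}_{\tau+1}$.

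\textbf{Refinement zone.} For every $t_0 \le t \le \tau$ and every $\xi \in {\bf V}(\Gamma_t)$, invoke Assumption \ref{sup2} with parameters $m_t, n_t$ on $G = \hat F(\xi)$ to obtain a partition of cardinality $\le c_2 2^{m_t} n_t$ together with operators $P_E$ satisfying, via (\ref{til_w_s_2}),
\begin{equation*}
\|f - P_E f\|_{Y_q(E)} \le (2^{m_t} n_t)^{-\delta_*} \tilde w_*(\xi) \|f\|_{X_p(E)} \lesssim (2^{m_t} n_t)^{-\delta_*} 2^{-\mu_* k_* t} u_*(2^{k_* t}) \|f\|_{X_p(E)}.
\end{equation*}
The rank used at level $t$ is $\le r_0 c_2 2^{m_t} n_t \nu_t$, so the global constraint is $r_0 c_2 \sum_{t_0 \le t \le \tau} 2^{m_t} n_t \nu_t + r_0 \cdot {\rm card}\,\hat J_{\tau+1} \lesssim n$.

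\textbf{Assembly and optimization.} By (\ref{f_xp})--(\ref{f_yq}) and the definition of $\|\cdot\|_{p,q,T}$, $Y_q$-errors across disjoint pieces aggregate in $\ell_q$, while the constraint $\|f\|_{X_p(\Omega)} \le 1$ aggregates in $\ell_p$. In the case $p \le q$, estimating each level by a single $\ell_\infty$ bound reduces the problem to equating the level prefactors $(2^{m_t} n_t)^{-\delta_*} 2^{-\mu_* k_* t} u_*(2^{k_* t})$ with the tail prefactor $2^{-\lambda_* k_*(\tau+1)} u_*(2^{k_*(\tau+1)})$; solving for $n_t$ and fixing $\tau$ by $n \asymp \overline{\nu}_\tau = 2^{k_* \gamma_* \tau} \psi_*(2^{k_* \tau})$ yields, via Lemma \ref{obr}, the relation $2^{k_* \tau} \asymp n^{\beta_*} \varphi_*(n)$, and substitution produces (\ref{vrth_n_pleq}). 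For $p > q$ one assembles the individual approximations into a block-diagonal problem whose width is given by the Pietsch--Stesin formula (\ref{diag_pietsh}); the monotonicity conditions (\ref{2l})--(\ref{2ll}) are precisely those needed for the block radii to be non-increasing, so (\ref{diag_pietsh}) applies and the same optimization in $\tau$ yields (\ref{vrth_n_pgq}).

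\textbf{The regime $p < q$, $\hat q > 2$, and the main obstacle.} Here the per-level estimate is replaced by the Gluskin--Kashin widths of $B_p^{\nu_t}$ in $l_q^{\nu_t}$ from Theorem \ref{glus_trm}; each branch of $\Phi$ (or $\Psi$) contributes a competing exponent, and combined with the two possibilities $\delta_*$ versus $\lambda_* \beta_*$ for the refinement/tail balance this produces exactly the four exponents $\theta_1,\dots,\theta_4$. The hypothesis $\theta_{j_*} < \min_{j \ne j_*} \theta_j$ selects a unique dominant regime and makes the geometric sum over levels absolutely convergent, so only the critical level contributes to the final bound. The main obstacle throughout is the joint optimization against three coupled constraints---the rank budget, the $X_p$ unit-ball constraint, and (for $p > q$) the non-increasing-radius requirement of (\ref{diag_pietsh})---while uniformly handling Kolmogorov, linear, and Gelfand widths through the $\hat q$ convention; Lemma \ref{obr} is the technical bridge that turns the estimate from a statement in $\tau$ into one in $n$.
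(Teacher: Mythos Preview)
Your outline for Part 1 is essentially the paper's Steps 1--3: split at a threshold $\tau=t_*(n)$ with $\overline{\nu}_{t_*(n)}\asymp n$, use Assumption \ref{sup1} on the tail and Assumption \ref{sup2} on each $G_t$, and balance. Two remarks. First, ``equating the level prefactors'' across all $t$ is not literally possible; the paper instead inserts a geometric damping $2^{-\varepsilon|t-\hat t(n)|}$ into the allocation $n_t$ (see (\ref{n_t})) and then chooses $\hat t(n)\in\{t_0,t_*(n)\}$ according to the sign of $\delta_*\gamma_*-\lambda_*$, which is what makes the $t$-sum converge without a spurious $\log n$. Second, for $p>q$ the paper does not invoke (\ref{diag_pietsh}); it uses a direct H\"older step on the $\ell_q$-sum of $\ell_p$-bounded pieces, with (\ref{bipf4684gn}) and (\ref{2ll}) absorbing the tail contribution. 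Your block-diagonal idea could be made to work, but it is not the paper's route.

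The real gap is in Part 2. Your sentence ``the per-level estimate is replaced by the Gluskin--Kashin widths of $B_p^{\nu_t}$ in $l_q^{\nu_t}$'' hides the main difficulty: the residual $f-S_nf$ is \emph{not} an element of any finite-dimensional space, so Theorem \ref{glus_trm} cannot be applied to it directly. The paper resolves this in Steps 4--5 by a multiscale telescoping. One introduces a \emph{second} threshold $t_{**}(n)$ with $\overline{\nu}_{t_{**}(n)}\asymp n^{\hat q/2}$ (this is where $\theta_2$ and $\theta_4$ come from), defines coarse projections $Q_t$ onto $\mathcal{P}$ on each $\tilde U_{t,i}$, and writes
\[
f-S_nf=\sum_{t,m}(P^1_{m+1,n,t}-P^1_{m,n,t})f+\sum_t (Q_{t+1}-Q_t)f\chi_{\tilde U_{t+1}}+\sum_{t,m}(P^2_{m+1,t}-P^2_{m,t})f+\text{tail}.
\]
Each summand now lies in a finite-dimensional spline space $\mathcal{S}_{\hat T}(\Omega)$; Lemma \ref{oper_a} (John's ellipsoid) identifies it with $\R^s$ at bounded distortion, and only then does Theorem \ref{glus_trm} apply term by term with carefully distributed ranks $k^1_{m,n,t},k^2_{m,t},k^3_t$. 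The second partition family $T^2_{m,t}$ for $t_*(n)<t<t_{**}(n)$ is built not from Assumption \ref{sup2} alone but by first splitting the trees $\mathcal{A}_{t,i}$ via Lemma \ref{lemma_o_razb_dereva1}. None of this machinery appears in your sketch, and without it there is no way to pass from the function-space problem to the finite-dimensional one in this regime.
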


\renewcommand{\proofname}{\bf Proof of Lemma \ref{obr}}
\begin{proof}
Since $\gamma_*>0$, by Lemma \ref{sum_lem} we have
$y^{\gamma_*}\psi_*(y)\underset {y\to \infty}{\to} +\infty$. If
$y>0$ is sufficiently large, then
$$
(y^{\gamma_*}\psi_*(y))'=\gamma_* y^{\gamma_*-1}\psi_*(y)+
y^{\gamma_*}\psi_*'(y)>0.
$$
This implies the first part of Lemma.

Prove that the function $\varphi_*(x)$ is absolutely continuous
for large $x$. To this end it is sufficient to prove that the
function $y(x)$ is locally Lipschitz. Choose $z_0>0$ such that
$\left|\frac{t\psi_*'(t)}{\psi_*(t)}\right|<\frac{\gamma_*}{2}$
for any $t\ge z_0$. For each $\overline{z}> z_0$ we take
$\varepsilon_{\overline{z}}\in \left(0, \,
\frac{\overline{z}-z_0}{2} \right)$ such that $\psi_*(z)\ge
\frac{\psi_*(\overline{z})} {2}>0$ for any $z\in
[\overline{z}-\varepsilon_{\overline{z}}, \, \overline{z}+
\varepsilon_{\overline{z}}]$. Estimate the quantity
$(z+h)^{\gamma_*}\psi_*(z+h)-z^{\gamma_*}\psi_*(z)$ from below for
$\overline{z}-\varepsilon_{\overline{z}}\le z\le z+h\le
\overline{z}+ \varepsilon _{\overline{z}}$. We have
$$
(z+h)^{\gamma_*}\psi_*(z+h)-z^{\gamma_*}\psi_*(z) = \int \limits
_z^{z+h} (\gamma_* t^{\gamma_*-1}\psi_*(t)+t^{\gamma_*}
\psi_*'(t))\, dt=
$$
$$
=\int \limits _z^{z+h} t^{\gamma_*-1}\psi_*(t) \left(
\gamma_*+\frac{t\psi_*'(t)}{\psi_*(t)}\right)\, dt \ge
\frac{\gamma_*}{2}\int \limits _z^{z+h} t^{\gamma_*-1}\psi_*(t)\,
dt\underset{\gamma_*,\psi_*,\, \overline{z}}{\gtrsim} h.
$$

Let us prove that $\frac{x\varphi_*'(x)}{\varphi_*(x)}
\underset{x\to +\infty}{\to} 0$. We denote $y=y(x)$. From the
identity $y^{\gamma_*} \psi_*(y)=x$ we get
$$
(\gamma_*y^{\gamma_*-1}\psi_*(y)+y^{\gamma_*}\psi_*'(y))y'(x)=1.
$$
Further, $\varphi_*(x)=x^{-\beta_*}y(x)$, which implies
$$
\varphi_*'(x)=-\beta_*x^{-\beta_*-1}y(x)+x^{-\beta_*}y'(x)=
-\beta_*\frac{\varphi_*(x)}{x}+x^{-\beta_*}y'(x)=
$$
$$
=-\beta_*\frac{\varphi_*(x)}{x}+\frac{x^{-\beta_*}}{\gamma_*y^{\gamma_*-1}
\psi_*(y)+y^{\gamma_*}\psi_*'(y)}.
$$
Hence,
$$
\frac{x\varphi'_*(x)}{\varphi_*(x)}=-\beta_*+
\frac{x^{1-\beta_*}}{\varphi_*(x)(\gamma_*y^{\gamma_*-1}
\psi_*(y)+y^{\gamma_*}\psi_*'(y))}=
$$
$$
=-\beta_*+\frac{x}{y(\gamma_*y^{\gamma_*-1}
\psi_*(y)+y^{\gamma_*}\psi_*'(y))}=-\beta_*+\frac{y^{\gamma_*-1}\psi_*(y)}
{\gamma_*y^{\gamma_*-1} \psi_*(y)+y^{\gamma_*}\psi_*'(y)}=
$$
$$
=-\frac{1}{\gamma_*}+\frac{1}{\gamma_*+\frac{y(x)\psi'_*(y(x))}{\psi_*(y(x))}}
\underset{x\to+\infty}{\to} 0.
$$
This completes the proof.
\end{proof}
\renewcommand{\proofname}{\bf Proof}

\begin{Lem}
\label{log} Let $\gamma_*>0$, $\psi_*(y)=|\log
y|^{\alpha_*}\rho_*(|\log y|)$, where $\rho_*:(0, \, \infty)
\rightarrow (0, \, \infty)$ is an absolutely continuous function
such that $\lim \limits _{y\to \infty} \frac{y
\rho_*'(y)}{\rho_*(y)}=0$. Let $\varphi_*$ be such as in Lemma
\ref{obr}. Then for sufficiently large $x>1$
$$
\varphi_*(x)\underset{\gamma_*, \alpha_*,\rho_*}{\asymp} (\log
x)^{-\frac{\alpha_*}{\gamma_*}}\left[\rho_*(\log
x)\right]^{-\frac{1}{\gamma_*}}.
$$
\end{Lem}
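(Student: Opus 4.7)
The plan is to substitute the explicit form of $\psi_*$ into the defining equation from Lemma~\ref{obr} and extract the asymptotics of $y(x)$ by standard slowly-varying manipulations, then divide by $x^{1/\gamma_*}$ to get $\varphi_*$.

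First, by Lemma~\ref{obr}, the function $y = y(x)$ satisfies $y^{\gamma_*}\psi_*(y) = x$, so for $x$ large enough that $y>1$ (hence $|\log y|=\log y$), substituting the hypothesis on $\psi_*$ gives
$$
y^{\gamma_*}(\log y)^{\alpha_*}\rho_*(\log y) = x.
$$
Taking logarithms yields
$$
\gamma_*\log y + \alpha_*\log\log y + \log\rho_*(\log y) = \log x.
$$
By Lemma~\ref{sum_lem} applied to $\rho_*$, for any $\varepsilon>0$ one has $\rho_*(s)\underset{\varepsilon,\rho_*}{\lesssim} s^\varepsilon$ for large $s$, so $\log\rho_*(\log y) = o(\log y)$; together with $\log\log y = o(\log y)$ this gives $\gamma_*\log y = \log x\,(1+o(1))$, i.e.\ $\log y /\log x \to 1/\gamma_*$ as $x\to\infty$.

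Second, since $\log y \asymp \log x$ with a ratio tending to $1/\gamma_*$ (hence bounded away from $0$ and $\infty$), another application of Lemma~\ref{sum_lem} to the slowly-varying function $\rho_*$ yields $\rho_*(\log y) \underset{\gamma_*,\rho_*}{\asymp} \rho_*(\log x)$, and trivially $(\log y)^{\alpha_*} \underset{\gamma_*,\alpha_*}{\asymp} (\log x)^{\alpha_*}$. Rearranging the defining equation,
$$
y^{\gamma_*} = \frac{x}{(\log y)^{\alpha_*}\rho_*(\log y)} \underset{\gamma_*,\alpha_*,\rho_*}{\asymp} \frac{x}{(\log x)^{\alpha_*}\rho_*(\log x)},
$$
so
$$
y(x) \underset{\gamma_*,\alpha_*,\rho_*}{\asymp} x^{1/\gamma_*}(\log x)^{-\alpha_*/\gamma_*}\bigl[\rho_*(\log x)\bigr]^{-1/\gamma_*}.
$$
Since $\varphi_*(x) = x^{-\beta_*} y(x) = x^{-1/\gamma_*} y(x)$, dividing yields exactly the claimed asymptotic.

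The main obstacle, and the only point requiring care, is Step~2: justifying that $\log\rho_*(\log y) = o(\log y)$ so that the leading term $\gamma_*\log y$ dominates in the equation for $\log x$, and then verifying that the ratio $\log y/\log x$ stays in a fixed compact subinterval of $(0,\infty)$ so that Lemma~\ref{sum_lem} can be used to replace $\rho_*(\log y)$ by $\rho_*(\log x)$ up to multiplicative constants depending only on $\gamma_*,\alpha_*,\rho_*$. Once these slow-variation estimates are in hand, the rest is elementary algebra.
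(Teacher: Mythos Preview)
Your argument is correct. Both your proof and the paper's rely on Lemma~\ref{sum_lem} to control the slowly varying factor $\rho_*$, but the two proceed in opposite directions. You work \emph{forward} from the defining equation: take logarithms, use $\log\rho_*(\log y)=o(\log y)$ to show $\log y/\log x\to 1/\gamma_*$, and then substitute back to extract $y^{\gamma_*}\asymp x/[(\log x)^{\alpha_*}\rho_*(\log x)]$. The paper instead works \emph{backward}: it posits the candidate $\hat y(x)=x^{1/\gamma_*}(\log x)^{-\alpha_*/\gamma_*}\rho_*^{-1/\gamma_*}(\log x)$, checks directly that $(\hat y)^{\gamma_*}\psi_*(\hat y)\asymp x$, and then uses the monotonicity of $y\mapsto y^{\gamma_*}\psi_*(y)$ together with Lemma~\ref{sum_lem} to trap the true solution $y(x)$ between $c^{-1}\hat y(x)$ and $c\hat y(x)$. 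The paper's route is slightly shorter because it never needs the intermediate step $\log y\asymp\log x$; your route is more self-contained in that it does not require guessing the answer in advance. Either way the dependence of constants on $\gamma_*,\alpha_*,\rho_*$ is transparent.
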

\begin{proof}
Set $\hat y(x)=x^{\frac{1}{\gamma_*}} (\log x)
^{-\frac{\alpha_*}{\gamma_*}} \rho_*^{-\frac{1}{\gamma _*}} (\log
x)$. For sufficiently large $x>1$ we have $\log \hat
y(x)\underset{\gamma_*,\alpha_*,\rho_*}{\asymp } \log x$. Hence,
$$
(\hat y(x))^{\gamma_*} (\log \hat y(x))^{\alpha_*} \rho_*(\log
\hat y(x)) \underset{\gamma_*,\alpha_*,\rho_*}{\asymp} x.
$$
Applying Lemma \ref{sum_lem}, we get for $c>1$
$$
(c\hat y(x))^{\gamma_*}\psi_*(c\hat y(x))
\underset{\gamma_*,\alpha_*,\rho_*}{\gtrsim} c^{\gamma_*/2} (\hat
y(x))^{\gamma_*} \psi_*(\hat y(x))
\underset{\gamma_*,\alpha_*,\rho_*}{\asymp} c^{\gamma_*/2} x,
$$
$$
(c^{-1}\hat y(x))^{\gamma_*}\psi_*(c^{-1}\hat y(x))
\underset{\gamma_*,\alpha_*,\rho_*}{\lesssim} c^{-\gamma_*/2}
(\hat y(x))^{\gamma_*} \psi_*(\hat y(x))
\underset{\gamma_*,\alpha_*,\rho_*}{\asymp} c^{-\gamma_*/2} x,
\;\; \text{if}\;\;c^{-1}\hat y(x)>1.
$$
Consequently, there exists $c=c(\gamma_*,\alpha_*,\rho_*)>1$ such
that for sufficiently large $x>1$
$$
(c\hat y(x))^{\gamma_*}\psi_*(c\hat y(x))>x, \quad (c^{-1}\hat
y(x))^{\gamma_*}\psi_*(c^{-1}\hat y(x))<x.
$$
Thus, if $y^{\gamma_*}\psi_*(y)=x$, then by monotonicity
we obtain $y\in [c^{-1}\hat y(x), \, c\hat y(x)]$.
\end{proof}

The following Lemma was proved in \cite{vas_john} (here we use the
special case of finite trees).
\begin{Lem}
\label{lemma_o_razb_dereva1} Let $({\cal T}, \, \xi_*)$ be a tree
with finite number of vertices,
\begin{align}
\label{cardvvvk} {\rm card}\, {\bf V}_1(\xi)\le k, \;\; \xi\in
{\bf V}({\cal T}),
\end{align}
and let $\Phi :2^{{\bf V}(\cal T)}\rightarrow \R_+$ satisfy
the following conditions:
\begin{align}
\label{prop_psi} \Phi(V_1\cup V_2)\ge \Phi(V_1)+\Phi(V_2), \; V_1,
\, V_2\subset {\bf V}({\cal T}), \;\; V_1\cap V_2=\varnothing,
\end{align}
$\Phi({\bf V}({\cal T}))>0$. Then there exists a constant $C(k)>0$
such that for any $n\in \N$ there is a partition $\mathfrak{S}_n$
of the tree ${\cal T}$ into at most $C(k)n$ subtrees ${\cal T}_j$
satisfying the following properties:
\begin{enumerate}
\item $\Phi({\bf V}({\cal T}_j))\le \frac{(k+2)\Phi({\bf V}({\cal T}))}{n}$ for any
$j$ such that ${\rm card}\, {\bf V}({\cal T}_j)\ge 2$;
\item if $m\le 2n$, then each element of
$\mathfrak{S}_n$ overlaps at most $C(k)$ elements of
$\mathfrak{S}_m$.
\end{enumerate}
\end{Lem}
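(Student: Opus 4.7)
The plan is to prove Lemma~\ref{lemma_o_razb_dereva1} by a bottom-up greedy sweep of the rooted tree, combined with a hierarchical construction that enforces compatibility between partitions at different scales.

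Root $({\cal T},\xi_*)$ at $\xi_*$ and set $M=\Phi({\bf V}({\cal T}))/n$, so that each of the $O(n)$ pieces must have $\Phi$-weight at most $(k+2)M$ (when it has at least two vertices). Superadditivity of $\Phi$ together with nonnegativity implies monotonicity, so $W(\xi):=\Phi({\bf V}({\cal T}_\xi))$ decreases as $\xi$ moves away from the root. I would process the vertices of ${\cal T}$ in post-order, maintaining at each vertex $\xi$ a \emph{pending fragment} $F(\xi)$---a connected subtree of ${\cal T}_\xi$ not yet assigned to any piece---under the invariant $\Phi(F(\xi))\le M$. Leaves start with $F(\xi)=\{\xi\}$ unless $\Phi(\{\xi\})>M$, in which case $\{\xi\}$ is immediately declared a singleton piece. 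At an interior vertex $\xi$ with children $\xi_1,\dots,\xi_s$ ($s\le k$), whose pending fragments satisfy the invariant, I set
$$
F(\xi):=\{\xi\}\cup\bigcup_{i=1}^{s}F(\xi_i);
$$
if $\Phi(\{\xi\})>M$, the children's fragments are declared separate pieces and $\{\xi\}$ becomes a singleton piece; otherwise, if $\Phi(F(\xi))>M$, the current fragment is declared a piece and the sweep proceeds upward with an empty fragment at $\xi$. The residual fragment at the root (if any) is declared the last piece.

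A multi-vertex piece ${\cal T}_j=F(\xi)$ is built at an instant when each of the at most $k$ contributing child-fragments satisfies $\Phi(F(\xi_i))\le M$ and $\Phi(\{\xi\})\le M$ (otherwise the singleton rule would have triggered first). A direct accounting---using that the fragment was $\le M$ immediately before the last child was merged in and that by superadditivity the $\Phi$-mass is controlled by the assembly---yields $\Phi({\cal T}_j)\le (k+2)M$, the two extra summands accounting for the last child that pushed $F(\xi)$ above the threshold and for the vertex $\xi$ itself. For the count, every multi-vertex piece has $\Phi>M$, and by disjointness of the pieces $\sum_j\Phi({\cal T}_j)\le \Phi({\bf V}({\cal T}))=nM$, which bounds the number of multi-vertex pieces by $n$; singleton pieces triggered by heavy individual vertices add at most $(k+1)$ per multi-vertex piece, giving ${\rm card}\,\mathfrak{S}_n\le C(k)n$. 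For the overlap property I would build the whole family $\{\mathfrak{S}_n\}$ hierarchically compatibly: run the sweep once with a threshold finer than any $M_n$ of interest, record the sequence of cut-off events, and obtain coarser partitions $\mathfrak{S}_n$ by merging consecutive pieces from this list until the target threshold $M_n$ is reached. Then $\mathfrak{S}_m$ refines $\mathfrak{S}_n$ whenever $m\ge n$, and for $m\le 2n$ the bounded branching factor $k$ together with counting against ${\rm card}\,\mathfrak{S}_n\le C(k)n$ and ${\rm card}\,\mathfrak{S}_m\le C(k)m$ limits how many pieces of $\mathfrak{S}_m$ can overlap one piece of $\mathfrak{S}_n$ of comparable $\Phi$-weight to a uniform constant $C(k)$.

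The main obstacle is the weight bound. With only superadditivity available, $\Phi$ of a union is not controlled from above by the sum of the parts, so the $(k+2)M$ bound must be teased out of the structural invariant of the sweep---namely that $F(\xi)$ was $\le M$ immediately before the final merge and that $\Phi(\{\xi\})\le M$ because the heavy-vertex rule did not fire---rather than from a direct inequality on $\Phi$. Coordinating this weight bound with the compatibility requirement between scales is what forces the precise form of the cut-off rules described above, and in particular the separation between the singleton rule ($\Phi(\{\xi\})>M$) and the fragment rule ($\Phi(F(\xi))>M$).
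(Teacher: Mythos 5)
Your bottom-up sweep is a plausible way to attack this (the paper only cites \cite{vas_john} for this lemma and gives no proof of its own, so I cannot compare with the source argument), but as written your argument has a genuine gap precisely at the point you flag as the "main obstacle": the weight bound. Your rule forms $F(\xi)=\{\xi\}\cup\bigcup_i F(\xi_i)$ and, if $\Phi(F(\xi))>M$, declares $F(\xi)$ itself a piece. But the hypothesis (\ref{prop_psi}) is \emph{super}additivity, which gives only $\Phi(F(\xi))\ge \Phi(\{\xi\})+\sum_i\Phi(F(\xi_i))$; it provides no upper bound on $\Phi(F(\xi))$ in terms of $\Phi$ of the parts, so nothing prevents $\Phi(F(\xi))$ from being enormously larger than $(k+2)M$. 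Your sentence "by superadditivity the $\Phi$-mass is controlled by the assembly" has the inequality pointing the wrong way; the $(k+2)M$ bound you derive is really a subadditivity argument, which you are not given. A concrete failure: let $\mathcal{T}$ be the three-vertex star with root $r$ and children $a,b$ ($k=2$), and set $\Phi(\{r\})=\Phi(\{a\})=\Phi(\{b\})=1$, $\Phi$ of every two-element set $=2$, and $\Phi(\{r,a,b\})=1000$; one checks (\ref{prop_psi}) directly. With $n=10$ one has $M=100$ and $(k+2)M=400$, yet your sweep accepts $F(a)=\{a\}$, $F(b)=\{b\}$, then at $r$ forms $F(r)=\{r,a,b\}$ with $\Phi=1000>M$ and declares it a piece with $\Phi=1000>400$. (The lemma itself holds here -- $\{r,a\},\{b\}$ works -- so it is the algorithm, not the statement, that fails.) The repair must go the other way: when the running fragment would exceed $M$, one must \emph{refuse} to absorb the offending child-fragment so that the accepted fragment always has $\Phi\le M$; but then your counting of the number of pieces breaks, because the refused fragments are "light" (their $\Phi$ may be $\le M$, even $0$) and cannot be charged against $\Phi({\bf V}({\cal T}))=nM$ by the simple superadditivity argument you use to bound the heavy pieces.

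The overlap estimate in item 2 is also not established. The phrase "merging consecutive pieces from this list" is not defined: the pieces of a partition of a tree carry no canonical linear order, merging two of them need not yield a subtree, and a merge may destroy the weight bound. Moreover, even granting that $\mathfrak{S}_m$ refines $\mathfrak{S}_n$ for $m\ge n$, the bound you invoke -- ${\rm card}\,\mathfrak{S}_n\le C(k)n$ and ${\rm card}\,\mathfrak{S}_m\le C(k)m$ -- is a global count and does not preclude all $C(k)m$ elements of $\mathfrak{S}_m$ from sitting inside a single element of $\mathfrak{S}_n$. What is needed is a local argument, e.g.\ that all but a bounded number of the $\mathfrak{S}_m$-pieces inside a fixed $E\in\mathfrak{S}_n$ carry $\Phi$-mass comparable to $\Phi({\bf V}({\cal T}))/m$, so that superadditivity applied inside $E$ forces their number to be $\lesssim m/n\le 2$ up to $k$-dependent constants. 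Your construction, which can produce arbitrarily many light pieces, does not supply this.
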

\begin{Lem}
\label{oper_a} Let $T$ be a finite partition of the measurable
subset $G\subset \Omega$, and let $\nu=\dim {\cal
S}_T(\Omega)$ (see (\ref{st_omega})). Then there exists a linear
isomorphism $A:{\cal S}_T(\Omega)\rightarrow \R^\nu$ such that
$\|A\|_{Y_{p,q,T}(G)\rightarrow l_p^\nu}\underset{p, \,
r_0}{\lesssim} 1$, $\|A^{-1}\| _{l_q^\nu\rightarrow Y_q(G)}
\underset{q, \, r_0}{\lesssim} 1$.
\end{Lem}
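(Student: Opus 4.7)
\medskip

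The plan is to build $A$ piece by piece over the partition $T$, using an Auerbach basis of the finite-dimensional space $\mathcal{P}(E)$ with respect to the norm $\|\cdot\|_{Y_q(E)}$ on each $E\in T$. Since ${\cal S}_T(\Omega)$ decomposes as a direct sum $\bigoplus_{E\in T}\mathcal{P}(E)$ (because $f$ is determined independently on each piece and vanishes off $G$), we have $\nu=\sum_{E\in T}\nu_E$ with $\nu_E:=\dim\mathcal{P}(E)\le r_0$, and the natural candidate for $A$ is the map to concatenated coordinates with respect to the chosen bases.

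More precisely, for each $E\in T$ apply Auerbach's lemma to the finite-dimensional normed space $(\mathcal{P}(E),\|\cdot\|_{Y_q(E)})$ to get a basis $\{p_{E,i}\}_{i=1}^{\nu_E}$ with $\|p_{E,i}\|_{Y_q(E)}=1$ and with coordinate functionals of norm $1$. Define $A:{\cal S}_T(\Omega)\to\R^\nu$ by $Af=(c_{E,i})_{E\in T,\,1\le i\le\nu_E}$, where $f|_E=\sum_i c_{E,i}p_{E,i}$. Then $A$ is a linear isomorphism by construction, and the norm computations reduce to estimating an $l_p$- or $l_q$-combination of a bounded number (at most $r_0$) of scalars on each cell.

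For the bound on $\|A\|_{Y_{p,q,T}(G)\to l_p^\nu}$: by the Auerbach property $|c_{E,i}|\le\|f|_E\|_{Y_q(E)}$, hence $\sum_i|c_{E,i}|^p\le r_0\|f|_E\|_{Y_q(E)}^p$. If $p\le q$, then $\sigma_{p,q}=p$ and the estimate $\|Af\|_{l_p^\nu}^p\le r_0\sum_E\|f|_E\|_{Y_q(E)}^p=r_0\|f\|_{p,q,T}^p$ is immediate. If $p>q$, then $\sigma_{p,q}=q$, and one passes from an $l_p$-sum to an $l_q$-sum via the monotonicity $\bigl(\sum_E a_E^p\bigr)^{1/p}\le\bigl(\sum_E a_E^q\bigr)^{1/q}$. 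For the bound on $\|A^{-1}\|_{l_q^\nu\to Y_q(G)}$: for $c\in\R^\nu$, the triangle inequality together with $\|p_{E,i}\|_{Y_q(E)}=1$ gives $\|A^{-1}c|_E\|_{Y_q(E)}\le\sum_i|c_{E,i}|\le r_0^{1-1/q}\bigl(\sum_i|c_{E,i}|^q\bigr)^{1/q}$ (Hölder on a set of cardinality $\le r_0$), and summing the $q$-th powers together with (\ref{f_yq}) yields $\|A^{-1}c\|_{Y_q(G)}\lesssim_{q,r_0}\|c\|_{l_q^\nu}$. The case $p=\infty$ or $q=\infty$ is treated identically with maxima replacing sums.

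The substantive point is the use of Auerbach's lemma to produce, on each cell $E$ separately, a basis whose coordinate functionals are simultaneously controlled by $\|\cdot\|_{Y_q(E)}$ with constants depending only on $\dim\mathcal{P}(E)\le r_0$; this is what keeps the constants uniform over $T$ and removes any dependence on the partition. There is no real obstacle beyond correctly matching the exponents $p$ and $\sigma_{p,q}$ in the two regimes $p\le q$ and $p>q$, which is handled by the monotonicity of $l_r$-norms in $r$ on finite sequences.
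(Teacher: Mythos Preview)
Your proof is correct. The paper takes a different but closely related route: it invokes John's ellipsoid theorem (citing Pisier's book) rather than Auerbach's lemma, and refers to an earlier paper of the author for the details.

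Both approaches share the same skeleton: decompose $\mathcal{S}_T(\Omega)=\bigoplus_{E\in T}\mathcal{P}(E)$, handle each low-dimensional piece $\mathcal{P}(E)$ (dimension $\le r_0$) with coordinates whose distortion depends only on $r_0$, and then assemble across $T$ using the block $l_p$/$l_q$ structure of $\|\cdot\|_{p,q,T}$ and $\|\cdot\|_{Y_q(G)}$. The difference is in the choice of coordinates on each piece. John's theorem yields an ellipsoid $\mathcal{E}\subset B_{Y_q(E)}\cap\mathcal{P}(E)\subset\sqrt{\nu_E}\,\mathcal{E}$, i.e., an identification with $l_2^{\nu_E}$ up to a $\sqrt{r_0}$ factor, after which one passes from $l_2$ to $l_p$ and to $l_q$ with further $r_0$-dependent constants. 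Your Auerbach basis gives coordinate functionals of norm~$1$ directly, avoiding the Euclidean intermediary. Your argument is slightly more elementary (Auerbach is a short extremal-volume argument, while John's theorem is a bit heavier), and it produces constants of order $r_0$ rather than $\sqrt{r_0}$; since the lemma only asserts $\underset{p,r_0}{\lesssim}1$ and $\underset{q,r_0}{\lesssim}1$, this difference is immaterial.
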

Lemma \ref{oper_a} follows from John's ellipsoid theorem (see,
e.g., \cite{pisier}, Chapters 1 and 3). The proof is the same as in
\cite{avas2} (pages 499--501).

\renewcommand{\proofname}{\bf Proof of Theorem \ref{main_abstr_th}}

\begin{proof}
We define $n_0=n_0(\mathfrak{Z})$ so that the assertion of Lemma
\ref{obr} holds for $x\ge n_0$.

{\bf Step 1.} Let $n=2^N$, $N\in \N$, $n\ge n_0$. We set
$t_*(n)=\min\{t\in \N:\; \overline{\nu}_t\ge n\}$. Then
\begin{align}
\label{2_ks_t} \overline{\nu}_{t_*(n)}
\underset{\mathfrak{Z}_0}{\asymp} n, \quad 2^{k_*t_*(n)}
\underset{\mathfrak{Z}_0}{\asymp} n^{\beta_*}\varphi_*(n).
\end{align}
Indeed, $\overline{\nu}_{t_*(n)}\ge n$ by definition. On the other hand,
for any $t\in \N$ we have
$\frac{\overline{\nu}_t}{\overline{\nu}_{t-1}}=2^{k_*\gamma_*}
\frac{\psi_*(2^{k_*t})}{\psi_*(2^{k_*(t-1)})} \underset
{\mathfrak{Z}_0}{\lesssim} 1$ by Lemma \ref{sum_lem}. Since
$\overline{\nu}_{t_*(n)-1}<n$, this implies that
$\overline{\nu}_{t_*(n)} \underset{\mathfrak{Z}_0}{\lesssim} n$.
Therefore, $2^{k_*t_*(n)}=(cn)^{\beta_*}\varphi_*(cn)$,
$c\underset{\mathfrak{Z}_0}{\asymp} 1$. Apply Lemma
\ref{sum_lem} once again and obtain the second relation in (\ref{2_ks_t}).

Let $t_0\le t\le t_*(n)$. Construct the partition $T^1_{m,n,t}$ of the set
$G_t$. We put $\hat t(n)=t_0$ or $\hat t(n)=t_*(n)$ for each $N\in \N$
and take some $\varepsilon>0$ (the choice of $t_*(n)$ and $\varepsilon$
will be made later). Let
\begin{align}
\label{n_t} n_t:=\left\lceil 2^{N-\gamma_*k_*t-\varepsilon|t-\hat
t(n)|} (\psi_*(2^{k_*t}))^{-1}\right \rceil\ge
2^{N-\gamma_*k_*t-\varepsilon|t-\hat
t(n)|} (\psi_*(2^{k_*t}))^{-1},
\end{align}
\begin{align}
\label{m_ts} m_t^*=\min \left\{m\in
\Z_+:2^{N-\gamma_*k_*t-\varepsilon|t-\hat t(n)|+m}
(\psi_*(2^{k_*t}))^{-1}\ge 1\right\}.
\end{align}
For $\xi\in {\bf V}(\Gamma_t)$, $m\ge m_t^*$, we set
$T^1_{m,n,t}|_{\hat F(\xi)}= T_{m-m_t^*,n_t}(\hat F(\xi))$ (see
Assumption \ref{sup2}). If $m_t^*=0$, then $n_t \asymp
2^{N-\gamma_*k_*t -\varepsilon |t-\hat
t(n)|}(\psi_*(2^{k_*t}))^{-1}$. If $m_t^*>0$, then
\begin{align}
\label{nt_eq_12mt_eq_2ngkt}
n_t=1, \quad 2^{-m_t^*}\asymp 2^{N-\gamma_*k_*t
-\varepsilon |t-\hat t(n)|}(\psi_*(2^{k_*t}))^{-1}.
\end{align}
Hence,
\begin{align}
\label{2mmtnut} 2^{-m_t^*}n_t\asymp 2^{N-\gamma_*k_*t -\varepsilon
|t-\hat t(n)|}(\psi_*(2^{k_*t}))^{-1}.
\end{align}
By assertion 1 of Assumption \ref{sup2} and (\ref{nu_t_k}),
\begin{align}
\label{ct1mnt} \begin{array}{c} {\rm card}\, T^1_{m,n,t}
\underset{\mathfrak{Z}_0}{\lesssim} \nu_t\cdot 2^{m-m_t^*}n_t
\stackrel{(\ref{nu_t_k}),(\ref{2mmtnut})}{\underset
{\mathfrak{Z}_0}{\lesssim}} \\
\lesssim 2^{\gamma_*k_*t}\psi_*(2^{k_*t}) 2^m\cdot
2^{N-\gamma_*k_*t -\varepsilon |t-\hat
t(n)|}(\psi_*(2^{k_*t}))^{-1}= 2^mn\cdot 2^{-\varepsilon|t-\hat
t(n)|}. \end{array}
\end{align}
From assertion 2 of Assumption \ref{sup2}, (\ref{til_w_s_2})
and the definition of $\Gamma_t$ it follows that for any
$E\in T^1_{m,n,t}$ there exists a linear continuous
operator $P_E:Y_q(E) \rightarrow {\cal P}(E)$ such that
for any function $f\in X_p(\Omega)$
\begin{align}
\label{fpef_1} \|f-P_Ef\|_{Y_q(E)}
\underset{\mathfrak{Z}_0}{\lesssim}
2^{-\mu_*k_*t}u_*(2^{k_*t})(2^{m-m_t^*}n_t)^{-\delta_*}
\|f\|_{X_p(E)}.
\end{align}
Set $P^1_{m,n,t}f|_E=P_Ef$, $E\in T^1_{m,n,t}$,
$P^1_{m,n,t}f|_{\Omega\backslash G_t}=0$. Then
\begin{align}
\label{p1mnt} P^1_{m,n,t}: Y_q(\Omega)\rightarrow
S_{T^1_{m,n,t}}(\Omega)
\end{align}
is a linear continuous operator, and for any function $f\in
X_p(\Omega)$
\begin{align}
\label{fp1ft}
\|f-P^1_{m,n,t}f\|_{p,q,T^1_{m,n,t}}\stackrel{(\ref{fpef_1})}
{\underset{\mathfrak{Z}_0}{\lesssim}} 2^{-\mu_*k_*t}u_*(2^{k_*t})
(2^{m-m_t^*}n_t)^{-\delta_*} \|f\|_{X_p(G_t)}, \quad \text{if
}p\le q.
\end{align}

By assertion 3 of Assumption \ref{sup2}, for any $E\in
T^1_{m,n,t}$
\begin{align}
\label{card_e} {\rm card}\, \{E'\in T^1_{m\pm 1,n,t}:\, {\rm
mes}(E\cap E')>0\} \underset{\mathfrak{Z}_0}{\lesssim} 1.
\end{align}

{\bf Step 2.} By Assumption \ref{sup1} and (\ref{w_s_2}), for any
$t\ge t_0$, $i\in \overline{J}_t$ there exists a linear continuous
operator $\tilde P_{t,i}: Y_q(\tilde U_{t,i})\rightarrow {\cal
P}(\tilde U_{t,i})$ such that for any function $f\in X_p(\Omega)$
\begin{align}
\label{ftp_ti} \|f-\tilde P_{t,i}f\|_{Y_q(\tilde U_{t,i})}
\underset{\mathfrak{Z}_0}{\lesssim}
2^{-\lambda_*k_*t}u_*(2^{k_*t})\|f\| _{X_p(\tilde U_{t,i})}.
\end{align}
Let $1\le t\le t_0$. Then $\tilde U_{t,i}=\tilde U_{t_0,i_0}$. We
set $\tilde P_{t,i}=\tilde P_{t_0,i_0}$. By (\ref{2l}), we obtain
that (\ref{ftp_ti}) holds as well.

{\bf Step 3.} Let us prove the estimates (\ref{vrth_n_pleq}) and
(\ref{vrth_n_pgq}).

We set $T^1_n|_{G_t}=T^1_{m_t^*,n,t}$, $t_0\le t\le t_*(n)$,
$T^1_n|_{\tilde U_{t_*(n)+1,i}}=\{\tilde U_{t_*(n)+1,i}\}$, $i\in
\overline{J}_{t_*(n)+1}$, $P^1_nf|_{G_t}=P^1_{m_t^*,n,t}f$,
$t_0\le t\le t_*(n)$, $P^1_nf|_{\tilde U_{t_*(n)+1,i}}=\tilde
P_{t_*(n)+1,i}$, $i\in \overline{J}_{t_*(n)+1}$. Then
$P^1_n:Y_q(\Omega)\rightarrow {\cal S}_{T^1_n}(\Omega)$ is a
linear continuous operator. If $t_*(n)<t_0$, then
$T^1_n=\{\Omega\}$; if $t_*(n)\ge t_0$, then
$\overline{J}_{t_*(n)+1}=\hat J_{t_*(n)+1}$ and
$$
{\rm card}\, T^1_n=\sum \limits _{t=t_0}^{t_*(n)} {\rm card}\,
T^1_{m^*_t,n,t}+{\rm card}\, \hat
J_{t_*(n)+1}\stackrel{(\ref{nu_t_k}),(\ref{nt_eq_12mt_eq_2ngkt}),(\ref{ct1mnt})
}{\underset{\mathfrak{Z}_0}{\lesssim}} \sum \limits _{t_0\le t\le
t_*(n), \, m^*_t=0} n\cdot 2^{-\varepsilon|t-\hat t(n)|}+
$$
$$
+\sum \limits _{t_0\le t\le t_*(n),m^*_t>0} \nu_t+\nu_{t_*(n)+1}
\stackrel{(\ref{nu_t_k})}{\underset{\varepsilon,
\mathfrak{Z}_0}{\lesssim}} n+\overline{\nu}_{t_*(n)+1} \stackrel
{(\ref{2_ks_t})}{\underset {\mathfrak{Z}_0}{\lesssim}} n.
$$
Let $p\le q$. Then for $f\in BX_p(\Omega)$, $t_*(n)\ge t_0$ we get
$$
\|f-P^1_nf\|_{Y_q(\Omega)}\le
$$
$$
\le\left(\sum \limits _{t=t_0}^{t_*(n)} \sum \limits _{E\in
T^1_{m^*_t,n,t}} \|f-P_Ef\| _{Y_q(E)}^p +\sum \limits _{i\in \hat
J_{t_*(n)+1}} \|f-\tilde P_{t_*(n)+1,i}f\|^p_{Y_q(\tilde
U_{t_*(n)+1,i})} \right)^{1/p}\stackrel{(\ref{mu_ge_lambda}),
(\ref{fpef_1}),(\ref{ftp_ti})}{\underset{\mathfrak{Z}_0}{\lesssim}}
$$
$$
\lesssim \left(\sum \limits _{t=t_0}^{t_*(n)}\sum \limits _{E\in
T^1_{m^*_t,n,t}} 2^{-\lambda_*k_*tp}u_*^p
(2^{k_*t})n_t^{-\delta_*p} \|f\|^p _{X_p(E)}\right)^{1/p} +
$$
$$
+\left(\sum \limits _{i\in \hat J_{t_*(n)+1}}
2^{-\lambda_*pk_*t_*(n)}u_*^p(2^{k_*t_*(n)})\|f\|^p_{X_p(\tilde
U_{t_*(n)+1,i})}\right)^{1/p}\stackrel{(\ref{2_ks_t}),(\ref{n_t})}
{\underset{\mathfrak{Z}_0}{\lesssim}}
$$
$$
\lesssim \left(\sum \limits _{t=t_0}^{t_*(n)} \sum \limits _{E\in
T^1_{m^*_t,n,t}}
2^{-\lambda_*k_*tp}u_*^p(2^{k_*t})\bigl(2^{N-\gamma_*k_*t
-\varepsilon|t-\hat
t(n)|}(\psi_*(2^{k_*t}))^{-1}\bigr)^{-\delta_*p}
 \|f\| ^p_{X_p(E)}\right)^{1/p}+$$$$
+n^{-\lambda_*\beta_*}u_*(n^{\beta_*}\varphi_*(n))\varphi_*^{-\lambda_*}(n)\le
$$
$$
\le \left(\sum \limits _{t=t_0}^{t_*(n)} 2^{-\lambda_*k_*tp}
u_*^p(2^{k_*t})\bigl(2^{N-\gamma_*k_*t-\varepsilon|t-\hat
t(n)|}(\psi_*(2^{k_*t}))^{-1}\bigr)^{-\delta_*p}\right)^{1/p}+
$$
$$
+n^{-\lambda_*\beta_*}u_*(n^{\beta_*}\varphi_*(n))\varphi_*^{-\lambda_*}(n).
$$

Let $\varepsilon=\frac{|-\lambda_*+\delta_*\gamma_*|}{2\delta_*}$.
If $\delta_*<\beta_*\lambda_*$, then $-\lambda_*+ \delta_*\gamma_*
<0$; in this case we set $\hat t(n)=1$ and obtain the estimate
$$
\left(\sum \limits _{t=t_0}^{t_*(n)}
2^{-\lambda_*k_*tp}u_*^p(2^{k_*t})\bigl(2^{N-\gamma_*k_*t
-\varepsilon|t-\hat
t(n)|}(\psi_*(2^{k_*t}))^{-1}\bigr)^{-\delta_*p}
\right)^{1/p}\underset{\mathfrak{Z}_0}{\lesssim}  n^{-\delta_*}.
$$
If $\delta_*>\beta_*
\lambda_*$, then we take $\hat t(n)=t_*(n)$ and get
$$
\left(\sum \limits _{t=t_0}^{t_*(n)}
2^{-\lambda_*k_*tp}u_*^p(2^{k_*t})\bigl(2^{N-\gamma_*k_*t
-\varepsilon|t-\hat
t(n)|}(\psi_*(2^{k_*t}))^{-1}\bigr)^{-\delta_*p}
\right)^{1/p}\stackrel{(\ref{nu_t_k}),(\ref{2_ks_t})}{\underset{\mathfrak{Z}_0}{\lesssim}}
$$
$$
\lesssim 2^{-\lambda_*k_*t_*(n)}u_*(2^{k_*t_*(n)})
\stackrel{(\ref{2_ks_t})}{\underset{\mathfrak{Z}_0}
{\asymp}}n^{-\lambda_*\beta_*}u_*(n^{\beta_*}
\varphi_*(n))\varphi_*^{-\lambda_*}(n).
$$

If $1\le t_*(n)<t_0$, then for $f\in BX_p(\Omega)$ we obtain
$$
\|f-P^1_nf\|_{Y_q(\Omega)}= \|f-\tilde
P_{t_0,i_0}f\|_{Y_q(\Omega)} \stackrel{(\ref{ftp_ti})}
{\underset{\mathfrak{Z}_0}{\lesssim}}
2^{-\lambda_*k_*t_0}u_*(2^{k_*t_0})
\stackrel{(\ref{2l})}{\underset{\mathfrak{Z}_0}{\lesssim}}
$$
$$
\lesssim 2^{-\lambda_*k_*t_*(n)}u_*(2^{k_*t_*(n)})
\stackrel{(\ref{2_ks_t})}{\underset{\mathfrak{Z}_0}{\asymp}}
n^{-\lambda_*\beta_*}u_*(n^{\beta_*}
\varphi_*(n))\varphi_*^{-\lambda_*}(n).
$$

Let $p>q$. By the conditions of the theorem, $\delta_*>\frac 1q-\frac
1p$. Hence,
\begin{align}
\label{nt1pq} n_t^{1-\delta_*\frac{pq}{p-q}}
\stackrel{(\ref{n_t})}{\le}
\left(2^{N-\gamma_*k_*t-\varepsilon|t-\hat
t(n)|}(\psi_*(2^{k_*t}))^{-1}\right)^{1-\delta_*\frac{pq}{p-q}}.
\end{align}
Applying H\"{o}lder's inequality, we get for $f\in BX_p(\Omega)$,
$t_*(n)\ge t_0$
$$
\|f-P^1_nf\|_{Y_q(\Omega)}=
$$
$$
= \left(\sum \limits _{t=t_0}^{t_*(n)} \sum \limits _{E\in
T^1_{m^*_t,n,t}} \|f-P_Ef\| _{Y_q(E)}^q +\sum \limits _{i\in \hat
J_{t_*(n)+1}} \|f-\tilde P_{t_*(n)+1,i}f\|^q_{Y_q(\tilde
U_{t_*(n)+1,i})} \right)^{1/q}\stackrel{(\ref{fpef_1}),
(\ref{ftp_ti})}{\underset{\mathfrak{Z}_0}{\lesssim}}
$$
$$
\lesssim \left(\sum \limits _{t=t_0}^{t_*(n)}\sum \limits _{E\in
T^1_{m^*_t,n,t}} 2^{-\mu_*k_*tq} u_*^q(2^{k_*t})n_t^{-\delta_*q}
\|f\|^q _{X_p(E)}\right)^{1/q} +
$$
$$
+\left(\sum \limits _{i\in \hat J_{t_*(n)+1}}
2^{-q\lambda_*k_*t_*(n)} u_*^q(2^{k_*t_*(n)})\|f\|^q_{X_p(\tilde
U_{t_*(n)+1,i})}\right)^{1/q} \le
$$
$$
\le \left(\sum \limits _{t=t_0}^{t_*(n)} {\rm card}\,
T^1_{m^*_t,n,t}
2^{-\mu_*k_*t\frac{pq}{p-q}}u_*^{\frac{pq}{p-q}}(2^{k_*t})
n_t^{-\frac{\delta_* pq}{p-q}} \right)^{\frac 1q-\frac 1p} +
$$
$$
+\left({\rm card}\, \hat J_{t_*(n)+1} \cdot 2^{-\frac{pq
}{p-q}\lambda_*k_*t_*(n)}u_*^{\frac{pq}{p-q}}(2^{k_*t_*(n)})
\right)^{\frac 1q-\frac 1p}\stackrel{(\ref{nu_t_k}),
(\ref{ct1mnt}), (\ref{nt1pq})}{\underset
{\mathfrak{Z}_0}{\lesssim}}
$$
$$
\lesssim \left(\sum \limits _{t=t_0}^{t_*(n)} 2^{\gamma_*k_*t}
\psi_*(2^{k_*t}) [2^{-\mu_*k_*t} u_*(2^{k_*t})]^{\frac{pq}{p-q}}
\bigl[ 2^{N-\gamma_*k_*t-\varepsilon|t-\hat
t(n)|}\psi_*^{-1}(2^{k_*t}) \bigr]^{1-\frac{pq}{p-q}\delta_*}
\right)^{\frac 1q-\frac 1p}+
$$
$$
+2^{-\lambda_*k_*t_*(n)} u_*(2^{k_*t_*(n)}) ({\rm card}\, \hat
J_{t_*(n)+1})^{\frac 1q-\frac 1p}\stackrel{(\ref{bipf4684gn}),
(\ref{2_ks_t})}{\underset{\mathfrak{Z}_0}{\lesssim}}
$$
$$
\lesssim \left(\sum \limits _{t=t_0}^{t_*(n)}
\psi_*^{\frac{pq}{p-q}\delta_*}(2^{k_*t}) \cdot
2^{-(\mu_*-\gamma_*\delta_*)k_*t\frac{pq}{p-q}} \bigl[
2^{N-\varepsilon|t-\hat t(n)|} \bigr]^{1-\frac{pq}{p-q}\delta_*}
u_*^{\frac{pq}{p-q}}(2^{k_*t})\right)^{\frac 1q-\frac 1p}+
$$
$$
+n^{-\mu_*\beta_*+\frac 1q-\frac
1p}u_*(n^{\beta_*}\varphi_*(n))\varphi_*^{-\mu_*}(n).
$$
The further arguments are the same as in the case $p\le q$.

Let $1\le t_*(n)<t_0$. Then
$$
\|f-P^1_nf\|_{Y_q(\Omega)} \underset{\mathfrak{Z}_0}{\lesssim}
2^{-\lambda_*k_*t_0} u_*(2^{k_*t_0}) =2^{-\lambda_*k_*t_0}
u_*(2^{k_*t_0})\left({\rm card}\, \hat J_{t_0}\right)^{\frac
1q-\frac 1p} \stackrel{(\ref{bipf4684gn})}
{\underset{\mathfrak{Z}_0} {\lesssim}}
$$
$$
\lesssim 2^{-\mu_*k_*t_0} u_*(2^{k_*t_0}) \left(2^{\gamma_*k_*t_0}
\psi_*(2^{k_*t_0})\right)^{\frac 1q-\frac 1p} \stackrel
{(\ref{2ll})} {\underset{\mathfrak{Z}_0} {\lesssim}}
$$
$$
\lesssim 2^{-\mu_*k_*t_*(n)} u_*(2^{k_*t_*(n)})
\left(2^{\gamma_*k_*t_*(n)} \psi_*(2^{k_*t_*(n)})\right)^{\frac
1q-\frac 1p} \stackrel{(\ref{2_ks_t})}{\underset{\mathfrak{Z}_0}
{\lesssim}} n^{-\mu_*\beta_*+\frac 1q-\frac
1p}u_*(n^{\beta_*}\varphi_*(n))\varphi_*^{-\mu_*}(n).
$$

{\bf Step 4.} Let us estimate the widths $\vartheta_n$ for $p<q$,
$\hat q>2$. Denote $$t_{**}(n)=\min \{t\in \N:\;
\overline{\nu}_t\ge n^{\hat q/2}\}.$$ Then
\begin{align}
\label{nut_ss} \overline{\nu}_{t_{**}(n)} \underset{\mathfrak
{Z}_0} {\asymp} n^{\hat q/2}, \quad 2^{k_*t_{**}(n)}
\underset{\mathfrak{Z}_0}{\asymp} n^{\beta_* \hat
q/2}\varphi_*(n^{\hat q/2})
\end{align}
(it can be proved similarly as (\ref{2_ks_t})). For sufficiently
large $n\in \N$ we have $t_*(n)+1\le t_{**}(n)-1$. If $t_*(n)+1>
t_{**}(n)-1$, then $n \underset{\mathfrak{Z}_0}{\asymp} n^{\hat
q/2}$. In this case, the desired estimate has been already
obtained at the previous step.

Let $t_*(n)+1\le t_{**}(n)-1$.

{\bf Step 4.1.} Consider $t> t_*(n)$. If $t\ge t_0$, then
$\overline{J}_t=\hat J_t$ by (\ref{ovrl_it_eq_hat_it}). Let
$\tilde P_{t,i}$ be the operator defined at the step 2. We set
\begin{align}
\label{qtf_x} Q_tf(x)=\tilde P_{t,i}f(x) \quad \text{for} \quad
x\in \tilde U_{t,i}, \quad i\in \hat J_t, \quad Q_tf(x)=0\quad
\text{for} \quad x\in \Omega \backslash \tilde U_t.
\end{align}
Notice that if $t<t_0$, then $Q_tf=Q_{t+1}f$.

We claim that
\begin{align}
\label{f_q_t} \begin{array}{c} (f-Q_{t_*(n)+1}f)\chi _{\tilde
U_{t_*(n)+1}}= \sum \limits _{j=t_*(n)+1} ^{t_{**}(n)-1}
(Q_{j+1}f-Q_jf)\chi _{\tilde U_{j+1}} +\\ +\sum \limits
_{t=t_*(n)+1} ^{t_{**}(n)-1} (f-Q_tf) \chi _{G_t}
+(f-Q_{t_{**}(n)}f)\chi _{\tilde U_{t_{**}(n)}}.
\end{array}
\end{align}
Indeed, let $x\in G_t$, $t_*(n)+1\le t\le t_{**}(n)-1$. Then $t\ge
t_0$ (otherwise $G_t=\varnothing$). We have $\chi _{\tilde
U_{t_*(n)+1}}(x)=1$, $\chi _{\tilde U_{t_{**}(n)}}(x)=0$, $\chi
_{\tilde U_{j+1}}(x)=1$ for $j\le t-1$, $\chi _{\tilde
U_{j+1}}(x)=0$ for $j\ge t$. Therefore, the left-hand side of
(\ref{f_q_t}) equals to $f(x)-Q_{t_*(n)+1}f(x)$, as well as the
right-hand side equals to
$$
\sum \limits _{j=t_*(n)+1}^{t-1}
(Q_{j+1}f(x)-Q_jf(x))+f(x)-Q_tf(x) =f(x)-Q_{t_*(n)+1}f(x).
$$
Let $x\in \tilde U_{t_{**}(n)}$. Then $\chi _{\tilde
U_{t_*(n)+1}}(x)=1$, $\chi _{\tilde U_{t_{**}(n)}}(x)=1$, $\chi
_{\tilde U_{j+1}}(x)=1$, $t_*(n)+1\le j\le t_{**}(n)-1$, $\chi
_{G_t}(x)=0$, $t_*(n)+1\le t\le t_{**}(n)-1$. Therefore, the left-hand
side of (\ref{f_q_t}) equals to $f(x)-Q_{t_*(n)+1}f(x)$, and the right-hand
side equals to
$$
\sum \limits _{j=t_*(n)+1} ^{t_{**}(n)-1}
(Q_{j+1}f(x)-Q_jf(x))+(f(x)-Q_{t_{**}(n)}f(x))
=f(x)-Q_{t_*(n)+1}f(x).
$$

If $x\in \Omega \backslash \tilde U_{t_*(n)+1}$, then both parts
of (\ref{f_q_t}) equal to zero.

{\bf Step 4.2.} We set
\begin{align}
\label{m_t_def} m_t=\lceil\log \nu_t\rceil,
\end{align}
$\nu_{t,i}={\rm card}\, {\bf V}({\cal A} _{t,i})$, $i\in \hat
J_t$,
$$
J_{m,t}=\left\{i\in \hat J_t:\; \frac{\nu_{t,i}}{\nu_t}\cdot 2^m
\ge 1\right\}, \quad 0\le m\le m_t,
$$
$$
\overline{m}(i)=\min \{m\in \overline{0, \, m_t}:\; i\in
J_{m,t}\}, \quad i\in \hat J_t
$$
(notice that $J_{m_t,t}=\hat J_t$). Then
$\frac{\nu_{t,i}}{\nu_t}\cdot 2^{\overline{m}(i)}<2$.

We define the function $\Phi_{t,i}:2^{{\bf V}({\cal A}_{t,i})}
\rightarrow \R_+$ by $\Phi_{t,i}({\bf W})={\rm card}\, {\bf W}$,
${\bf W}\subset {\bf V}({\cal A}_{t,i})$. By Lemma
\ref{lemma_o_razb_dereva1}, for any $i\in \hat J_{t}$,
$\overline{m}(i)\le m\le m_t$ there exists a partition $R^i_{m,t}$
of the tree ${\cal A}_{t,i}$ with the following properties:
\begin{enumerate}
\item ${\rm card}\, R^i_{m,t} \underset{\mathfrak{Z}_0}{\lesssim} \left\lceil \frac{\nu_{t,i}}{\nu_t}
\cdot 2^m \right \rceil$;
\item for any tree ${\cal D}\in R^i_{m,t}$
\begin{align}
\label{card_vd} {\rm card}\, {\bf V}({\cal D})
\underset{\mathfrak{Z}_0}{\lesssim} \nu_t \cdot 2^{-m}.
\end{align}
\item for any tree ${\cal D}\in R^i_{m,t}$
$$
{\rm card}\{{\cal D}'\in R^i_{m\pm 1,t}:{\bf V}({\cal D})\cap {\bf
V}({\cal D}')\ne \varnothing\}\underset{\mathfrak{Z}_0}{\lesssim}
1.
$$
\end{enumerate}
Moreover, we may assume that ${\rm card}\,
R^i_{\overline{m}(i),t}=1$.

For $0\le m\le m_t$ we set $G_{m,t} =\cup _{i\in J_{m,t}}
\Omega_{{\cal A}_{t,i}}$,
\begin{align}
\label{t2mt_mlmt_0_def}
T^2_{m,t}=\{\Omega_{{\cal D}}:\, {\cal D}\in R^i_{m,t}, \, i\in
J_{m,t}\}.
\end{align}
Then $T^2_{m,t}$ is a partition of $G_{m,t}$ and
\begin{align}
\label{ct2mt} {\rm card}\, T^2_{m,t}\underset{\mathfrak{Z}_0}
{\lesssim} \frac{2^m\sum \limits _{i\in J_{m,t}}\nu_{t,i}}{\nu_t};
\quad \text{in particular,}\quad {\rm card}\, T^2_{m_t,t}
\stackrel{(\ref{m_t_def})}{\underset{\mathfrak{Z}_0}{\lesssim}}
\nu_t.
\end{align}

By Assumption \ref{sup1} and (\ref{w_s_2}), for any $E\in
T^2_{m,t}$ there exists a linear continuous operator
$P_E:Y_q(E)\rightarrow {\cal P}(E)$ such that
$\|f-Q_tf-P_Ef\|_{Y_q(E)} \underset {\mathfrak{Z}_0} {\lesssim}
2^{-\lambda_*k_*t} u_*(2^{k_*t}) \|f\|_{X_p(E)}$ for any function
$f\in X_p(\Omega)$. Since ${\rm card}\,
R^i_{\overline{m}(i),t}=1$,
$T^2_{\overline{m}(i),t}|_{\Omega_{{\cal
A}_{t,i}}}=\{\Omega_{{\cal A}_{t,i}}\}$. This together with
(\ref{ftp_ti}) and (\ref{qtf_x})  implies that we can take
$P_{\Omega_{{\cal A}_{t,i}}}f=0$. Therefore, for any $m\in \{0, \,
\dots, \, m_t\}$ there exists a linear continuous operator
\begin{align}
\label{p2mt_def_yq} P^2_{m,t}:Y_q(\Omega)\rightarrow {\cal
S}_{T^2_{m,t}}(\Omega)
\end{align}
such that
\begin{align}
\label{p20} P^2_{\overline{m}(i),t}f|_{\Omega_{{\cal A}_{t,i}}}=0,
\quad i\in \hat J_t, \quad f\in X_p(\Omega),
\end{align}
\begin{align}
\label{fp2mt} \|f-Q_tf-P^2_{m,t}f\|_{p,q,T^2_{m,t}}
\underset{\mathfrak{Z}_0}{\lesssim}
2^{-\lambda_*k_*t}u_*(2^{k_*t})\|f\|_{X_p(G_{m,t})}.
\end{align}

Finally, for any $E\in T^2_{m,t}$
\begin{align}
\label{cet2} {\rm card}\, \{E'\in T^2_{m\pm 1,t}:\; {\rm mes}(E\cap
E')>0\} \underset{\mathfrak{Z}_0}{\lesssim} 1.
\end{align}

If $x\in \Omega_{{\cal A}_{t,i}}$, then by the definition of $G_{m,t}$
$$
\sum \limits_{m=0}^{m_t-1} (P^2_{m+1,t}f(x)- P^2_{m,t}f(x))
\chi_{G_{m,t}}(x)= \sum \limits _{m=\overline{m}(i)}^{m_t-1}
(P^2_{m+1,t}f(x)- P^2_{m,t}f(x))\stackrel{(\ref{p20})}{=}
P^2_{m_t,t}f(x).
$$
Hence,
$$
(f-Q_tf)\chi_{G_t} = \sum \limits _{m=0}^{m_t-1} (P^2_{m+1,t}f
-P^2_{m,t}f)\chi_{G_{m,t}} +(f-Q_tf- P^2_{m_t,t} f)\chi_{G_t}.
$$
This together with (\ref{f_q_t}) yields
\begin{align}
\label{fqt2} \begin{array}{c} (f-Q_{t_*(n)+1}f)\chi _{\tilde
U_{t_*(n)+1}}= \sum \limits _{j=t_*(n)+1} ^{t_{**}(n)-1}
(Q_{j+1}f-Q_jf)\chi_{\tilde U_{j+1}} +\\+\sum \limits
_{t=t_*(n)+1} ^{t_{**}(n)-1} \sum \limits _{m=0}^{m_t-1}
(P^2_{m+1,t}f-P^2_{m,t}f)\chi_{G_{m,t}} +\\+\sum \limits
_{t=t_*(n)+1} ^{t_{**}(n)-1} (f-Q_tf-P^2_{m_t,t}f)\chi
_{G_t}+(f-Q_{t_{**}(n)}f)\chi _{\tilde U_{t_{**}(n)}}.
\end{array}
\end{align}

{\bf Step 4.3.} Construct the partition $T^2_{m,t}$ of the set
$G_t$ for $m>m_t$. Let ${\cal D}\in R^i_{m_t,t}$ for some $i\in
\hat J_t$. From (\ref{m_t_def}) and (\ref{card_vd}) it follows
that
\begin{align}
\label{cvd_1} {\rm card}\, {\bf V}({\cal
D})\underset{\mathfrak{Z}_0}{\lesssim} 1.
\end{align}
For each $\xi\in {\bf V}(\cal D)$ we define the partition
$T_{m,t,\xi}:=T_{m-m_t,1}(\hat F(\xi))$ and the operator $P_E$
according to Assumption \ref{sup2}. Recall that $J_{m_t,t}=\hat
J_t$. We set
$$
T^2_{m,t}=\{E\in T_{m,t,\xi}:\; \xi\in {\bf V}({\cal D}), \; {\cal
D}\in R^i_{m_t,t}, \; i\in \hat J_t\},
$$
$P^2_{m,t}f|_E=P_Ef-(Q_tf)|_E$, $E\in T^2_{m,t}$,
$P^2_{m,t}f|_{\Omega \backslash G_t}=0$. Then
\begin{align}
\label{p2mt_s_gt}  \cup _{E\in T^2_{m,t}}E=G_t, \quad P^2_{m,t}:
Y_q(\Omega) \rightarrow {\cal S}_{T^2_{m,t}}(\Omega) \quad
\text{is a linear continuous operator};
\end{align}
by Assumption \ref{sup2}, (\ref{nu_t_k}), (\ref{t2mt_mlmt_0_def}),
(\ref{ct2mt}) and (\ref{cvd_1}),
\begin{align}
\label{ct2_mt} {\rm card}\, T^2_{m,t}\underset{\mathfrak{Z}_0}
{\lesssim} \overline{\nu}_t\cdot 2^{m-m_t}.
\end{align}
Further,
\begin{align}
\label{fp2fmt} \|f-Q_tf-P^2_{m,t}f\|_{p,q,T^2_{m,t}}
\stackrel{(\ref{fpef}),(\ref{mu_ge_lambda}), (\ref{til_w_s_2})}
{\underset{\mathfrak{Z}_0}{\lesssim}}
2^{-\lambda_*k_*t}u_*(2^{k_*t})\cdot
2^{-\delta_*(m-m_t)}\|f\|_{X_p(G_t)},
\end{align}
and for any $E\in T^2_{m,t}$
\begin{align}
\label{cest2} {\rm card}\, \{E'\in T^2_{m\pm 1,t}:\, {\rm mes}(E
\cap E')>0\} \stackrel{(\ref{ceptm})} {\underset{\mathfrak{Z}_0}
{\lesssim}} 1.
\end{align}
From (\ref{fqt2}) we get
\begin{align}
\label{fqt3} \begin{array}{c} (f-Q_{t_*(n)+1}f)\chi _{\tilde
U_{t_*(n)+1}}= \sum \limits _{j=t_*(n)+1} ^{t_{**}(n)-1}
(Q_{j+1}f-Q_jf)\chi _{\tilde U_{j+1}} +\\+\sum \limits
_{t=t_*(n)+1} ^{t_{**}(n)-1} \sum \limits _{m=0}^{m_t-1}
(P^2_{m+1,t}f-P^2_{m,t}f)\chi _{G_{m,t}}+\\+\sum \limits
_{t=t_*(n)+1} ^{t_{**}(n)-1} \sum \limits _{m=m_t}^\infty
(P^2_{m+1,t}f-P^2_{m,t}f) +(f-Q_{t_{**}(n)}f)\chi _{\tilde
U_{t_{**}(n)}}
\end{array}
\end{align}
(by (\ref{fp2fmt}), the series converges in $Y_q(\Omega)$).

{\bf Step 5.} Let us obtain estimates of $\vartheta_n$. Define the
partition $T_n$ by $T_n|_{G_t}=T^1_{m^*_t,n,t}$, $t_0\le t\le
t_*(n)$, $T_n|_{\tilde U_{t_*(n)+1,i}}=\{\tilde U_{t_*(n)+1,i}\}$.
If $t_0>t_*(n)$, then $T_n=\{\Omega\}$. If $t_0\le t_*(n)$, then
\begin{align}
\label{card_tn} \begin{array}{c} \displaystyle {\rm card}\, T_n
\stackrel{(\ref{nu_t_k}),(\ref{ovrl_it_eq_hat_it}),(\ref{2_ks_t}),
(\ref{nt_eq_12mt_eq_2ngkt}), (\ref{ct1mnt})}
{\underset{\mathfrak{Z}_0}{\lesssim}} n+\sum \limits _{t_0\le t\le
t_*(n),\, m_t^*=0} n \cdot 2^{-\varepsilon|t-\hat t(n)|} +\\+\sum
\limits _{t_0\le t\le t_*(n),\; m_t^*>0} \nu_t
\stackrel{(\ref{nu_t_k}),(\ref{2_ks_t})}{\underset
{\varepsilon,\mathfrak{Z}_0}{\lesssim}} n. \end{array}
\end{align}
Set $S_nf|_{G_t}= P^1_{m^*_t,n,t}f|_{G_t}$ for $t_0\le t\le
t_*(n)$, $S_nf|_{\tilde U_{t_*(n)+1}}=Q_{t_*(n)+1}f$. This
together with (\ref{fp1ft}), (\ref{fqt3}) yields that
\begin{align}
\label{sntn} S_nf\in {\cal S}_{T_n}(\Omega),
\end{align}
$f\mapsto S_nf$ is a linear continuous operator in $Y_q(\Omega)$
and
\begin{align}
\label{razl_f} \displaystyle \begin{array}{c} f-S_nf= \sum \limits
_{t=t_0}^{t_*(n)} \sum \limits _{m\ge m^*_t} (P^1_{m+1,n,t}f
-P^1_{m,n,t}f)+\\+ \sum \limits _{t=t_*(n)+1} ^{t_{**}(n)-1}
(Q_{t+1}f-Q_tf)\chi_{\tilde U_{t+1}} + \sum \limits _{t=t_*(n)+1}
^{t_{**}(n)-1} \sum \limits _{m=0}^{m_t-1}
(P^2_{m+1,t}f-P^2_{m,t}f)\chi _{G_{m,t}} +\\+ \sum \limits
_{t=t_*(n)+1} ^{t_{**}(n)-1} \sum \limits _{m=m_t}^\infty
(P^2_{m+1,t}f-P^2_{m,t}f)+(f-Q_{t_{**}(n)}f)\chi _{\tilde
U_{t_{**}(n)}}.
\end{array}
\end{align}
Denote by $\hat T^1_{m,n,t}$ be the partition of $G_t$ that
consists of the sets $E'\cap E''$, $E'\in T^1_{m,n,t}$, $E''\in
T^1_{m+1,n,t}$, ${\rm mes}(E'\cap E'')>0$, $t_0\le t\le t_*(n)$,
$m\ge m^*_t$. Let $\hat T^2_{m,t}$ be the partition of $G_{m,t}$
that consists of the sets $E'\cap E''$, $E'\in T^2_{m,t}$, $E''\in
T^2_{m+1,t}$, ${\rm mes}(E'\cap E'')>0$, $t_*(n)+1\le t\le
t_{**}(n)-1$, $m\ge 0$ ($G_{m,t}:=G_t$ for $m\ge m_t$; see
(\ref{p2mt_s_gt})). Let $\hat T^3_t=\{\tilde U_{t+1,i}\}_{i\in
\overline{J}_{t+1}}$, $t_*(n)+1\le t\le t_{**}(n)-1$. Then
\begin{align}
\label{pitm_st} P^1_{m+1,n,t}f-P^1_{m,n,t}f
\stackrel{(\ref{p1mnt})}{\in} {\cal S} _{\hat
T^1_{m,n,t}}(\Omega), \quad (P^2_{m+1,t}f-P^2_{m,t}f)\chi
_{G_{m,t}} \stackrel{(\ref{p2mt_def_yq}),(\ref{p2mt_s_gt})}{\in}
{\cal S} _{\hat T^2_{m,t}}(\Omega),
\end{align}
\begin{align}
\label{qtf} (Q_{t+1}f-Q_tf)\chi _{\tilde U_{t+1}}
\stackrel{(\ref{qtf_x})}{\in} {\cal S} _{\hat T^3_t}(\Omega),
\end{align}
for any function $f\in BX_p(\Omega)$
\begin{align}
\label{pp1} \|P^1_{m+1,n,t}f-P^1_{m,n,t}f\|_{p,q,\hat T^1_{m,n,t}}
\stackrel{(\ref{mu_ge_lambda}),(\ref{fp1ft}),(\ref{card_e})}
{\underset{\mathfrak{Z}_0}{\lesssim}}
2^{-\lambda_*k_*t}u_*(2^{k_*t}) (2^{m-m_t^*}n_t)^{-\delta_*},
\end{align}
\begin{align}
\label{pp21} \|P^2_{m+1,t}f-P^2_{m,t}f\|_{p,q,\hat T^2_{m,t}}
\stackrel{(\ref{fp2mt}),(\ref{cet2})}{\underset{\mathfrak{Z}_0}
{\lesssim}} 2^{-\lambda_*k_*t}u_*(2^{k_*t}), \quad m< m_t,
\end{align}
\begin{align}
\label{pp22} \|P^2_{m+1,t}f-P^2_{m,t}f\|_{p,q,\hat T^2_{m,t}}
\stackrel{(\ref{fp2fmt}),(\ref{cest2})} {\underset{\mathfrak{Z}_0}
{\lesssim}} 2^{-\lambda_*k_*t}u_*(2^{k_*t})\cdot
2^{-\delta_*(m-m_t)}, \quad m\ge m_t,
\end{align}
\begin{align}
\label{pp3} \|Q_{t+1}f-Q_tf\| _{p,q, \hat T^3_t}
\stackrel{(\ref{ftp_ti}),(\ref{qtf_x})}{\underset{\mathfrak{Z}_0}
{\lesssim}} 2^{-\lambda_*k_*t}u_*(2^{k_*t}),
\end{align}
\begin{align}
\label{pp3_11} \|f-Q_tf\| _{Y_q(\tilde U_t)}
\stackrel{(\ref{ftp_ti}),(\ref{qtf_x})}{\underset{\mathfrak{Z}_0}
{\lesssim}} 2^{-\lambda_*k_*t}u_*(2^{k_*t}),
\end{align}
and there exists $C=C(\mathfrak{Z}_0)>0$ such that
\begin{align}
\label{dim_stmn1}  s^1_{m,n,t}:=\dim {\cal S}_{\hat
T^1_{m,n,t}}(\Omega) \stackrel{(\ref{ct1mnt}),(\ref{card_e})}{\le}
C\cdot 2^{m-\varepsilon|t-\hat t(n)|}n, \quad t_0\le t\le t_*(n),
\; \; m\ge m^*_t,
\end{align}
\begin{align}
\label{dim_stmn12} s^2_{m,t}:=\dim {\cal S}_{\hat
T^2_{m,t}}(\Omega)\stackrel{(\ref{ct2mt}),(\ref{cet2})}{\le} C\cdot 2^m, \quad
t_*(n)+1\le t\le t_{**}(n)-1, \;\; m\le m_t,
\end{align}
\begin{align}
\label{dim_stmn22} s^2_{m,t}:=\dim {\cal S}_{\hat
T^2_{m,t}}(\Omega) \stackrel{(\ref{ct2_mt}),(\ref{cest2})}{\le}
C\cdot 2^{m-m_t}\overline{\nu}_t, \quad t_*(n)+1\le t\le
t_{**}(n)-1, \;\; m> m_t,
\end{align}
\begin{align}
\label{ct3} s^3_t:=\dim {\cal S}_{\hat T^3_t}(\Omega)
\stackrel{(\ref{nu_t_k}),(\ref{ovrl_it_eq_hat_it})}{\le} C\cdot
2^{\gamma_*k_*t} \psi_*(2^{k_*t}), \quad t_*(n)+1\le t\le
t_{**}(n)-1.
\end{align}

By Lemma \ref{oper_a}, there exist linear isomrphisms
$A^1_{m,n,t}:{\cal S} _{\hat T^1_{m,n,t}}(\Omega) \rightarrow
\R^{s^1_{m,n,t}}$, $A^2_{m,t}:{\cal S} _{\hat T^2_{m,t}}(\Omega)
\rightarrow \R^{s^2_{m,t}}$, $A^3_t:{\cal S} _{\hat T^3_t}(\Omega)
\rightarrow \R^{s^3_t}$ such that
\begin{align}
\label{a1} \|A^1_{m,n,t}\|_{Y_{p,q,\hat T^1_{m,n,t}}(G_t)
\rightarrow l_p^{s^1_{m,n,t}}} \underset{\mathfrak{Z}_0}{\lesssim}
1, \quad \|(A^1_{m,n,t}) ^{-1}\| _{l_q^{s^1_{m,n,t}}\rightarrow
Y_q(G_t)} \underset{\mathfrak{Z}_0}{\lesssim} 1,
\end{align}
\begin{align}
\label{a2} \|A^2_{m,t}\|_{Y_{p,q,\hat T^2_{m,t}}(G_{m,t}) \rightarrow
l_p^{s^2_{m,t}}} \underset{\mathfrak{Z}_0}{\lesssim} 1, \quad
\|(A^2_{m,t}) ^{-1}\| _{l_q^{s^2_{m,t}}\rightarrow Y_q(G_{m,t})}
\underset{\mathfrak{Z}_0}{\lesssim} 1,
\end{align}
\begin{align}
\label{a3} \|A^3_t\|_{Y_{p,q,\hat T^3_t}(\tilde U_{t+1})
\rightarrow l_p^{s^3_t}} \underset{\mathfrak{Z}_0}{\lesssim} 1,
\quad \|(A^3_t) ^{-1}\| _{l_q^{s^3_t}\rightarrow Y_q(\tilde
U_{t+1})} \underset{\mathfrak{Z}_0}{\lesssim} 1.
\end{align}
Given $k^1_{m,n,t}\in \Z_+$, let $E^1_{m,n,t}:\R^{s^1_{m,n,t}}
\rightarrow \R^{s^1_{m,n,t}}$ be the extremal mapping for
$\gamma^1_{m,n,t}:=\vartheta _{k^1_{m,n,t}}(B_p^{s^1_{m,n,t}}, \,
l_q^{s^1_{m,n,t}})$ ($t_0\le t\le t_*(n)$, $m\ge m^*_t$); given
$k^2_{m,t}\in \Z_+$, let $E^2_{m,t}:\R^{s^2_{m,t}} \rightarrow
\R^{s^2_{m,t}}$ be the extremal mapping for $\gamma^2_{m,t}
:=\vartheta _{k^2_{m,t}}(B_p^{s^2_{m,t}}, \, l_q^{s^2_{m,t}})$
($t_*(n)<t<t_{**}(n)$, $m\in \Z_+$); given $k^3_t\in \Z_+$, let
$E^3_t:\R^{s^3_t} \rightarrow \R^{s^3_t}$ be the extremal mapping
for $\gamma^3_t :=\vartheta_{k^3_t}(B_p^{s^3_t}, \, l_q^{s^3_t})$
($t_*(n)<t<t_{**}(n)$). Suppose that
\begin{align}
\label{sll} \sum \limits _{t=t_0}^{t_*(n)} \sum \limits _{m\ge
m_t^*} k^1_{m,n,t} + \sum \limits _{t=t_*(n)+1}^{t_{**}(n)-1} \sum
\limits _{m\in \Z_+} k^2_{m,t} +\sum \limits
_{t=t_*(n)+1}^{t_{**}(n)-1} k^3_t \le C_1n,
\end{align}
where $C_1>0$ does not depend on $n$.

Denote
$$
P_*f= S_nf+\sum \limits _{t=t_0}^{t_*(n)} \sum \limits _{m\ge
m_t^*}(A^1_{m,n,t})^{-1}E^1_{m,n,t}A^1_{m,n,t}(P^1_{m+1,n,t}f-P^1_{m,n,t}f)+
$$
$$
+\sum \limits _{t=t_*(n)+1}^{t_{**}(n)-1}\sum \limits _{m\in \Z_+}
(A^2_{m,t})^{-1}E^2_{m,t}A^2_{m,t}[(P^2_{m+1,t}f-P^2_{m,t}f)\chi
_{G_{m,t}}]+
$$
$$
+\sum \limits _{t=t_*(n)+1}^{t_{**}(n)-1} (A^3_t)^{-1} E^3_t
A^3_t[(Q_{t+1}f-Q_tf)\chi_{\tilde U_{t+1}}].
$$
The image of $P_*$ is contained in the subspace $Z\subset
Y_q(\Omega)$ of dimension
$\overline{k}_n\stackrel{(\ref{card_tn}), (\ref{sntn}),
(\ref{sll})}{\underset{\mathfrak{Z}_0,\varepsilon,C_1} {\lesssim}}
n$. Observe that if the mappings $E^1_{m,n,t}$, $E^2_{m,t}$ and
$E^3_t$ are linear, then $P_*$ is a linear continuous operator in
$Y_q(\Omega)$.

In order to estimate Kolmogorov and linear widths, it is sufficient to
obtain an upper bound of $\|f-P_*f\| _{Y_q(\Omega)}$ for $f\in BX_p(\Omega)$.
In estimating Gelfand widths, we obtain an upper bound of $\|f\|_{Y_q(\Omega)}$
for $f\in BX_p(\Omega)$ such that
\begin{align}
\label{snf} S_nf=0, \quad E^1_{m,n,t}A^1_{m,n,t} (P^1_{m+1,n,t}f-
P^1_{m,n,t}f)=0, \quad t_0\le t\le t_*(n), \; m\ge m^*_t,
\end{align}
\begin{align}
\label{e2mt} E^2_{m,t}A^2_{m,t} [(P^2_{m+1,t}f
-P^2_{m,t}f)\chi_{G_{m,t}}]=0, \quad t_*(n)<t<t_{**}(n), \; m\in
\Z_+,
\end{align}
\begin{align}
\label{e3t} E^3_tA^3_t [(Q_{t+1}f -Q_tf)\chi_{\tilde U_{t+1}}]=0,
\quad t_*(n)<t<t_{**}(n),
\end{align}
and take into account (\ref{card_tn}), (\ref{sntn}) and
(\ref{sll}) (indeed, the subspace of functions satisfying
(\ref{snf}), (\ref{e2mt}) and (\ref{e3t}) is closed in
$Y_q(\Omega)$ and its codimension equals to
$C_2n\underset{\mathfrak{Z}_0,\varepsilon,C_1} {\lesssim} n$).

We set
\begin{align}
\label{til_s1} \tilde s^1_{m,n,t}=\lceil C\cdot
2^{m-\varepsilon|t-\hat t(n)|}n\rceil, \quad \quad t_0\le t\le
t_*(n), \; \; m\ge m^*_t,
\end{align}
\begin{align}
\label{til_s21} \tilde s^2_{m,t}=\lceil C\cdot 2^m\rceil, \quad
\quad t_*(n)+1\le t\le t_{**}(n)-1, \;\; m\le m_t,
\end{align}
\begin{align}
\label{til_s22} \tilde s^2_{m,t}=\lceil C\cdot
2^{m-m_t}\overline{\nu}_t\rceil, \quad \quad t_*(n)+1\le t\le
t_{**}(n)-1, \;\; m> m_t,
\end{align}
\begin{align}
\label{til_s3} \tilde s^3_t=\lceil C\cdot 2^{\gamma_*k_*t}
\psi_*(2^{k_*t})\rceil, \quad \quad t_*(n)+1\le t\le t_{**}(n)-1.
\end{align}

Let $\tilde\gamma ^1_{m,n,t}:= \vartheta_{k^1_{m,n,t}}
(B_p^{\tilde s^1_{m,n,t}}, \, l_q^{\tilde s^1_{m,n,t}})$ ($t_0\le
t\le t_*(n)$, $m\ge m_t^*$), $\tilde\gamma ^2_{m,t} :=
\vartheta_{k^2_{m,t}}(B_p^{\tilde s^2_{m,t}}, \, l_q^{\tilde
s^2_{m,t}})$ ($t_*(n)<t<t_{**}(n)$, $m\in \Z_+$), $\tilde\gamma
^3_t := \vartheta_{k^3_t}(B_p^{\tilde s^3_t}, \, l_q^{\tilde
s^3_t})$ ($t_*(n)<t<t_{**}(n)$). Then
\begin{align}
\label{gg} \gamma ^1_{m,n,t}\stackrel {(\ref{dim_stmn1}),
(\ref{til_s1})} {\le} \tilde\gamma ^1_{m,n,t}, \quad t_0\le t\le
t_*(n), \;\; m\ge m_t^*,
\end{align}
\begin{align}
\label{gg2} \gamma ^2_{m,t} \stackrel{(\ref{dim_stmn12}),
(\ref{dim_stmn22}), (\ref{til_s21}), (\ref{til_s22})}
{\le}\tilde\gamma ^2_{m,t}, \quad t_*(n)<t<t_{**}(n), \quad m\in
\Z_+,
\end{align}
\begin{align}
\label{gg3} \gamma^3_t \stackrel{(\ref{ct3}), (\ref{til_s3})}{\le
} \tilde \gamma^3_t, \quad t_*(n)<t<t_{**}(n).
\end{align}
Let $f\in X_p(\Omega)$, $\|f\|_{X_p(\Omega)}\le 1$. In estimating
Gelfand widths, we suppose that
(\ref{snf}), (\ref{e2mt}) and (\ref{e3t}) hold (in particular,
$P_*f=0$).

From the definition of $P_*f$, (\ref{razl_f}), (\ref{pp1}),
(\ref{pp21}), (\ref{pp22}), (\ref{pp3}), (\ref{pp3_11}),
(\ref{a1}), (\ref{a2}), (\ref{a3}), (\ref{gg}), (\ref{gg2}) and
(\ref{gg3}) it follows that
$$
\| f-P_*f\| _{Y_q(\Omega)} \underset{\mathfrak{Z}_0}{\lesssim}
2^{-\lambda_*k_*t_{**}(n)}u_*(2^{k_*t_{**}(n)}) +
$$
$$
+\sum \limits _{t=t_0}^{t_*(n)} \sum \limits _{m\ge m_t^*}
2^{-\lambda_*k_*t}u_*(2^{k_*t}) (2^{m-m_t^*}n_t) ^{-\delta _*}
\tilde \gamma^1_{m,n,t}+ \sum \limits _{t=t_*(n)+1}^{t_{**}(n)-1}
\sum \limits _{m=0}^{m_t-1} 2^{-\lambda_*k_*t}u_*(2^{k_*t}) \tilde
\gamma^2_{m,t} +
$$
$$
+\sum \limits _{t=t_*(n)+1}^{t_{**}(n)-1} \sum \limits
_{m=m_t}^\infty 2^{-\lambda_*k_*t}u_*(2^{k_*t}) \cdot
2^{-\delta_*(m-m_t)} \tilde \gamma^2_{m,t} +
$$
$$
+ \sum \limits_{t=t_*(n)+1}^{t_{**}(n)-1}
2^{-\lambda_*k_*t}u_*(2^{k_*t}) \tilde \gamma^3_t
\stackrel{(\ref{2mmtnut}),(\ref{nut_ss})}{\underset{\mathfrak{Z}_0}{\lesssim}}
n^{-\frac{\lambda_*\beta_*\hat q}{2}}\varphi_*^{-\lambda_*}(n^{\frac{\hat
q}{2}})u_*(n^{\beta_*\hat
q/2}\varphi_*(n^{\hat q/2}))+
$$
$$
+ \sum \limits _{t=t_0}^{t_*(n)} \sum \limits _{m\ge m_t^*}
2^{-\lambda_*k_*t} u_*(2^{k_*t}) \cdot 2^{-m\delta_*}\cdot
n^{-\delta_*}\cdot 2^{\gamma_*\delta_*k_*t+\varepsilon
\delta_*|t-\hat t(n)|}\psi_*^{\delta_*}(2^{k_*t})\tilde \gamma
^1_{m,n,t}+
$$
$$
+\sum \limits _{t=t_*(n)+1}^{t_{**}(n)-1} \sum \limits
_{m=0}^{m_t-1} 2^{-\lambda_*k_*t} u_*(2^{k_*t}) \tilde
\gamma^2_{m,t} +
$$
$$
+\sum \limits _{t=t_*(n)+1}^{t_{**}(n)-1} \sum
\limits _{m=m_t}^\infty 2^{-\lambda_*k_*t} u_*(2^{k_*t}) \cdot
2^{-\delta_*(m-m_t)} \tilde \gamma^2_{m,t} + \sum \limits
_{t=t_*(n)+1}^{t_{**}(n)-1} 2^{-\lambda_*k_*t} u_*(2^{k_*t})
\tilde \gamma^3_t.
$$
Similarly as in \cite{vas_bes} it can be proved that there are
$k^1_{m,n,t}$ and $k^2_{m,t}$ such that
$$
\sum \limits _{t=t_0}^{t_*(n)} \sum \limits _{m\ge m_t^*}
k^1_{m,n,t}+ \sum \limits _{t=t_*(n)+1}^{t_{**}(n)-1} \sum \limits
_{m=m_t}^\infty
k^2_{m,t}\underset{\mathfrak{Z}_0,\varepsilon}{\lesssim} n,
$$
$$
\sum \limits _{t=t_0}^{t_*(n)} \sum \limits _{m\ge m_t^*}
2^{-\lambda_*k_*t} u_*(2^{k_*t}) \cdot 2^{-m\delta_*}\cdot
n^{-\delta_*}\cdot 2^{\gamma_*\delta_*k_*t+\varepsilon
\delta_*|t-\hat t(n)|}\psi_*^{\delta_*}(2^{k_*t})\tilde \gamma
_{m,n,t}^1+
$$
$$
+\sum \limits _{t=t_*(n)+1}^{t_{**}(n)-1} \sum \limits
_{m=m_t}^\infty 2^{-\lambda_*k_*t}u_*(2^{k_*t}) \cdot
2^{-\delta_*(m-m_t)} \tilde \gamma^2_{m,t}
\underset{\mathfrak{Z}_0,\varepsilon}{\lesssim}
n^{-\theta_{j_*}}\sigma_{j_*}(n).
$$
At this step we choose $\hat t(n)$ and an upper bound for
$\varepsilon$.

Thus, it remains to obtain sequences $k^2_{m,t}$
($t_*(n)<t<t_{**}(n)$, $m< m_t$) and $k^3_t$
($t_*(n)<t<t_{**}(n)$) such that
\begin{align}
\label{s_l_ttn1_l2mt_l3t} \sum \limits _{t=t_*(n)+1}^{t_{**}(n)-1}
\sum \limits _{m=0}^{m_t-1} k^2_{m,t}\underset{\varepsilon}
{\lesssim} n, \quad \sum \limits _{t=t_*(n)+1}^{t_{**}(n)-1} k^3_t
\underset{\varepsilon}{\lesssim} n,
\end{align}
\begin{align}
\label{sl_ttsn1} \sum \limits _{t=t_*(n)+1}^{t_{**}(n)-1} \sum
\limits _{m=0}^{m_t-1} 2^{-\lambda_*k_*t} u_*(2^{k_*t}) \tilde
\gamma^2_{m,t}\underset{\varepsilon,\mathfrak{Z}_0}{\lesssim}
n^{-\theta_{j_*}}\sigma_{j_*}(n),
\end{align}
\begin{align}
\label{2kstg3ttj} \sum \limits _{t=t_*(n)+1}^{t_{**}(n)-1}
2^{-\lambda_*k_*t} u_*(2^{k_*t}) \tilde \gamma^3_t
\underset{\varepsilon, \mathfrak{Z}_0}{\lesssim}
n^{-\theta_{j_*}}\sigma _{j_*}(n).
\end{align}

Let $t^1_n=t_*(n)+1$ or $t_n^1=t_{**}(n)-1$ (the choice will be
made later). For $m\le m_t$ we set
$$
k^2_{m,t}=\left\{ \begin{array}{l} \left\lceil n\cdot
2^{-\varepsilon |t-t^1_n|
-\varepsilon(m_t-m)}\right\rceil , \quad \text{if}\;\;
\left\lceil n\cdot 2^{-\varepsilon |t-t^1_n|
-\varepsilon(m_t-m)}\right\rceil \le \frac 12 \tilde s^2_{m,t}, \\
\tilde s^2_{m,t}, \quad \text{otherwise},\end{array}\right .
$$
$$
k^3_t=\left\{ \begin{array}{l} \left\lceil n\cdot 2^{-\varepsilon
|t-t^1_n|}\right\rceil, \quad \text{if}\;\; \left\lceil n\cdot
2^{-\varepsilon |t-t^1_n|}\right\rceil \le \frac 12 \tilde s^3_t, \\
\tilde s^3_t, \quad \text{otherwise}.\end{array}\right .
$$
Then, for sufficiently small $\varepsilon>0$, we have $ \left\lceil
n\cdot 2^{-\varepsilon |t-t^1_n| -\varepsilon(m_t-m)}\right\rceil
\asymp   n\cdot 2^{-\varepsilon |t-t^1_n| -\varepsilon(m_t-m)}$, $
\left\lceil n\cdot 2^{-\varepsilon |t-t^1_n|}\right\rceil \asymp
n\cdot 2^{-\varepsilon |t-t^1_n|}$, which implies
(\ref{s_l_ttn1_l2mt_l3t}). Apply Theorem \ref{glus_trm}.
Set $\lambda_{pq}=\min \left\{\frac{\frac 1p-\frac 1q}{\frac
12-\frac {1}{\hat q}}, \, 1\right\}$. Then for small
$\varepsilon>0$
$$
\sum \limits _{t=t_*(n)+1}^{t_{**}(n)-1} \sum \limits
_{m=0}^{m_t-1} 2^{-\lambda_*k_*t}u_*(2^{k_*t}) \tilde
\gamma^2_{m,t}\stackrel{(\ref{til_s21})}{\underset{\mathfrak{Z}
_0}{\lesssim}}
$$
$$
\lesssim \sum \limits _{t=t_*(n)+1}^{t_{**}(n)-1} \sum \limits
_{m=0}^{m_t-1} 2^{-\lambda_*k_*t} u_*(2^{k_*t})\cdot
2^{\frac{\lambda_{pq}m}{\hat q}}\cdot n^{-\frac
{\lambda_{pq}}{2}}\cdot 2^{\frac
{\lambda_{pq}\varepsilon}{2}(|t-t^1_n|+m_t-m)}
\underset{\mathfrak{Z} _0}{\lesssim}
$$
$$
\lesssim \sum \limits _{t=t_*(n)+1}^{t_{**}(n)-1}
2^{-\lambda_*k_*t} u_*(2^{k_*t})\cdot
2^{\frac{\lambda_{pq}m_t}{\hat q}}\cdot n^{-\frac
{\lambda_{pq}}{2}}\cdot 2^{\frac{\lambda_{pq}\varepsilon}{2}
|t-t^1_n|} \stackrel{(\ref{nu_t_k}), (\ref{m_t_def})}
{\underset{\mathfrak{Z} _0}{\lesssim}}
$$
$$
\lesssim\sum \limits _{t=t_*(n)+1}^{t_{**}(n)-1}2^{-\lambda_*k_*t}
u_*(2^{k_*t})\cdot \left(2^{\gamma_*k_*t}\psi_*(2^{k_*t})\right)
^\frac{\lambda_{pq}}{\hat q} \cdot n^{-\frac
{\lambda_{pq}}{2}}\cdot 2^{\frac {\lambda_{pq}\varepsilon}{2}
|t-t^1_n|}=:S,
$$
$$
\sum \limits _{t=t_*(n)+1}^{t_{**}(n)-1} 2^{-\lambda_*k_*t}
u_*(2^{k_*t}) \tilde \gamma^3_t \stackrel{(\ref{til_s3})}
{\underset{\mathfrak{Z}_0}{\lesssim}}
$$
$$
\lesssim \sum \limits _{t=t_*(n)+1} ^{t_{**}(n)-1}
2^{-\lambda_*k_*t} u_*(2^{k_*t})\cdot
\left(2^{\gamma_*k_*t}\psi_*(2^{k_*t})\right)^\frac{\lambda_{pq}}{\hat
q} \cdot n^{-\frac {\lambda_{pq}}{2}}\cdot
2^{\frac{\lambda_{pq}\varepsilon}{2} |t-t^1_n|}=S.
$$
We set $t^1_n=t_*(n)+1$ if $\frac{\gamma_*\lambda_{pq}}{\hat
q}-\lambda_*\le 0$ (i.e., $\lambda_*\beta_*\ge
\frac{\lambda_{pq}}{\hat q}$), and we take $t^1_n=t_{**}(n)-1$ if
$\frac{\lambda_{pq}\gamma_*}{\hat q}-\lambda _*> 0$ (i.e.,
$\lambda_* \beta_*< \frac{\lambda_{pq}}{\hat q}$). Apply Lemma
\ref{sum_lem}. If $\lambda_*\beta_*>\frac{\lambda_{pq}}{\hat
q}$, then for sufficiently small $\varepsilon>0$
$$
S\underset{\mathfrak{Z}_0,\varepsilon}{\lesssim}
2^{-\lambda_*k_*t_*(n)} u_*(2^{k_*t_*(n)})
\left(2^{\gamma_*k_*t_*(n)} \psi_*(2^{k_*t_*(n)})
\right)^\frac{\lambda_{pq}}{\hat q} \cdot n^{-\frac{\lambda
_{pq}}{2}} \stackrel{(\ref{nu_t_k}),(\ref{2_ks_t})} {\underset{\mathfrak{Z}_0}
{\lesssim}}$$$$\lesssim n^{-\lambda_*\beta_*-\min\left\{\frac 1p-\frac 1q, \,
\frac 12-\frac{1}{\hat q}\right\}} \sigma_3(n);
$$
if $\lambda_*\beta_*< \frac{\lambda_{pq}}{\hat q}$, then for small
$\varepsilon>0$
$$
S\underset{\mathfrak{Z}_0,\varepsilon }{\lesssim}
2^{-\lambda_*k_*t_{**}(n)} u_*(2^{k_*t_{**}(n)})
\left(2^{\gamma_*k_*t_{**}(n)} \psi_*(2^{k_*t_{**}(n)})\right)
^\frac{\lambda_{pq}}{\hat q} \cdot n^{-\frac {\lambda_{pq}}{2}}
\stackrel{(\ref{nut_ss})}{\underset{\mathfrak{Z}_0}{\lesssim}}
n^{-\frac{\hat q \lambda_*\beta_*}{2}} \sigma_4(n).
$$
If $\lambda_*\beta_*= \frac{\lambda_{pq}}{\hat q}$, then $\theta_3=
\theta_4$, and by the conditions of the theorem  we have $j_*\in \{1, \, 2\}$. By
(\ref{nut_ss}) and Lemma \ref{sum_lem}, there exists $c=
c(\mathfrak{Z}_0)>0$ such that $t_{**}(n)\le c(1+\log n)$.
Therefore, there is $\tilde c=\tilde c(\mathfrak{Z}_0)>0$ such that
for any $\varepsilon >0$
$$
S\underset{\varepsilon,\mathfrak{Z}_0}{\lesssim}
n^{-\lambda_*\beta_*+\frac{\lambda_{pq}}{\hat q}-\frac
{\lambda_{pq}}{2}+\tilde c\varepsilon}=n^{-\theta_3+ \tilde
c\varepsilon}
$$
(it follows from (\ref{nut_ss}) and Lemma \ref{sum_lem}).
If $\varepsilon$ is sufficiently small, we get
$S\underset{\mathfrak{Z}_0}{\lesssim} n^{-\theta_{j_*}}$. This completes
the proof of (\ref{sl_ttsn1}) and (\ref{2kstg3ttj}).
\end{proof}

\begin{Rem}
\label{diskr_case} Suppose that Assumptions \ref{sup1} and
\ref{sup3} hold, and instead of Assumption \ref{sup2} the
following condition holds: for any $\xi\in {\bf V}({\cal A})$ the
set $F(\xi)$ is the atom of measure ${\rm mes}$. Denote
$\mathfrak{Z}_0=(p, \, q,\, r_0, \, w_*, \, k_*, \, \lambda_*, \,
\mu_*,\, \gamma_*, \, \psi_*,\, u_*,\, c_1, \, c_2, \, c_3)$. Then
the assertion of Theorem \ref{main_abstr_th} holds with
$\delta_*=+\infty$; i.e.
\begin{align}
\label{vrth_n_pleq00000} \vartheta_n(BX_p(\Omega), \, Y_q(\Omega))
\underset{\mathfrak{Z}_0} {\lesssim}
n^{-\lambda_*\beta_*}u_*(n^{\beta_*} \varphi_*(n))
\varphi_*^{-\lambda_*}(n), \quad p\le q,
\end{align}
\begin{align}
\label{vrth_n_pgq00000} \vartheta_n(BX_p(\Omega), \, Y_q(\Omega))
\underset{\mathfrak{Z}_0} {\lesssim} n^{-\mu_*\beta_*+\frac
1q-\frac 1p}u_*(n^{\beta_*} \varphi_*(n)) \varphi_*^{-\mu_*}(n),
\quad p>q;
\end{align}
if $p<q$, $\hat q>2$, $\theta_3 =\lambda_*\beta_* +\min \left\{
\frac 12- \frac{1}{\hat q}, \, \frac 1p-\frac 1q\right\}$,
$\theta_4 =\frac{\lambda_*\beta_* \hat q}{2}$,
$\sigma_3(n)=u_*(n^{\beta_*}\varphi_*(n))\varphi_*^{-\lambda_*}(n)$,
$\sigma_4(n)= \sigma_3(n^{\frac{\hat q}{2}})$ and $\theta_3\ne
\theta_4$, $\theta_{j_*}=\min \{\theta_3, \, \theta_4\}$, $j_*\in
\{3, \, 4\}$, then
$$
\vartheta_n(BX_p(\Omega), \, Y_q(\Omega))
\underset{\mathfrak{Z}_0} {\lesssim} n^{-\theta_{j_*}}
\sigma_{j_*}(n).
$$
\end{Rem}

The following result gives the lower estimate of $n$-widths.
\begin{Lem}
\label{low_est} Let $\Omega$ be a measure space, let $G_1, \,
\dots, \, G_m\subset \Omega$ be pairwise non-overlapping measurable
subsets, let $\psi_1, \, \dots, \, \psi_m\in X_p(\Omega)$, ${\rm
supp}\, \psi_j\subset G_j$,  $\|\psi_j\|_{X_p(\Omega)}=1$,
\begin{align}
\label{psi_j_yq_ge_m}
\|\psi_j\|_{Y_q(\Omega)}\ge M, \quad 1\le j\le m.
\end{align}
Then
$$
\vartheta_n(BX_p(\Omega), \, Y_q(\Omega))\ge M\vartheta_n(B^m_p,
\, l^m_q).
$$
If $p>q$ and if, instead of (\ref{psi_j_yq_ge_m}), the
inequalities
\begin{align}
\label{psi_j_yq_ge_m1} \|\psi_j\|_{Y_q(\Omega)}\ge  M_j, \quad
1\le j\le m,
\end{align}
hold with $M_1\ge M_2\ge\dots \ge M_m>0$, then
\begin{align}
\label{tn_low_est_diag} \vartheta_n(BX_p(\Omega), \,
Y_q(\Omega))\ge \left( \sum \limits _{j=n+1}^m
M_j^{\frac{pq}{p-q}} \right)^{\frac 1q-\frac 1p}.
\end{align}
\end{Lem}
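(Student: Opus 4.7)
My plan is to manufacture an explicit linear embedding of $B_p^m$ into $BX_p(\Omega)$ which, composed with the inclusion $X_p(\Omega)\hookrightarrow Y_q(\Omega)$, amplifies a weighted $l_q^m$-norm to the $Y_q$-norm; once this embedding and a matching left inverse are in hand, each of the three widths transfers to $BX_p(\Omega)\subset Y_q(\Omega)$ by standard factorization. Concretely, define $T\colon\R^m\to X_p(\Omega)$ by $Ta=\sum_{j=1}^m a_j\psi_j$. Since the sets $G_j$ are pairwise non-overlapping, $\supp\psi_j\subset G_j$, and properties (\ref{f_xp}), (\ref{f_yq}) apply to the partition $\{G_1,\ldots,G_m,\Omega\setminus\bigcup_j G_j\}$ (with $\psi_k|_{G_j}=0$ for $k\ne j$ by property 3), a direct calculation yields
\[
\|Ta\|_{X_p(\Omega)}^{p}=\sum_{j=1}^m |a_j|^{p}\|\psi_j\|_{X_p(\Omega)}^{p}=\|a\|_{l_p^m}^{p},\qquad \|Ta\|_{Y_q(\Omega)}^{q}=\sum_{j=1}^m |a_j|^{q}\|\psi_j\|_{Y_q(\Omega)}^{q}\ge\sum_{j=1}^m M_j^{q}|a_j|^{q}.
\]
In particular $T(B_p^m)\subset BX_p(\Omega)$ and $T$ is isometric into $X_p(\Omega)$.

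Next I would build a left inverse $Q\colon Y_q(\Omega)\to\R^m$ with a matching norm bound. For each $j$, the functional $c\,\psi_j\mapsto c$ on the one-dimensional subspace $\R\psi_j\subset Y_q(G_j)$ has norm $1/\|\psi_j\|_{Y_q(G_j)}\le 1/M_j$ (note $\|\psi_j\|_{Y_q(G_j)}=\|\psi_j\|_{Y_q(\Omega)}$ thanks to the support condition); by Hahn--Banach extend it to $\tilde\varphi_j\in Y_q(G_j)^{*}$ of the same norm and set $Qy:=\bigl(\tilde\varphi_j(y|_{G_j})\bigr)_{j=1}^m$. Because $\psi_k|_{G_j}=0$ for $k\ne j$, one reads off $QT=\mathrm{id}_{\R^m}$; and applying (\ref{f_yq}) to the same partition,
\[
\sum_{j=1}^m M_j^{q}\bigl|(Qy)_j\bigr|^{q}\le\sum_{j=1}^m\|y|_{G_j}\|_{Y_q(G_j)}^{q}\le\|y\|_{Y_q(\Omega)}^{q},
\]
which specialises to $\|Q\|_{Y_q(\Omega)\to l_q^m}\le 1/M$ when $M_j\equiv M$.

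Each lower bound then follows from a one-line factorization. For Kolmogorov widths, given $L\in\mathcal{L}_n(Y_q(\Omega))$ set $L':=Q(L)\in\mathcal{L}_n(\R^m)$; for every $a\in B_p^m$,
\[
\inf_{b\in L'}\|a-b\|_{l_q^m}\le M^{-1}\inf_{y\in L}\|Ta-y\|_{Y_q(\Omega)}\le M^{-1}\sup_{f\in BX_p(\Omega)}\inf_{y\in L}\|f-y\|_{Y_q(\Omega)},
\]
and taking the infimum in $L$ gives $Md_n(B_p^m,l_q^m)\le d_n(BX_p(\Omega),Y_q(\Omega))$. For linear widths, replace $L'$ by the operator $B:=QAT$ of rank $\le n$ in the same chain. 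For Gelfand widths, given $x_1^{*},\ldots,x_n^{*}\in Y_q(\Omega)^{*}$, set $\tilde x_j^{*}:=x_j^{*}\circ T\in(\R^m)^{*}$; any $a\in B_p^m$ with $\tilde x_j^{*}(a)=0$ for all $j$ lifts to $Ta\in BX_p(\Omega)\cap\bigcap_j\ker x_j^{*}$, whence $M\|a\|_{l_q^m}\le\|Ta\|_{Y_q(\Omega)}$. This yields $\vartheta_n(BX_p(\Omega),Y_q(\Omega))\ge M\vartheta_n(B_p^m,l_q^m)$ in all three cases. For the diagonal bound (\ref{tn_low_est_diag}) with $p>q$, the same chain delivers $\vartheta_n(BX_p(\Omega),Y_q(\Omega))\ge\vartheta_n(B_p^m,(l_q^m,\|\cdot\|_{q,\mathbf M}))$, and the change of variables $b_j=M_j a_j$ is an isometry of $(l_q^m,\|\cdot\|_{q,\mathbf M})$ onto $l_q^m$ sending $B_p^m$ to the Pietsch--Stesin body $B_p^m(\mathbf M)$ with $\mathbf M=(M_1,\ldots,M_m)$ decreasing, so (\ref{diag_pietsh}) produces the claimed estimate. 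The only mildly delicate step is the Hahn--Banach construction of the $\tilde\varphi_j$ together with the verification $QT=\mathrm{id}$; everything else is a routine diagram chase.
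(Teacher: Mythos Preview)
Your argument is correct and is precisely the standard embedding/factorization proof that the paper is pointing to when it writes ``The proof is the same as in \cite{peak_lim}'' and ``to prove (\ref{tn_low_est_diag}) we apply (\ref{diag_pietsh}).'' One cosmetic remark: the vanishing $\psi_k|_{G_j}=0$ for $k\ne j$ comes directly from $\supp\psi_k\subset G_k$ together with the non-overlapping hypothesis, not from property~3; property~3 is not needed there.
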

The proof is the same as in
\cite{peak_lim}. In order to prove (\ref{tn_low_est_diag})
we apply (\ref{diag_pietsh}).

\section{Weighted Sobolev classes on a John domain}

Let $\Theta\subset \Xi\left(\left[-\frac 12, \, \frac
12\right]^d\right)$ be a family of pairwise non-overlapping
cubes.

\begin{Def}
Let ${\cal G}$ be a graph, and let $F:{\bf V}({\cal G})
\rightarrow \Theta$ be a one-to-one mapping. We say that $F$ is
consistent with the structure of the graph ${\cal G}$ if the
following condition holds: for any adjacent vertices $\xi'$, $\xi''\in
{\bf V}({\cal G})$ the set $\Gamma _{\xi',\xi''} :=F(\xi')\cap F(\xi'')$
has dimension $d-1$.
\end{Def}

Let $({\cal T}, \, \xi_*)$ be a tree, and let $F:{\bf V}({\cal T})
\rightarrow \Theta$  be a one-to-one mapping consistent with the
structure of the tree ${\cal T}$. For any adjacent vertices
$\xi'$, $\xi''$, we set $\mathaccent '27 \Gamma
_{\xi',\xi''}=\inter _{d-1}\Gamma _{\xi',\xi''}$, and for each
subtree ${\cal T}'$ of ${\cal T}$, we put
$$
\Omega _{{\cal T'},F}=\left(\cup _{\xi\in {\bf V}({\cal
T'})}\inter F(\xi)\right) \cup \left(\cup _{(\xi',\xi'')\in {\bf
E}({\cal T'})}\mathaccent '27 \Gamma _{\xi',\xi''}\right).
$$

Let $\Omega\in {\bf FC}(a)$. Suppose that conditions (\ref{def_h}),
(\ref{yty}), (\ref{ghi_g0}), (\ref{psi_cond}), (\ref{muck}),
(\ref{beta}), (\ref{phi_g}), (\ref{ll}), (\ref{g0ag}) hold.

Let $\Theta(\Omega)$ be a Whitney covering of $\Omega$
(see Theorem \ref{whitney}). The following lemma is
proved in \cite{vas_john}.
\begin{Lem}
\label{cor_omega_t} Let $a>0$, $\Omega\subset \left[-\frac 12, \,
\frac 12\right]^d$, $\Omega\in {\bf FC}(a)$. Then there exist a
tree ${\cal T}$ and a one-to-one mapping $F:{\bf V}({\cal T})
\rightarrow \Theta(\Omega)$ consistent with the structure of
${\cal T}$ and a number $b_*=b_*(a, \, d)>0$ such that
$\Omega_{{\cal T}',F}\in {\bf FC}(b_*)$ for any subtree ${\cal
T}'$ of ${\cal T}$.
\end{Lem}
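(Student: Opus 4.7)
The plan is to extract $\mathcal{T}$ from a Whitney decomposition of $\Omega$ and then transfer the cigar condition of $\Omega$ to each subdomain $\Omega_{\mathcal{T}',F}$ by routing along the tree. First apply Theorem \ref{whitney} to obtain $\Theta(\Omega)=\{\Delta_j\}_{j\in\N}$, and form the graph $\mathcal{G}$ on $\Theta(\Omega)$ joining $\Delta_i$ to $\Delta_j$ iff $\dim(\Delta_i\cap\Delta_j)=d-1$. Since $\Omega=\cup_j\Delta_j$ is connected and the cubes are closed, $\mathcal{G}$ is connected. Let $\Delta_*\in\Theta(\Omega)$ be a Whitney cube meeting $x_*$.

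Next I would define a parent map $\pi$ on $\Theta(\Omega)\setminus\{\Delta_*\}$ using the John curves. For $\Delta\ne\Delta_*$ pick a generic point $x_\Delta\in\inter\Delta$, consider its John curve $\gamma_{x_\Delta}:[0,T(x_\Delta)]\to\Omega$, and set $t_1(\Delta)=\inf\{t>0:\gamma_{x_\Delta}(t)\notin\Delta\}$; define $\pi(\Delta)$ to be the Whitney cube containing $\gamma_{x_\Delta}(t_1(\Delta)+\varepsilon)$ for all small $\varepsilon>0$. The cigar condition $B_{at_1(\Delta)}(\gamma_{x_\Delta}(t_1(\Delta)))\subset\Omega$ forces $\gamma_{x_\Delta}(t_1(\Delta))$ to sit on an open $(d-1)$-face of $\Delta$, so $\pi(\Delta)$ is adjacent to $\Delta$ in $\mathcal{G}$. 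Property~2 of Theorem \ref{whitney} (${\rm dist}(\Delta_j,\partial\Omega)\asymp 2^{-{\bf m}(\Delta_j)}$) together with property~1 of Definition \ref{fca} bounds ${\bf m}(\Delta)$ from above along the finite chain $\Delta,\pi(\Delta),\pi^2(\Delta),\dots$, so iteration reaches $\Delta_*$ in finitely many steps. The resulting digraph is the tree $\mathcal{T}$ rooted at the vertex $\xi_*$ with $F(\xi_*)=\Delta_*$, and by construction each tree edge is a pair of Whitney cubes sharing a $(d-1)$-face, i.e.\ $F$ is consistent with $\mathcal{T}$.

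For a subtree $\mathcal{T}'\subset\mathcal{T}$ rooted at $\hat\xi$, set $\hat\Delta=F(\hat\xi)$ and take the center $\hat x_*$ of $\hat\Delta$ as the distinguished point. For any $y\in\Omega_{\mathcal{T}',F}$ let $\Delta^{(0)}\ni y$ be its cube and let $\Delta^{(0)},\Delta^{(1)},\dots,\Delta^{(m)}=\hat\Delta$ be the unique simple path in $\mathcal{T}'$; crucially, since $\mathcal{T}'$ is a subtree, this entire path lies in $F({\bf V}(\mathcal{T}'))$. Define $\tilde\gamma_y$ by concatenating: a segment from $y$ to the center $c_0$ of $\Delta^{(0)}$, then, for each $k=0,\dots,m-1$, segments from $c_k$ through a point of $\mathaccent '27 \Gamma_{\xi_k,\xi_{k+1}}$ to $c_{k+1}$, and a final segment to $\hat x_*$; reparametrize by arc length. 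Each segment lies in $\Omega_{\mathcal{T}',F}$ by definition of that set.

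The technical heart is to verify $B_{b_*t}(\tilde\gamma_y(t))\subset\Omega_{\mathcal{T}',F}$ with $b_*=b_*(a,d)$, and this is the main obstacle. Two ingredients must be combined: property~3 of Theorem \ref{whitney}, which shows that adjacent Whitney cubes have side lengths within a factor of $4$ so a ball of radius $\asymp 2^{-{\bf m}(\Delta^{(k)})}$ centered on the shared face stays inside $\Delta^{(k)}\cup\Delta^{(k+1)}\subset\Omega_{\mathcal{T}',F}$; and, more delicately, a lower bound $2^{-{\bf m}(\Delta^{(k)})}\gtrsim_{a,d} s_k$, where $s_k$ is the arc length of $\tilde\gamma_y$ from $y$ up to its first entry into $\Delta^{(k)}$. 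This second bound is inherited from the defining curves $\gamma_{x_\Delta}$: the cigar condition $B_{at}\subset\Omega$ along each such curve, combined with property~2 of Theorem \ref{whitney}, controls the ratios ${\bf m}(\Delta^{(k+1)})-{\bf m}(\Delta^{(k)})$ and the length increments at each tree step by constants depending only on $a$ and $d$. The main work is checking that these local, step-by-step estimates telescope to a uniform lower bound on side lengths versus traversed arc length, which then yields the cigar condition with a constant $b_*$ depending only on $a$ and $d$.
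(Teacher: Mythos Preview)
The paper does not give a proof of this lemma; it simply cites \cite{vas_john}, so there is no in-paper argument to compare against. I will therefore comment on the proposal itself.

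There is a genuine gap in your construction of $\mathcal{T}$. Your parent map $\pi$ assigns to each $\Delta$ the first Whitney cube that the curve $\gamma_{x_\Delta}$ enters after leaving $\Delta$, and you then claim that iterating $\pi$ reaches $\Delta_*$. But at the next step you use a \emph{different} John curve, namely $\gamma_{x_{\pi(\Delta)}}$, and the cigar growth of the first curve is lost. Your sentence ``bounds ${\bf m}(\Delta)$ from above along the finite chain'' does not hold as stated: from $B_{at_1}(\gamma_{x_\Delta}(t_1))\subset\Omega$ and property~2 of Theorem~\ref{whitney} you only get $2^{-{\bf m}(\pi(\Delta))}\gtrsim_{a,d} 2^{-{\bf m}(\Delta)}$, while property~3 allows $|{\bf m}(\pi(\Delta))-{\bf m}(\Delta)|\le 2$ in either direction. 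Nothing prevents ${\bf m}(\pi(\Delta))={\bf m}(\Delta)$ or a two-cycle $\pi(\Delta)=\Delta'$, $\pi(\Delta')=\Delta$; the functional graph you build is not shown to be acyclic, and the chain is not shown to be finite. Following a \emph{single} curve $\gamma_{x_\Delta}$ from $x_\Delta$ to $x_*$ would give a finite Whitney chain reaching $\Delta_*$, but that is not what your parent map does.

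The same issue propagates to the ${\bf FC}(b_*)$ verification. The estimate you need, $2^{-{\bf m}(\Delta^{(k)})}\gtrsim_{a,d} s_k$, is equivalent to $\sum_{j<k}2^{-{\bf m}(\Delta^{(j)})}\lesssim_{a,d} 2^{-{\bf m}(\Delta^{(k)})}$ along every root-directed tree path, i.e.\ at-least-geometric growth of the side lengths. With your parent map this is precisely what is \emph{not} guaranteed, since you only control one step at a time and lose the global cigar monotonicity by switching curves. Saying that the bound is ``inherited from the defining curves $\gamma_{x_\Delta}$'' hides exactly the missing argument. In \cite{vas_john} the tree is built so that this growth along root-directed paths is forced by construction; your sketch would need a substantially different definition of $\pi$ (or a separate argument ruling out long stretches of equal-level cubes and cycles) before the telescoping in your last paragraph can be justified.
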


Let $\{{\cal D}_{j,i}\}_{j\ge j_*, \, i\in \tilde I_j}$ be a
partition of the tree ${\cal T}$ into non-empty subtrees. We say
that this partition is regular if $j'=j+1$ for any $j$, $j'\ge
j_*$, $i\in \tilde I_j$, $i'\in \tilde I_{j'}$ such that ${\cal
D}_{j',i'}$ succeeds the tree ${\cal D}_{j,i}$.

In \cite{vas_vl_raspr} the number $\overline{s}=\overline{s}(a, \,
d) \in \N$ is defined and the regular partition $\{({\cal D}_{j,i}, \,
\hat \xi_{j,i})\}_{j\ge j_*, \, i\in \tilde I_j}$ of the tree ${\cal
T}$ is constructed. This partition satisfies the following conditions:
\begin{enumerate}
\item for any $j\ge j_*$, $i\in \tilde I_j$
\begin{align}
\label{diam_dj} {\rm diam}\, \Omega_{{\cal
D}_{j,i},F}{\underset{a,d}{\asymp}} 2^{-\overline{s}j};
\end{align}
\item for any $x\in \Omega _{{\cal D}_{j,i},F}$
\begin{align}
\label{dist_x1} {\rm dist}_{|\cdot|}(x, \, \Gamma)
\underset{a,d}{\asymp} 2^{-\overline{s}j}.
\end{align}
\end{enumerate}

For $j\ge j_*$, $t\in \tilde I_j$, $l\in \Z_+$, we define the set
$\tilde I^l_{j,t}$ by
\begin{align}
\label{ijtl} \tilde I_{j,t}^l=\{i\in \tilde I_{j+l}:\, \hat
\xi_{j+l,i}\ge \hat \xi_{j,t}\}.
\end{align}

Let \label{a_def} ${\cal A}$ be the tree with vertex set
$\{\eta_{j,i}\} _{j\ge j_*,\, i\in \tilde I_j}$ and edge set
defined by
\begin{align}
\label{vaeij} {\bf V}^{\cal A}_1(\eta_{j,i}) =
\{\eta_{j+1,s}\}_{s\in \tilde I^1_{j,i}}.
\end{align}
For $\xi=\eta_{j,i}\in {\bf V}({\cal A})$ we write $\Omega[\xi]:=
\Omega_{j,i}:=\Omega_{{\cal D}_{j,i},F}$. We set $\hat\Theta
=\{\Omega[\xi]\} _{\xi\in {\bf V}({\cal A})}$, $\hat
F(\xi)=\Omega[\xi]$.

In \cite{vas_vl_raspr2} it is proved that
\begin{align}
\label{v_a_l_eji} {\bf V}^{\cal A}_l(\eta_{j,i})=\{\eta_{j+l,t}:\;
t\in \tilde I^l_{j,i}\}, \quad {\rm card}\, {\bf V}^{\cal
A}_l(\eta_{j,i}) \le K_0\frac{h(2^{-\overline{s}j})}
{h(2^{-\overline{s}(j+l)})}, \quad j, \; l\in \Z_+,
\end{align}
where $K_0=K_0({\mathfrak{Z}})$. In particular, (\ref{c_v1_a})
holds with $c_1=c_1({\mathfrak{Z}})$ and
\begin{align}
\label{sob_nu_t} {\rm card}\, \tilde I_j\underset{\mathfrak{Z}}
{\lesssim} \frac{2^{\overline{s}\theta j}}{\Lambda(2^{-\overline{s}j})}.
\end{align}

Denote by ${\cal P}_{r-1}(\R^d)$ the space of polynomials on
$\R^d$ of degree not exceeding $r-1$. For a measurable set
$E\subset \R^d$ we put $${\cal P}_{r-1}(E)= \{f|_E:\, f\in {\cal
P}_{r-1}(\R^d)\}.$$

\begin{trma}
\label{appr} {\rm (see \cite{vas_vl_raspr}).} Suppose that the
conditions (\ref{def_h}), (\ref{yty}), (\ref{ghi_g0}),
(\ref{psi_cond}), (\ref{muck}), (\ref{beta}), (\ref{phi_g}),
(\ref{ll}), (\ref{g0ag}) hold. Then $W^r_{p,g}(\Omega)\subset
L_{q,v}(\Omega)$. Moreover, for any subtree ${\cal D}\subset {\cal
T}_{\hat \xi_{j_0,i_0}}$ rooted at $\hat \xi_{j_0,i_0}$ there
exists a linear continuous operator $P:L_{q,v}(\Omega)\rightarrow
{\cal P}_{r-1}(\Omega)$ such that for any function $f\in
W^r_{p,g}(\Omega)$
\begin{align}
\label{pol_appr} \|f-Pf\|_{L_{q,v}(\Omega_{{\cal D},F})}
\underset{\mathfrak{Z}}{\lesssim} 2^{\overline{s}j_0\left(\beta
-\delta\right)}\Psi(2^{-\overline{s}j_0})\left\|\frac{\nabla^r
f}{g}\right\| _{L_p(\Omega_{{\cal D},F})}
\end{align}
in the case (\ref{beta}), a),
\begin{align}
\label{pol_appr1} \|f-Pf\|_{L_{q,v}(\Omega_{{\cal D},F})}
\underset{\mathfrak{Z}}{\lesssim} 2^{-\overline{s}\theta\left(\frac
1q-\frac 1p\right)_+j_0}j_0^{-\alpha+\left(\frac 1q-\frac
1p\right)_+}\rho(j_0)\left\|\frac{\nabla^r f}{g}\right\|
_{L_p(\Omega_{{\cal D},F})}
\end{align}
in the case (\ref{beta}), b).
\end{trma}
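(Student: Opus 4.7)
\textbf{Proof plan for Theorem \ref{appr}.}
The plan is to reduce the weighted polynomial approximation bound on $\Omega_{\mathcal{D},F}$ to a classical integral-representation formula valid on John domains, and then to extract the weighted decay rate from a careful level-by-level summation indexed by the tree $\mathcal{D}$. By Lemma \ref{cor_omega_t}, $U:=\Omega_{\mathcal{D},F}\in {\bf FC}(b_*)$ with $b_*=b_*(a,d)$, so $U$ is itself a John domain of the same flavour as $\Omega$. Fix a distinguished point $x_0\in F(\hat\xi_{j_0,i_0})$ and, using $x_0$ as a centre with a ball of radius comparable to $2^{-\bar s j_0}$, define the operator $P:L_{q,v}(U)\to \mathcal{P}_{r-1}(\R^d)$ as an averaged Taylor polynomial (Sobolev representation). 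Since this operator is a linear integral operator with bounded kernel supported in a fixed ball, $P$ extends continuously to all of $L_{q,v}(\Omega)$, giving the required $P:L_{q,v}(\Omega)\to\mathcal{P}_{r-1}(\Omega)$.

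For $f\in W^r_{p,g}(\Omega)$, the John condition yields the pointwise bound
$$
|f(x)-Pf(x)|\underset{\mathfrak{Z}}{\lesssim}\int_U \frac{|\nabla^r f(y)|}{|x-y|^{d-r}}\chi_{S(x)}(y)\,dy,
$$
where $S(x)$ is the ``John shadow'' of $x$ relative to $x_0$; this is the standard Reshetnyak/Boas--Straube/Burenkov formula for John domains. Writing $\nabla^r f=g\psi$ with $\|\psi\|_{L_p(U)}\le \|\nabla^r f/g\|_{L_p(\Omega_{\mathcal{D},F})}$, the left-hand side of (\ref{pol_appr}) becomes
$$
\|f-Pf\|_{L_{q,v}(U)}\le \left\| v(x)\int_U \frac{g(y)|\psi(y)|}{|x-y|^{d-r}}\,dy\right\|_{L_q(U,dx)}.
$$
Thus the problem reduces to a two-weight norm estimate for a Riesz-type potential on $U$, where the weights $v,g$ are radial functions of $\operatorname{dist}(\cdot,\Gamma)$ of regular variation type.

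To evaluate the operator norm I would decompose $U$ dyadically using the Whitney cubes indexed by vertices of $\mathcal{D}$: on $\Omega_{j,i}$ with $j\ge j_0$, (\ref{dist_x1}) gives $\operatorname{dist}(x,\Gamma)\asymp 2^{-\bar s j}$, hence $v(x)\asymp \varphi_v(2^{-\bar s j})$ and $g(y)\asymp \varphi_g(2^{-\bar s j})$ are essentially constant on each cube by (\ref{ghi_g0}) and Lemma \ref{sum_lem}. For $p\le q$ I would split the double integral into diagonal terms (cube vs. cube at the same level) and off-diagonal terms, bounding the diagonal terms via Theorem \ref{adams_etc} (boundedness of the Riesz potential) after rescaling each cube, and the off-diagonal terms via a discrete Hardy-type inequality in the level index $j$. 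For $p>q$ one applies H\"older's inequality across cubes at a fixed level, using (\ref{v_a_l_eji}) and (\ref{sob_nu_t}) to count vertices, and the Muckenhoupt-type condition (\ref{muck}) to ensure local integrability of $v^q$. In both regimes the resulting level sum has the form $\sum_{j\ge j_0}2^{\bar s j(\beta-\delta)q}\Psi(2^{-\bar s j})^q\cdot(\text{combinatorial factor})$, and the hypothesis (\ref{beta})\,a) guarantees geometric decay dominated by the $j=j_0$ term, yielding (\ref{pol_appr}); in case (\ref{beta})\,b) the exponent is zero and the sum becomes, after invoking (\ref{phi_g})--(\ref{g0ag}), a logarithmically convergent series producing the factor $j_0^{-\alpha+(1/q-1/p)_+}\rho(j_0)$, which gives (\ref{pol_appr1}).

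The main obstacle, and the step requiring the most care, is the two-weight estimate for the Riesz potential with weights singular along the $h$-set $\Gamma$, particularly in the critical case (\ref{beta})\,b) and the case $p<q$ combined. The continuous analysis must track both the geometric scale $2^{-\bar s j}$ from the h-set and the slowly varying corrections $\Psi_g,\Psi_v,\Lambda,\rho,\tau$; Lemma \ref{sum_lem} is the basic tool that lets these slowly varying factors pass through geometric sums as if they were constants. Once these two-weight Riesz estimates are in place, the assembly of the final bound (\ref{pol_appr})--(\ref{pol_appr1}) is a calculation driven by (\ref{diam_dist}), (\ref{diam_dj}), and the counting bound (\ref{v_a_l_eji}).
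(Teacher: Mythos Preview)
Your outline is broadly the right one, and it coincides with the route taken in \cite{vas_vl_raspr}: build $P$ as an averaged Taylor polynomial on the John domain $\Omega_{\mathcal D,F}$, use the Reshetnyak-type integral representation to reduce $\|f-Pf\|_{L_{q,v}}$ to a two-weight estimate for a Riesz potential, then discretize along the Whitney/tree decomposition $\{\Omega_{j,i}\}$. Note that the present paper does not re-prove Theorem~\ref{appr}; it quotes it and singles out the one ingredient that carries the weight of the argument, namely Theorem~\ref{two_weight_hardy_tree}.

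The place where your sketch is imprecise is exactly that ingredient. After discretization, the off-diagonal (``far'') contribution at a point $x\in\Omega_{j,i}$ involves only the \emph{ancestors} of $(j,i)$ in the tree $\mathcal A$; taking the $L_{q,v}$-norm produces a sum of the shape
\[
\Bigl(\sum_{(j,i)} w_j^{q}\Bigl(\sum_{\xi_{j_0,i_0}\le \xi_{j',i'}\le \xi_{j,i}} u_{j'} f_{j',i'}\Bigr)^{q}\Bigr)^{1/q},
\]
with $u_{j'}=2^{-\kappa_u\overline{s}j'}\Psi_g(2^{-\overline{s}j'})$, $w_j=2^{-\kappa_w\overline{s}j}\Psi_v(2^{-\overline{s}j})$ and $f_{j',i'}=\|\psi\|_{L_p(\Omega_{j',i'})}$. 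This is a genuinely tree-indexed Hardy sum, not a one-dimensional Hardy sum ``in the level index $j$'': the outer sum runs over all vertices, the inner sum runs along the unique path to the root, and the branching encoded by (\ref{v_a_l_eji}) enters through the cardinality of each level. A one-parameter Hardy inequality does not see this branching and will give the wrong constant whenever $\theta>0$ (and, for $p>q$, can fail outright). The paper isolates precisely this step as Theorem~\ref{two_weight_hardy_tree}, which delivers the constants $C(j_0)=2^{\overline{s}j_0(\beta-\delta)}\Psi(2^{-\overline{s}j_0})$ in case~(\ref{beta})\,a) and $C(j_0)=2^{-\overline{s}\theta(1/q-1/p)_+ j_0}\,j_0^{-\alpha+(1/q-1/p)_+}\rho(j_0)$ in case~(\ref{beta})\,b). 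Once you invoke that tree Hardy inequality, there is no residual ``level sum $\sum_{j\ge j_0}\dots$'' to analyze; the bound $C(j_0)\|\psi\|_{L_p}$ comes out directly. Your diagonal step (local Riesz/Sobolev estimate on each $\Omega_{j,i}$ via rescaling and Theorem~\ref{adams_etc}, then $l_p\hookrightarrow l_q$ across cubes for $p\le q$, or H\"older plus (\ref{v_a_l_eji}) for $p>q$) is fine and matches the cited argument.
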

In order to prove this theorem the following two-weighted
Hardy-type inequality on the tree was obtained.
\begin{trma}
\label{two_weight_hardy_tree} Let $({\cal A}, \, \xi_0)$ be a tree
with vertex set $\{\eta_{j,i}\} _{j\ge j_*,\, i\in \tilde I_j}$
such that (\ref{vaeij}) and (\ref{v_a_l_eji}) hold, with $h\in
\mathbb{H}$ satisfying (\ref{def_h}), (\ref{yty}). Let $m_*\in
\N$, $\kappa_w>0$, $\kappa_u\in \R$, $\kappa:=\kappa_u+\kappa_w\ge
-\theta \left(\frac 1q-\frac 1p\right)_+$. Let $\Psi_g$, $\Psi_v$
be absolutely continuous functions, which satisfy
(\ref{psi_cond}). In addition, suppose that if
$\kappa=-\theta\left(\frac 1q-\frac 1p\right)_+$, then
(\ref{phi_g}), (\ref{ll}), (\ref{g0ag}) hold.  Let $u, \, w:{\bf
V}({\cal A})\rightarrow (0, \, \infty)$, $u(\xi)=u_j$,
$w(\xi)=w_j$, $\xi\in {\bf V}_{j-j_*}^{\cal A} (\xi_0)$,
$$
u_j=2^{-\kappa _um_*j}\Psi_g(2^{-m_*j}), \quad w_j=2^{-\kappa
_wm_*j}\Psi_v(2^{-m_*j}).
$$
Let ${\cal D}\subset {\cal A}$ with minimal vertex
$\xi_*=\eta_{j_0, i_0}$. Denote $\Psi(t)=\Psi_g(t)\Psi_v(t)$. Then
for any function $f:{\bf V}({\cal D}) \rightarrow \R_+$
$$
\left(\sum \limits _{\xi \in {\bf V}({\cal D})} w^q(\xi)
\left(\sum \limits _{\xi_*\le \xi'\le \xi} u(\xi')
f(\xi')\right)^q\right)^{\frac 1q}
\underset{p,q,u,w,h,K_0}{\lesssim} C(j_0)\left(\sum \limits _{\xi
\in {\bf V}({\cal D})} f^p(\xi) \right)^{\frac 1p},
$$
with $C(j_0)=2^{-\kappa m_*j_0}\Psi(2^{-m_*j_0})$ in the case
$\kappa>-\theta \left(\frac 1q-\frac 1p\right)_+$,
and $$C(j_0)=2^{-m_*\theta\left(\frac
1q-\frac 1p\right)_+j_0}j_0^{-\alpha+\left(\frac 1q-\frac
1p\right)_+}\rho(j_0)$$ in the case $\kappa=-\theta
\left(\frac 1q-\frac 1p\right)_+$.
\end{trma}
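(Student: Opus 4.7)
The plan is to slice ${\bf V}({\cal D})$ by distance from the minimal vertex $\xi_*$ and reduce the tree inequality to a one-dimensional weighted Hardy inequality, then sum over levels using the cardinality bound (\ref{v_a_l_eji}). For $l \ge j_0$ set $V_l := {\bf V}^{{\cal A}}_{l-j_0}(\xi_*)\cap {\bf V}({\cal D})$. On $V_l$ both weights are constant, $u \equiv u_l$ and $w \equiv w_l$, and by (\ref{v_a_l_eji}) together with $h(t)=t^\theta \Lambda(t)$,
$$
{\rm card}\, V_l \le K_0\cdot 2^{m_*\theta(l-j_0)}\frac{\Lambda(2^{-m_*j_0})}{\Lambda(2^{-m_*l})}.
$$
For each $\xi\in V_l$ let $\xi^{(j)}$ denote its unique ancestor at level $j \in [j_0,l]$; then the inner sum is the path sum $Sf(\xi)=\sum_{j=j_0}^l u_j f(\xi^{(j)})$, and the quantity to be bounded rewrites as $I=\sum_{l\ge j_0} w_l^q \sum_{\xi \in V_l}(Sf(\xi))^q$.

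For $p\le q$ I would apply a weighted H\"older inequality on each path, inserting a damping factor $2^{\pm\varepsilon(l-j)}$ with small $\varepsilon>0$ whose sign is chosen so that the first factor becomes a convergent geometric sum. This gives
$$
Sf(\xi)^q \le A_l^{q/p'}\Big(\sum_{j=j_0}^l 2^{\varepsilon(l-j)(p-1)}f(\xi^{(j)})^p\Big)^{q/p},
$$
with $A_l=\sum_{j=j_0}^l u_j^{p'}\,2^{-\varepsilon(l-j)}$; by Lemma \ref{sum_lem} and the explicit form of $u_j$, the factor $A_l$ is of order $u_l^{p'}$ up to slowly-varying corrections. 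Summing over $\xi\in V_l$, interchanging the summation order $\sum_l\sum_j \to \sum_j\sum_{l\ge j}$, and using the triangle inequality in $l^{q/p}$ (valid because $p\le q$) reduces the problem to a scalar geometric sum in $l-j$. In the subcritical case $\kappa>-\theta(1/q-1/p)_+$ this converges to $C(j_0)=2^{-\kappa m_*j_0}\Psi(2^{-m_*j_0})$ after one more appeal to Lemma \ref{sum_lem}. The case $p>q$ I would treat analogously by H\"older in $l^{p/(p-q)}$ to rearrange the tree sum before applying diagonal estimates of Pietsch-Stesin type as in (\ref{diag_pietsh}).

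The main obstacle will be the critical case $\kappa=-\theta(1/q-1/p)_+$, in which the geometric factor degenerates and the outer sum over $l$ becomes harmonic. This is exactly where the stronger hypotheses (\ref{phi_g}), (\ref{ll}), (\ref{g0ag}) are used: the strict inequality $\alpha>(1-\gamma)(1/q-1/p)_+$ is precisely what is needed to ensure convergence of the resulting logarithmic-type sum, and tracking the slow-variation factors $\tau$, $\rho_g$, $\rho_v$ via Lemma \ref{sum_lem} yields the asserted logarithmic correction $j_0^{-\alpha+(1/q-1/p)_+}\rho(j_0)$. A secondary subtlety is that when interchanging path sums with tree sums one must control $\{\xi\in V_l : \xi^{(j)}=\eta\}$ uniformly in $\eta$; this follows by applying (\ref{v_a_l_eji}) with root $\eta$ in place of $\xi_*$.
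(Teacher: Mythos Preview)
The paper does not actually prove this theorem. It is stated without proof, introduced by the sentence ``In order to prove this theorem the following two-weighted Hardy-type inequality on the tree was obtained'', referring back to Theorem~\ref{appr}, which is itself cited from \cite{vas_vl_raspr}. So there is no in-paper argument to compare against; the result is imported from \cite{vas_vl_raspr}.

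On the substance of your outline: the level-slicing and path-H\"older approach for $p\le q$ is the standard route to tree Hardy inequalities and is broadly sound. One point to watch is that after the H\"older step and the interchange $\sum_l\sum_j\to\sum_j\sum_{l\ge j}$, the cardinality factor from (\ref{v_a_l_eji}) and the weight $w_l^q$ must be summed together over $l\ge j$; both the exponential part and the slowly-varying parts $\Lambda,\Psi_v$ enter that sum, so ``$A_l$ is of order $u_l^{p'}$'' is only half the story---you need to check that the combined $l$-sum collapses to the claimed $C(j_0)$ and not merely to something of the same exponential order.

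There is, however, a real gap in your $p>q$ sketch. The Pietsch--Stesin formula (\ref{diag_pietsh}) computes Kolmogorov widths of coordinate ellipsoids in $l_q^\nu$; it is not a tool for bounding a Hardy-type operator norm, and invoking it here is a category error. For $p>q$ the tree Hardy inequality is established by a different mechanism---typically H\"older with the conjugate exponent $\frac{pq}{p-q}$ applied across levels after reorganising the double sum, or a duality/Maz'ya-type argument---and the relevant convergence condition is a sum of terms of the form $\bigl(u_jw_j\cdot(\text{level cardinality})^{1/q-1/p}\bigr)^{pq/(p-q)}$, not any width computation. That step needs to be replaced entirely.
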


The following assertion (in fact, a more general result for
weighted spaces) was proved in \cite{vas_john}.  For the
non-weighted case, Besov \cite{besov_peak_width} later put forward a
more simple proof.
\begin{Lem}
\label{m_part} Let $G\subset \R^d$, $G\in {\bf FC}(a)$, $n\in
\N$. Then there exists a family of partitions $\{T_{m,n}(G)\}_{m\in
\Z_+}$ with the following properties:
\begin{enumerate}
\item ${\rm card}\, T_{m,n}(G)\underset{a,d}{\lesssim}2^mn$;
\item for any $E\in T_{m,n}(G)$ there exists a linear continuous
operator $P_E: L_q(\Omega) \rightarrow {\cal P}_r(E)$ such that for any
function $f\in {\rm span}\, W^r_p(G)$
\begin{align}
\label{lq_e} \|f-P_Ef\|_{L_q(E)}\underset{p,q,r,d,a}{\lesssim}
(2^mn)^{-\frac rd-\frac 1q+\frac 1p} ({\rm mes}\, G)^{\frac
rd+\frac 1q-\frac 1p} \|\nabla^r f\|_{L_p(E)};
\end{align}
\item for any $m\in \Z_+$, $E\in T_{m,n}(G)$,
$$
{\rm card}\, \{E'\in T_{m+1,n}(G):\; {\rm mes}\, (E\cap
E')>0\}\underset{a,d}{\lesssim}1,
$$
$$
{\rm card}\, \{E'\in T_{m-1,n}(G):\; {\rm mes}\, (E\cap
E')>0\}\underset{a,d}{\lesssim}1, \text{ if }m\ge 1.
$$
\end{enumerate}
\end{Lem}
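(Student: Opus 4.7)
The plan proceeds in four steps. First, I would invoke Lemma \ref{cor_omega_t} to represent $G$ as $\Omega_{{\cal T},F}$, where ${\cal T}$ is a tree and $F:{\bf V}({\cal T})\to\Theta(G)$ is a bijection with the Whitney covering of $G$, such that every subtree ${\cal T}'\subset{\cal T}$ yields a John subdomain $\Omega_{{\cal T}',F}\in{\bf FC}(b_*)$ with a uniform constant $b_*=b_*(a,d)>0$. This reduces the geometric partition problem to a combinatorial one about ${\cal T}$.

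Second, I would apply Lemma \ref{lemma_o_razb_dereva1} with the additive set function $\Phi({\bf W})=\sum_{\xi\in{\bf W}}{\rm mes}\,F(\xi)$, using the parameter $2^m n$ in place of $n$. This produces, for each $m\in\Z_+$, a partition $\mathfrak{S}_m$ of ${\cal T}$ into at most $C(k)\cdot 2^m n$ subtrees ${\cal T}_j$ with ${\rm mes}\,\Omega_{{\cal T}_j,F}\lesssim{\rm mes}\,G/(2^m n)$ whenever ${\rm card}\,{\bf V}({\cal T}_j)\ge 2$, and with the property that any element of $\mathfrak{S}_{m+1}$ overlaps at most $C(k)$ elements of $\mathfrak{S}_m$. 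For singleton subtrees $\{\xi\}$ whose Whitney cube $F(\xi)$ has measure exceeding ${\rm mes}\,G/(2^m n)$, I would further subdivide $F(\xi)$ by a standard dyadic partition $\Xi_{s(\xi,m)}(F(\xi))$, choosing $s(\xi,m)$ so that each sub-cube has side $\asymp({\rm mes}\,G/(2^m n))^{1/d}$; a volume count shows that the total number of added pieces remains $\lesssim 2^m n$, preserving property~1. Taking $T_{m,n}(G)$ to be the resulting refined collection yields elements that are either John subdomains with constant $b_*$ or cubes (which lie in ${\bf FC}(c_d)$).

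Third, on each $E\in T_{m,n}(G)$, Remark \ref{rem2} combined with $E\in{\bf FC}(b_*)$ (or $E$ being a cube) gives ${\rm diam}\,E\asymp({\rm mes}\,E)^{1/d}$ with implicit constants depending only on $b_*$ and $d$. The averaged-Taylor polynomial construction on a John domain (classical Sobolev--Poincar\'e, as in \cite{leoni1, mazya1}) furnishes a linear continuous operator $P_E:L_q(\Omega)\to{\cal P}_r(E)$ satisfying $\|f-P_Ef\|_{L_q(E)}\underset{p,q,r,d,a}{\lesssim}({\rm mes}\,E)^{r/d+1/q-1/p}\|\nabla^r f\|_{L_p(E)}$; combining with ${\rm mes}\,E\lesssim{\rm mes}\,G/(2^m n)$ gives exactly inequality (\ref{lq_e}). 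Alternatively, Theorem \ref{appr} of the present paper, specialized to $g=v\equiv 1$ on the tree associated with $E$, yields the same bound.

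The main obstacle is preserving property~3 after the ad hoc dyadic refinement of large Whitney cubes carried out in step~2. The resolution is that those refinements are consistent across $m$ by construction: the parameter $s(\xi,m)$ depends monotonically on $m$ and changes by at most one dyadic step as $m\mapsto m\pm 1$, since the target side-length $({\rm mes}\,G/(2^m n))^{1/d}$ changes by a factor of $2^{-1/d}$. Consequently, each refined sub-cube at level $m$ meets at most $O(1)$ refined sub-cubes at level $m\pm 1$; combined with the overlap bound furnished by Lemma \ref{lemma_o_razb_dereva1} on the subtree side, property~3 follows, completing the construction.
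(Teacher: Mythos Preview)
The paper does not prove Lemma~\ref{m_part}; it merely cites \cite{vas_john} (and notes Besov's simpler argument \cite{besov_peak_width} for the non-weighted case). Your outline is essentially a reconstruction of the proof in \cite{vas_john}: the same auxiliary results you invoke---Lemma~\ref{cor_omega_t} and Lemma~\ref{lemma_o_razb_dereva1}---are themselves imported into the present paper from that reference, and the strategy of partitioning the Whitney tree by Lemma~\ref{lemma_o_razb_dereva1} with $\Phi({\bf W})=\sum_{\xi\in{\bf W}}{\rm mes}\,F(\xi)$, followed by dyadic splitting of over-large singleton cubes, is exactly the mechanism used there. So you have recovered the cited argument.

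Two small remarks. First, Lemma~\ref{lemma_o_razb_dereva1} is quoted here only for \emph{finite} trees (the paper says explicitly ``here we use the special case of finite trees''), whereas the Whitney tree of a John domain is typically infinite; you should either cite the general version from \cite{vas_john} directly or add a routine truncation-and-limit step, since $\Phi({\bf V}({\cal T}))={\rm mes}\,G<\infty$. Second, for the Sobolev--Poincar\'e step you implicitly use $\delta=r+\frac dq-\frac dp>0$, which is not stated in Lemma~\ref{m_part} itself but is a standing hypothesis throughout the paper; it would be cleaner to make that explicit.
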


\renewcommand{\proofname}{\bf Proof of Theorem \ref{main_sobol}}
\begin{proof}
{\it The upper estimate.} We apply Theorem \ref{main_abstr_th}. As
$X_p(\Omega)$ we take the linear span of $W^r_{p,g}(\Omega)$, as
$Y_q(\Omega)$ we take $L_{q,v}(\Omega)$, and as ${\cal P}(\Omega)$
we take the space of polynomials ${\cal P}_{r-1}(\Omega)$.

From Theorem \ref{appr} it follows that Assumption
\ref{sup1} holds with
\begin{align}
\label{w_pol} w_*(\eta_{j,i})=C(\mathfrak{Z})\cdot
2^{\overline{s}j\left(\beta
-\delta\right)}\Psi(2^{-\overline{s}j})
\end{align}
in the case (\ref{beta}), a),
\begin{align}
\label{w_pol1} w_*(\eta_{j,i}) =C(\mathfrak{Z})\cdot
2^{-\overline{s}\theta\left(\frac 1q-\frac 1p\right)_+j}
j^{-\alpha+\left(\frac 1q-\frac 1p\right)_+}\rho(j)
\end{align}
in the case (\ref{beta}), b).

By (\ref{ghi_g0}), (\ref{psi_cond}), Lemma \ref{sum_lem} and
(\ref{dist_x1}),
\begin{align}
\label{g_v} g(x)\underset{\mathfrak{Z}}{\asymp}
2^{\overline{s}\beta_gj}\Psi_g(2^{-\overline{s}j}), \quad
v(x)\underset{\mathfrak{Z}}{\asymp} 2^{\overline{s}\beta_vj}
\Psi_v(2^{-\overline{s}j}), \quad x\in \Omega_{j,i}.
\end{align}
From (\ref{diam_dist}), (\ref{diam_dj}) and the condition $\Omega_{j,i}\in
{\bf FC}(b_*)$ we get
\begin{align}
\label{mes_om_j} {\rm mes}\, \Omega_{j,i} \underset{a,d}{\asymp}
2^{-\overline{s}dj}.
\end{align}
By Lemma \ref{m_part}, (\ref{g_v}) and (\ref{mes_om_j}),
Assumption \ref{sup2} holds with $\delta_*=\frac{\delta}{d}$ and
$\tilde w_*(\eta_{j,i})=w_*(\eta_{j,i})$ in the case (\ref{beta}),
a), and
\begin{align}
\label{twji} \tilde w_*(\eta_{j,i})= C(\mathfrak{Z})\cdot
2^{-\overline{s}\theta\left(\frac 1q-\frac 1p\right)_+j}
j^{-\alpha}\rho(j)
\end{align}
in the case (\ref{beta}), b). Here $c_2=c_2(\mathfrak{Z})$.

Let us prove that Assumption \ref{sup3} holds. Then we apply
Theorem \ref{main_abstr_th} and obtain upper estimates of widths.

{\it Case 1.} Suppose that $\theta>0$ and the condition
(\ref{beta}), a) holds. Set $k_*=\overline{s}$, ${\cal
A}_{t,i}=\{\eta_{t,i}\}$. By (\ref{w_pol}), the relations
(\ref{w_s_2}) and (\ref{til_w_s_2}) hold with
$\lambda_*=\mu_*=\delta-\beta$, $u_*(y)=\Psi(y^{-1})$. The
relation (\ref{nu_t_k}) holds with $\gamma_*=\theta$,
$\psi_*(y)=\psi_\Lambda(y)=\frac{1}{\Lambda
\left(\frac{1}{y}\right)}$ (see (\ref{sob_nu_t})). Then
$\beta_*=\frac{1}{\theta}$. The function $\varphi_*=
\varphi_{\theta,\psi_\Lambda}$ is defined according to Lemma
\ref{obr} (see the notation on the page \pageref{phi_lam}).
Finally, (\ref{bipf4684gn}), (\ref{2l}) and (\ref{2ll}) hold. Here
$c_3=c_3(\mathfrak{Z})$. This implies the desired estimates.

{\it Case 2.} Suppose that $\theta>0$ and the condition
(\ref{beta}), b) holds. Set $k_*=\overline{s}$, ${\cal
A}_{t,i}=\{\eta_{t,i}\}$. From (\ref{phi_g}) and (\ref{sob_nu_t})
it follows that $\gamma_*=\theta$,
$$
\psi_*(t)=\frac{1}{\Lambda \left(\frac 1t\right)}=|\log
t|^{-\gamma} \tau^{-1}(|\log t|).
$$
Then $\beta_*=\frac{1}{\theta}$. By Lemma \ref{log}, there exists
$t_*(\mathfrak{Z})>1$ such that
\begin{align}
\label{ph_t} \varphi_*(t)\underset{\mathfrak{Z}} {\asymp}(\log
t)^{\frac{\gamma}{\theta}} \tau^{\frac{1} {\theta}}(\log t), \quad
t\ge t_*(\mathfrak{Z}).
\end{align}

Let $p>q$. From (\ref{w_pol1}) it follows that
$\lambda_*=\mu_*=\theta\left(\frac 1q-\frac 1p\right)$,
$\lambda_*\beta_*=\frac 1q-\frac 1p<\frac{\delta}{d}$. Therefore,
by assertion 1 of Theorem \ref{main_abstr_th}
$$\vartheta_n (W^r_{p,g}(\Omega), \, L_{q,v}(\Omega))
\stackrel{(\ref{vrth_n_pgq})}{\underset{\mathfrak{Z}}{\lesssim}}
\sigma_*(n)=u_*(n^{\beta_*}\varphi_*(n)) \varphi_*^{-\mu_*}(n).$$
From (\ref{w_pol1}) it follows that we can take $u_*(t)=(\log
t)^{-\alpha+\frac 1q-\frac 1p}\rho(\log t)$. Hence,
$$u_*(n^{\beta_*}\varphi_*(n)) \underset{\mathfrak{Z}}{\asymp} u_*(n)=(\log
n)^{-\alpha+\frac 1q-\frac 1p}\rho(\log n),$$ $$\varphi_*
^{-\mu_*}(n)\stackrel{(\ref{ph_t})}{\underset{\mathfrak{Z}}{\asymp}}
(\log n)^{-\gamma\left(\frac 1q-\frac 1p\right)} \tau ^{-\frac
1q+\frac 1p} (\log n).$$ Finally, (\ref{bipf4684gn}), (\ref{2l})
and (\ref{2ll}) hold. This implies the desired estimate.

Let $p\le q$. Then $\lambda_*=0$, $u_*(t)=(\log t)^{-\alpha}
\rho(\log t)$; in particular, (\ref{2l}) holds. Apply assertion 1
of Theorem \ref{main_abstr_th}. Since
$\lambda_*\beta_*=0<\frac{\delta}{d}$,
$$
\vartheta_n (W^r_{p,g}(\Omega), \, L_{q,v}(\Omega))
\stackrel{(\ref{vrth_n_pleq})}{\underset{\mathfrak{Z}}{\lesssim}}
u_*(n^{\beta_*} \varphi_*(n)) \varphi_*^{-\lambda_*}(n)
\underset{\mathfrak{Z}}{\lesssim} (\log n)^{-\alpha} \rho(\log n).
$$

{\it Case 3.} Suppose that $\theta=0$ and the condition (\ref{beta}),
b) holds. Then
\begin{align}
\label{ctij} {\rm card}\, \tilde I_j \stackrel{(\ref{phi_g}),
(\ref{sob_nu_t})}{\underset {\mathfrak{Z}} {\lesssim}}
(\overline{s}j)^{-\gamma}\tau^{-1}(\overline{s}j).
\end{align}
We set $k_*=1$ and define $\Gamma_t$ by
$${\bf V}(\Gamma_t)=\{\eta_{j,i}:\; j\ge j_*, \;\; 2^t\le j<2^{t+1}, \; i\in
\tilde I_j\}.$$ Then $\nu_t\stackrel{(\ref{ctij})}{\underset
{\mathfrak{Z}}{\lesssim}} 2^{(1-\gamma)t}\tau^{-1}(2^t)=:
\overline{\nu}_t$. Hence, (\ref{nu_t_k}) holds with
$\gamma_*=1-\gamma$, $\psi_*(x)=\tau^{-1}(x)$; therefore,
$\beta_*=\frac{1}{1-\gamma}$. Further,
$\lambda_*=\alpha-\left(\frac 1q-\frac 1p\right)_+$,
$u_*(x)=\rho(x)$ by (\ref{w_pol1}) and the condition $\theta=0$.

Let $p>q$. From (\ref{twji}) it follows that (\ref{til_w_s_2})
holds with $\mu_*=\alpha$. From the definition of $\tilde I_j$,
$\hat J_t$ and $\Gamma_t$, we have $\hat J_t=\tilde I_{2^t}$. By
(\ref{ctij}), ${\rm card}\, \hat J_t \underset
{\mathfrak{Z}}{\lesssim} 2^{-\gamma t}\tau^{-1}(2^t)$. Therefore,
(\ref{bipf4684gn}) holds.

Let $p\le q$. Then $\lambda_*=\mu_*=\alpha$,
$\lambda_*\beta_*=\frac{\alpha}{1-\gamma}$.

Notice that (\ref{2l}) and (\ref{2ll}) hold. It remains to apply
Theorem \ref{main_abstr_th}.

{\it Case 4}. Suppose that $\theta=0$ and the condition
(\ref{beta}), a) holds. Set $\tilde g(x)=\varphi_{\tilde g}({\rm
dist}_{|\cdot|}(x, \, \Gamma))$, $\varphi_{\tilde
g(t)}=t^{-\beta_{\tilde g}} |\log t|^{-\alpha_{\tilde g}}$,
$\beta_{\tilde g}+\beta_v=\delta$. Then $W^r_{p,g}(\Omega)\subset
W^r_{p,\tilde g}(\Omega)$ and $\left\| \frac{\nabla^r f}{\tilde
g}\right\|_{L_p(\Omega)} \underset{r,d,p, g,\tilde g}{\lesssim}
\left\| \frac{\nabla^r f}{g} \right\|_{L_p(\Omega)}$. It remains
to choose $\alpha_{\tilde g}$ so that the inequality
$\frac{\alpha_{\tilde g}+\alpha_v}{1-\gamma}> \frac{\delta} {d}$
holds (which implies (\ref{g0ag})), and to apply the result
obtained in the case 3.

{\it The lower estimate.} Recall that $R={\rm diam}\, \Omega$ (see
page \pageref{r_def}). Let $\xi_*$ be the minimal vertex of ${\cal
T}$, and let $l_*$ be the length of the side of the cube
$F(\xi_*)$. From the formulas (4.5), (4.6), (4.8) and (4.9) in
\cite{vas_vl_raspr} it follows that $l_*\underset{\mathfrak{Z}_*}
{\asymp} 1$ and for any $x\in F(\xi_*)$ the order equalities
$g(x)\underset {\mathfrak{Z}_*}{\asymp} 1$ and
$v(x)\underset{\mathfrak{Z}_*} {\asymp} 1$ hold. Hence,
\begin{align}
\label{vrt_n_gtrs_1cube}
\vartheta_n(W^r_{p,g}(\Omega), \, L_{q,v}(\Omega)) \underset{\mathfrak{Z}_*}
{\gtrsim} \vartheta _n(W^r_p([0, \, 1]^d), \, L_q([0, \, 1]^d)).
\end{align}
We apply Theorem \ref{sob_dn} and obtain the desired estimate in
the case 4.

In cases 1, 2, 3 we apply the construction given at the end of the
paper \cite{vas_vl_raspr2}. For $t\in \N$, $t\ge
t_0(\mathfrak{Z}_*)$, let the index sets $J_t$ and the functions
$\psi _{t,j} \in W^r_{p,g}(\Omega)$ ($j\in J_t$) be as defined in
\cite{vas_vl_raspr2}. These functions satisfy the following
conditions: $\left\|\frac{\nabla^r \psi_{t,j}} {g}\right\|
_{L_p(\Omega)}=1$,
\begin{align}
\label{norm_psi_tj} \|\psi_{t,j}\|_{L_{q,v}(\Omega)}
\underset{\mathfrak{Z}}{\asymp}
2^{k_{**}t(\beta-\delta)}\Psi(2^{-k_{**}t}),
\end{align}
${\rm supp}\, \psi_{t,j}\subset \Delta _{t,j}$, the cubes
$\Delta_{t,j}$ do not overlap pairwise for different $(t, \, j)$,
$k_{**}\in \N$, $k_{**} \underset{\mathfrak{Z}}{\asymp} 1$,
\begin{align}
\label{card_j_t} {\rm card}\, J_t \underset{\mathfrak{Z}}{\asymp}
\frac{2^{k_{**}t\theta}}{\Lambda (2^{-k_{**}t})}.
\end{align}

Consider cases 1 and 2. Given $\nu\in \N$, we set
\begin{align}
\label{t_st} t_*=t_*(\nu)=\lfloor
k_{**}^{-1}\log(\nu^{1/\theta}\varphi
_{\theta,\psi_\Lambda}(\nu))\rfloor.
\end{align}
From Lemma \ref{sum_lem} it follows that ${\rm card}\, J_{t_*}
\stackrel{(\ref{card_j_t})}{\underset{\mathfrak{Z}}{\asymp}}
(\nu^{1/\theta}\varphi_{\theta,\psi_\Lambda}(\nu))^\theta
\psi_\Lambda(\nu^{1/\theta}\varphi_{\theta,\psi_\Lambda}(\nu))=
\nu$. We choose $\nu=\nu(n)$ so that
\begin{align}
\label{2n_jt} 2n\le {\rm card}\, J_{t_*}
\underset{\mathfrak{Z}}{\lesssim} n.
\end{align}
Then $\nu\underset{\mathfrak{Z}}{\asymp} n$. From Lemma
\ref{low_est} it follows that
\begin{align}
\label{vrth_n_theta3} \vartheta _n(W^r_{p,g}(\Omega), \,
L_{q,v}(\Omega)) \stackrel{(\ref{norm_psi_tj})} {\underset
{\mathfrak{Z}} {\gtrsim}} (n^{1/\theta}\varphi_{\theta,
\psi_\Lambda} (n))^{\beta-\delta} \Psi(n^{-\frac{1}{\theta}}
\varphi^{-1}_{\theta,\psi_\Lambda}(n)) \cdot \vartheta_n(B^{2n}_p,
\, l_q^{2n}).
\end{align}
For $p<q$, $\hat q>2$ we also take
$t_{**}=[k_{**}^{-1}\log(\nu^{\frac {\hat q}{2\theta}}\varphi
_{\theta,\psi_\Lambda}(\nu^{\hat q/2}))]$. Then ${\rm card}\,
J_{t_{**}} \stackrel{(\ref{card_j_t})} {\underset{\mathfrak{Z}}
{\asymp}} \nu^{\hat q/2}$. We choose $\nu=\nu(n)$ so that
$$
2n^{\hat q/2}\le {\rm card}\, J_{t_{**}} \underset{\mathfrak{Z}}
{\lesssim} n^{\hat q/2}.
$$
We apply Lemma \ref{low_est} together with Theorem \ref{glus_trm}
and get
\begin{align}
\label{vrth_n_theta4} \vartheta _n(W^r_{p,g}(\Omega), \,
L_{q,v}(\Omega)) \stackrel{(\ref{norm_psi_tj})}
{\underset{\mathfrak{Z}} {\gtrsim}} (n^{\hat
q/2\theta}\varphi_{\theta,\psi_\Lambda}(n^{\hat
q/2}))^{\beta-\delta} \Psi(n^{-\frac{\hat
q}{2\theta}}\varphi^{-1}_{\theta,\psi_\Lambda}(n^{\hat q/2})).
\end{align}
Consider case 1. From (\ref{vrt_n_gtrs_1cube}),
(\ref{vrth_n_theta3}), (\ref{vrth_n_theta4}),
(\ref{width_pietsch_stesin}) and Theorems \ref{glus_trm} and
\ref{sob_dn} we obtain the desired estimate.

Consider case 2. Let $p\le q$. If $p=q$ or $p<q$, $\hat q\le 2$,
we have $\vartheta _n(B_p^{2n}, \, l_q^{2n}) \underset{p,q}
{\asymp} 1$. Since $\beta=\delta$, we get
$$
\vartheta _n(W^r_{p,g}(\Omega), \, L_{q,v}(\Omega))
\stackrel{(\ref{phi_g}),(\ref{vrth_n_theta3})}{\underset{\mathfrak{Z}}
{\gtrsim}}
$$
$$
\gtrsim\log ^{-\alpha}(n^{1/\theta}
\varphi_{\theta,\psi_\Lambda}(n)) \rho(\log [n
^{1/\theta}\varphi_{\theta,\psi_\Lambda}(n)])
\underset{\mathfrak{Z}}{\asymp} (\log n)^{-\alpha} \rho(\log n)
$$
(see Lemma \ref{sum_lem}). If $p<q$ and $\hat q>2$, we get
$$
\vartheta _n(W^r_{p,g}(\Omega), \, L_{q,v}(\Omega))
\stackrel{(\ref{phi_g}),(\ref{vrth_n_theta4})}{\underset{\mathfrak{Z}}
{\gtrsim}}
$$
$$
\gtrsim \log ^{-\alpha}(n^{\frac{\hat q}{2\theta}}
\varphi_{\theta,\psi_\Lambda}(n^{\hat q/2})) \rho(\log
[n^{\frac{\hat q}{2\theta}} \varphi _{\theta,\psi_\Lambda}(n^{\hat
q/2})]) \underset{\mathfrak{Z}}{\asymp} (\log n)^{-\alpha}
\rho(\log n).
$$

Let $p>q$. Take $t_*$ such that (\ref{t_st}) and
(\ref{2n_jt}) hold. Then, by Lemma \ref{sum_lem},
$t_*\underset{\mathfrak{Z}}{\asymp} \log n$. Since ${\rm card}\,
J_{t_*}>n$, Lemma \ref{low_est} implies that
$$
\vartheta _n(W^r_{p,g}(\Omega), \, L_{q,v}(\Omega))
\stackrel{(\ref{tn_low_est_diag}),(\ref{norm_psi_tj})}{\underset{\mathfrak{Z}}
{\gtrsim}} \left(\sum \limits _{t=t_*+1}^{2t_*} \sum \limits
_{j\in J_t} \left(2^{k_{**}t(\beta-\delta)}
\Psi(2^{-k_{**}t})\right) ^{\frac{pq}{p-q}}\right)^{\frac 1q-\frac
1p} \stackrel{(\ref{card_j_t})}{\underset{\mathfrak{Z}} {\gtrsim}}
$$
$$
\gtrsim \left(\sum \limits _{t=t_*+1}^{2t_*}
\left(2^{k_{**}t(\beta-\delta)} \Psi(2^{-k_{**}t})
2^{k_{**}t\theta\left(\frac 1q-\frac 1p\right)} \Lambda ^{\frac
1p-\frac 1q}(2^{k_{**}t})\right)^{\frac{pq}{p-q}}\right)^{\frac
1q-\frac 1p} \stackrel{(\ref{phi_g})} {\underset {\mathfrak{Z}}
{\asymp}}
$$
$$
\asymp\left(\sum \limits _{t=t_*+1}^{2t_*}
\left(t^{-\alpha}\rho(t) t^{\gamma\left(\frac 1p-\frac 1q\right)}
\tau ^{\frac 1p-\frac 1q}(t)\right)
^{\frac{pq}{p-q}}\right)^{\frac 1q-\frac 1p}
\underset{\mathfrak{Z}}{\asymp}
$$
$$
\asymp t_*^{(1-\gamma)\left(\frac 1q-\frac 1p\right)-\alpha} \rho(t_*)
\tau ^{\frac 1p-\frac 1q}(t_*) \underset{\mathfrak{Z}}{\asymp}
(\log n)^{(1-\gamma)\left(\frac 1q-\frac 1p\right)-\alpha} \rho(\log n)
\tau ^{\frac 1p-\frac 1q}(\log n).
$$

Consider case 3. Let $p\le q$. Then
$$
\|\psi _{m,j}\| _{L_{q,v}
(\Omega)} \stackrel{(\ref{phi_g}),(\ref{norm_psi_tj})}
{\underset{\mathfrak{Z}}{\asymp}} m^{-\alpha} \rho(m).
$$
For $2^t\le m\le 2^{t+1}$ we get
\begin{align}
\label{psi_mj} \|\psi _{m,j}\| _{L_{q,v} (\Omega)}
\underset{\mathfrak{Z}} {\asymp} 2^{-t\alpha} \rho(2^t),
\end{align}
\begin{align}
\label{cjm} {\rm card}\, J_m
\stackrel{(\ref{phi_g}),(\ref{card_j_t})}{\underset
{\mathfrak{Z}}{\asymp}} m^{-\gamma} \tau^{-1}(m) \underset
{\mathfrak{Z}}{\asymp} 2^{-\gamma t}(\tau(2^t))^{-1},
\end{align}
\begin{align}
\label{m2t_jm} \sum \limits _{m=2^t}^{2^{t+1}} {\rm card}\, J_m
\underset{\mathfrak{Z}}{\asymp}
 2^{(1-\gamma)t}(\tau(2^t))^{-1}.
\end{align}
For $\nu\in \N$ we set
\begin{align}
\label{t_st1} t_*=\left[\log \left(\nu^{\frac{1}{1-\gamma}}
\varphi_{1-\gamma,\tau^{-1}}(\nu)\right)\right].
\end{align}
Then
\begin{align}
\label{21mg} 2^{(1-\gamma)t_*}(\tau(2^{t_*}))^{-1}
\underset{\mathfrak{Z}}{\asymp} \nu.
\end{align}
We choose $\nu=\nu(n)$ so that
\begin{align}
\label{2n_sum} 2n\le \sum \limits _{m=2^{t*}}^{2^{t_*+1}} {\rm
card}\, J_m \underset{\mathfrak{Z}}{\lesssim} n, \quad \sum
\limits _{m=2^{t*-1}}^{2^{t_*}} {\rm card}\, J_m>n
\end{align}
(then $\nu \stackrel{(\ref{m2t_jm}), (\ref{21mg})}
{\underset{\mathfrak{Z}}{\asymp}} n$). Apply Lemma \ref{low_est}.
By (\ref{psi_mj}) and (\ref{t_st1}), we get
\begin{align}
\label{case4_low_t_st}
\vartheta _n(W^r_{p,g}(\Omega), \, L_{q,v}(\Omega)) \underset{\mathfrak{Z}}
{\gtrsim} n^{-\frac{\alpha}{1-\gamma}} \varphi^{-\alpha}_{1-\gamma,\tau^{-1}}
(n)\rho\left(n^{\frac{1}{1-\gamma}}\varphi_{1-\gamma,\tau^{-1}}(n)\right)
\vartheta_n(B_p^{2n}, \, l_q^{2n}).
\end{align}
For $p<q$, $\hat q>2$ we also consider
$t_{**}=\left[\log \left(\nu^{\frac{\hat q}
{2(1-\gamma)}} \varphi_{1-\gamma,\tau^{-1}}
(\nu^{\frac{\hat q}{2}})\right)\right]$.
Then
$$
2^{(1-\gamma)t_{**}}(\tau(2^{t_{**}}))^{-1}
\underset{\mathfrak{Z}}{\asymp} \nu ^{\frac{\hat q}{2}}.
$$
We choose $\nu=\nu(n)$ so that
$$2n^{\frac{\hat q}{2}}\le \sum \limits _{m=2^{t_{**}}}^{2^{t_{**}+1}}
{\rm card}\, J_m \underset{\mathfrak{Z}}{\lesssim} n^{\frac{\hat
q}{2}}.$$ Then $\nu \stackrel{(\ref{m2t_jm})} {\underset
{\mathfrak{Z}} {\asymp}} n$. By Lemma \ref{low_est}, Theorem
\ref{glus_trm} and (\ref{psi_mj}),
$$
\vartheta _n(W^r_{p,g}(\Omega), \, L_{q,v}(\Omega)) \underset{\mathfrak{Z}}
{\gtrsim} n^{-\frac{\hat q\alpha}{2(1-\gamma)}} \varphi^{-\alpha}_{1-\gamma,\tau^{-1}}
(n^{\frac{\hat q}{2}})\rho\left(n^{\frac{\hat q}{2(1-\gamma)}}
\varphi_{1-\gamma,\tau^{-1}}(n^{\frac{\hat q}{2}})\right).
$$

This together with (\ref{vrt_n_gtrs_1cube}), Theorem
\ref{sob_dn} and (\ref{case4_low_t_st})
implies the desired estimate.

Let $p>q$. We take $t_*$ such that (\ref{t_st1}) and
(\ref{2n_sum}) hold. Then $\nu \underset{\mathfrak{Z}} {\asymp}
n$. Apply (\ref{tn_low_est_diag}) taking into account the second
inequality in (\ref{2n_sum}). We have
$$
\vartheta _n(W^r_{p,g}(\Omega), \, L_{q,v}(\Omega))
\stackrel{(\ref{psi_mj})} {\underset{\mathfrak{Z}} {\gtrsim}}
\left(\sum \limits _{m=2^{t_*}+1}^{2^{t_*+1}}\sum \limits _{j\in
J_m} \left(2^{-t_*\alpha}\rho(2^{t_*})\right)
^{\frac{pq}{p-q}}\right)^{\frac 1q-\frac 1p}
\stackrel{(\ref{cjm})}{\underset{\mathfrak{Z}}{\asymp}}
$$
$$
\asymp 2^{-t_*\alpha}\rho(2^{t_*})\left(2^{(1-\gamma)t_*}\tau^{-1}
(2^{t_*})\right)^{\frac 1q-\frac 1p}
\stackrel{(\ref{t_st1}),(\ref{21mg})}
{\underset{\mathfrak{Z}}{\asymp}}$$$$\asymp
n^{-\frac{\alpha}{1-\gamma}+\frac 1q-\frac 1p}
\rho(n^{\frac{1}{1-\gamma}} \varphi_{1-\gamma,\tau^{-1}}(n))
\varphi^{-\alpha}_{1-\gamma,\tau^{-1}}(n).
$$
This completes the proof.
\end{proof}
\renewcommand{\proofname}{\bf Proof}
\section{Estimates for widths of function classes
on a metric and combinatorial tree}

First we give necessary notations.

Let $(\cal{T}, \, \xi_*)$ be a tree, let $\Delta:{\bf
E}({\cal{T}})\rightarrow 2^{\R}$ be a mapping such that for any
$\lambda\in {\bf E}(\cal{T})$ the set $\Delta(\lambda)=[a_\lambda,
\, b_\lambda]$ is a non-degenerate segment. By a metric tree we
mean
$$
\mathbb{T}=({\cal T}, \, \Delta)=\{(t, \, \lambda):\, t\in
[a_\lambda, \, b_\lambda], \; \lambda\in {\bf E}({\cal{T}})\};
$$
here we suppose that if $\xi'\in {\bf V}_1(\xi)$, $\xi''\in {\bf
V}_1(\xi')$, $\lambda=(\xi, \, \xi')$, $\lambda'=(\xi', \,
\xi'')$, then $(b_\lambda, \, \lambda)=(a_{\lambda'}, \,
\lambda')$. The distance between points of $\mathbb{T}$ is defined
as follows: if $(\xi_0, \, \xi_1, \, \dots, \, \xi_n)$ is a simple
path in the tree $\mathcal{T}$, $n\ge 2$, $\lambda_i=(\xi_{i-1},
\, \xi_i)$, $x=(t_1, \, \lambda_1)$, $y=(t_n, \, \lambda_n)$, then
we set
$$
|y-x|_{\mathbb{T}}=|b_{\lambda_1}-t_1|+\sum \limits _{i=2}^{n-1}
|b_{\lambda_i}-a_{\lambda_i}| +|t_n-a_{\lambda_n}|;
$$
if $x=(t', \, \lambda)$, $y=(t'', \, \lambda)$, then we set
$|y-x|_{\mathbb{T}}=|t'-t''|$.

We say that $\lambda<\lambda'$ if $\lambda=(\omega, \, \xi)$,
$\lambda'=(\omega', \, \xi')$ and $\xi\le \omega'$; $(t', \,
\lambda')\le(t'', \, \lambda'')$ if either $\lambda'<\lambda''$ or
$\lambda'=\lambda''$, $t'\le t''$. If $(t', \, \lambda')\le (t'',
\, \lambda'')$ and $(t', \, \lambda') \ne (t'', \, \lambda'')$,
then we write $(t', \, \lambda')<(t'', \, \lambda'')$. For $a$,
$x\in \mathbb{T}$, $a\le x$, we set $[a, \, x]=\{y\in
\mathbb{T}:\, a\le y\le x\}$.

Let $A_\lambda\subset \Delta(\lambda)$, $\lambda\in {\bf
E}(\mathcal{T})$. We say that the subset $\mathbb{A}=\{(t, \,
\lambda):\; \lambda\in {\bf E}(\mathcal{T}), \; t\in A_\lambda\}$
is measurable if $A_\lambda$ is measurable for any $\lambda\in
{\bf E}(\mathcal{T})$. Its Lebesgue measure is defined by
$$
{\rm mes}\, \mathbb{A}=\sum \limits _{\lambda\in {\bf
E}(\mathcal{T})} {\rm mes}\, A_\lambda.
$$

Let $f:\mathbb{A}\rightarrow \R$. For $\lambda\in {\bf
E}(\mathcal{T})$ we define the function $f_\lambda:A_\lambda
\rightarrow \R$ by $f_\lambda(t)=f(t, \, \lambda)$. We say that
the function $f:\mathbb{A}\rightarrow \R$ is Lebesgue integrable
if $f_\lambda$ is Lebesgue integrable for any $\lambda\in {\bf
E}(\mathcal{T})$ and $\sum \limits _{\lambda \in {\bf
E}(\mathcal{T})} \int \limits _{A_\lambda} |f_\lambda(t)|\, dt<
\infty$, and write
$$
\int \limits _{\mathbb{A}} f(x)\, dx=\sum \limits _{\lambda \in
{\bf E}(\mathcal{T})} \int \limits _{A_\lambda} f_\lambda(t)\, dt.
$$

Let $\mathbb{T}=({\cal T}, \, \Delta)$ be a metric tree, $x_0\in
\mathbb{T}$, and let $g$, $v: \mathbb{T} \rightarrow (0, \,
\infty)$ be measurable functions. The two-weighted Hardy-type
operator $I_{g,v,x_0}$ on the metric tree $\mathbb{T}$ is defined
by
$$
I_{g,v,x_0}f(x)=v(x)\int \limits _{[x_0, \, x]} g(t)f(t)\, dt.
$$
The criterion of boundedness for such operator and order estimate
of its norm (up to some absolute constants) were obtained by
Evans, Harris and Pick \cite{ev_har_pick}.

Let us formulate the main result of this section.

Let $\mathbb{T}=({\cal T}, \, \Delta)$ be a metric tree, and let
$g$, $v: \mathbb{T} \rightarrow (0, \, \infty)$ be measurable
functions. We suppose that the following conditions hold.
\begin{enumerate}
\item Let a regular partition $\{({\cal T}_{k,i}, \,
\xi_{k,i})\}_{k\in \Z_+, \, i\in I_k}$ of the tree ${\cal T}$ and
a function $h\in \mathbb{H}$ be given, and let $h$ be as defined
by (\ref{def_h}) in some neighborhood of zero. Let $\tilde{\cal
T}_{k,i}$ be a tree with vertex set ${\bf V}(\tilde{\cal T}_{k,i})
={\bf V}({\cal T}_{k,i})\cup \{\xi_{k+1,i'}:\; \xi_{k+1,i'}>\xi
_{k,i}\}$.
\item There exists $c_*\ge 1$ such that for any $k$,
$l\in \Z_+$, $i\in I_k$
\begin{align}
\label{card_metr} {\rm card}\, \{i'\in I_{k+l}:\;
\xi_{k+l,i'}>\xi_{k,i}\}\le c_* \frac{h(2^{-k})} {h(2^{-k-l})}.
\end{align}
\item For any $k\in \Z_+$, $i\in I_k$
\begin{align}
\label{norm_g_metr} \|g\|_{L_{p'}(\mathbb{T}_{k,i})}\le c_*\cdot
2^{\left(\beta_g -\frac{1}{p'}\right)k}\Psi_g(2^{-k})=:c_*u_k,
\end{align}
\begin{align}
\label{norm_v_metr} \|v\|_{L_q(\mathbb{T}_{k,i})}\le c_*\cdot
2^{\left(\beta_v -\frac 1q\right)k}\Psi_v(2^{-k})=:c_*w_k,
\end{align}
where functions $\Psi_g$, $\Psi_v$ satisfy (\ref{psi_cond}) and
$\mathbb{T}_{k,i}=(\tilde{\cal T}_{k,i}, \, \Delta)$.
\item The relations (\ref{muck}) and
(\ref{beta}) with $d=1$ and $\delta=1+\frac 1q-\frac 1p$ hold;
moreover, in the case (\ref{beta}), b) the relations
(\ref{phi_g}), (\ref{ll}), (\ref{g0ag}) hold.
\end{enumerate}

Denote $\mathfrak{Z}=(p, \, q, \, h, \, g, \, v, \, c_*)$.

Let $x_0$ be the minimal point in $\mathbb{T}$. We set
$$W^1_{p,g}(\mathbb{T})=\{I_{g,1,x_0}f:\;
\|f\|_{L_p(\mathbb{T})}\le 1\}.$$

\begin{Trm}
\label{main_metr1} We set $d=1$, $\delta=1-\frac 1p+\frac 1q$.
\begin{enumerate}
\item Suppose that $\theta>0$ and the condition (\ref{beta}), a) holds.
Let $p\ge q$ or $\hat q\le 2$. Define $\theta_1$, $\theta_2$,
$\sigma_1$ and $\sigma_2$ by (\ref{case1_theta_j_0}) and
(\ref{case1_sigma_j_0}). Suppose that $\theta_1\ne \theta_2$,
$j_*\in \{1, \, 2\}$, $\theta_{j_*}=\min\{\theta_1, \,
\theta_2\}$. Then
\begin{align}
\label{vrth_n_def} \vartheta_n(W^1_{p,g}(\mathbb{T}), \,
L_{q,v}(\mathbb{T})) \underset{\mathfrak{Z}}{\lesssim}
n^{-\theta_{j_*}} \sigma_{j_*}(n).
\end{align}
Let $p<q$ and $\hat q>2$. Define $\theta_j$ and $\sigma_j$ ($j=1,
\, 3, \, 4$) by (\ref{case1qg2_th12}), (\ref{case1qg2_th34}),
(\ref{case1def_sigma_qg2}). Suppose that there exists $j_*\in \{1,
\, 3, \, 4\}$ such that (\ref{thj_min_case1}) holds. Then we have
(\ref{vrth_n_def}).
\item Suppose that $\theta>0$ and the condition (\ref{beta}), b) holds.
Then $$\vartheta_n(W^1_{p,g}(\mathbb{T}), \, L_{q,v}(\mathbb{T}))
\underset{\mathfrak{Z}}{\lesssim} (\log
n)^{-\alpha+(1-\gamma)\left(\frac 1q-\frac 1p\right)_+} \rho(\log
n)\tau^{-\left(\frac 1q-\frac 1p\right)_+}(\log n).$$
\item Suppose that $\theta=0$ and the condition (\ref{beta}), b) holds.
Let $p\ge q$ or $\hat q\le 2$, let $\theta_1$, $\theta_2$,
$\sigma_1$ and $\sigma_2$ be defined by (\ref{case3_pleq_theta_j})
and (\ref{case3_pleq_sigma}), and let $\theta_1\ne \theta_2$,
$j_*\in \{1, \, 2\}$, $\theta_{j_*}=\min\{\theta_1, \,
\theta_2\}$. Then (\ref{vrth_n_def}) holds. Let $p<q$ and $\hat
q>2$, let $\theta_j$ and $\sigma_j$ ($j=1, \, 3, \, 4$) be defined
by (\ref{case3_qg2_th12}), (\ref{case3_qg2_th34}),
(\ref{case3_qg2_sigma}). Suppose that there exists $j_*\in \{1, \,
3, \, 4\}$ such that (\ref{case3_qg2_ineq}) holds. Then we have
(\ref{vrth_n_def}).
\item Suppose that $\theta=0$ and (\ref{beta}), a) holds.
Then $\vartheta_n(W^1_{p,g}(\mathbb{T}), \, L_{q,v}(\mathbb{T}))
\underset{\mathfrak{Z}}{\lesssim} n^{-\theta_{p,q,1,1}}$, where
$\theta_{p,q,1,1}$ is defined by (\ref{sob_dn_theta_pqrd}) for
$r=1$, $d=1$.
\end{enumerate}
Suppose, in addition, that the following conditions hold:
\begin{itemize}
\item ${\rm card}\, I_l\ge \frac{1}{c_* h(2^{-l})}$ for any $l\in \Z_+$,
\item for any $k\in \Z_+$, $i\in I_k$ there exists a function
$\psi_{k,i}=I_{g,1,x_0}f_{k,i}$ such that ${\rm supp}\, \psi_{k,i}
\subset \mathbb{T}_{k,i}$ and $\|f_{k,i}\| _{L_p(\mathbb{T})}=1$,
$\|\psi_{k,i}\|_{L_{q,v}(\mathbb{T})} \ge c_*^{-1} u_kw_k$.
\end{itemize}
Then similar lower estimates are true for
$\vartheta_n(W^1_{p,g}(\mathbb{T}), \, L_{q,v}(\mathbb{T}))$.
\end{Trm}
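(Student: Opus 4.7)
The plan is to mirror the strategy used to prove Theorem \ref{main_sobol}: reduce to the abstract framework of Theorem \ref{main_abstr_th} by setting up the right tree, approximation operators, and scaling parameters, and then derive lower bounds from the given bump functions via Lemma \ref{low_est}.

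For the upper estimate, I would take $X_p(\mathbb{T})={\rm span}\, W^1_{p,g}(\mathbb{T})$, $Y_q(\mathbb{T})=L_{q,v}(\mathbb{T})$, and ${\cal P}(\mathbb{T})$ the space of constants (since $r=1$, so $r_0=1$). The tree ${\cal A}$ of the abstract setup is indexed by $(k,i)$, $k\in\Z_+$, $i\in I_k$, with the partition $\hat F(\eta_{k,i})=\mathbb{T}_{k,i}\setminus\bigsqcup_{i'}\mathbb{T}_{k+1,i'}$ (so that the pieces partition $\mathbb{T}$). Assumption \ref{sup1} is verified by applying Theorem \ref{two_weight_hardy_tree} to the subtree rooted at $\xi_{k,i}$ with $\kappa_u=\beta_g-\tfrac{1}{p'}$, $\kappa_w=\beta_v-\tfrac1q$ and $m_*=1$: this produces a constant (in fact $0$, since the Hardy operator applied to $f$ vanishes at $x_0$) of the form $P_{\mathbb{T}_{k,i}}$ with $w_*(\eta_{k,i})=C(\mathfrak{Z})\cdot 2^{k(\beta-\delta)}\Psi(2^{-k})$ in case (\ref{beta})a) and the corresponding logarithmic expression in case (\ref{beta})b). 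Assumption \ref{sup2} reduces to a classical one-dimensional approximation problem on each segment: for a cube (interval) $E\subset\mathbb{T}_{k,i}$ of length $2^{-k-m}/n$ I use H\"older's inequality together with (\ref{norm_g_metr}), (\ref{norm_v_metr}) to obtain $\|f-P_Ef\|_{L_{q,v}(E)}\le (2^mn)^{-\delta}\tilde w_*(\eta_{k,i})\|f\|_{X_p(E)}$ with $\delta_*=\delta=1+\tfrac1q-\tfrac1p$; this is exactly the analogue of Lemma \ref{m_part} but trivial here since we are on a finite union of intervals.

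To verify Assumption \ref{sup3} I take ${\cal A}_{t,i}=\{\eta_{t,i}\}$ (so $k_*=1$), $\lambda_*=\mu_*=\delta-\beta$ (or the corresponding value in case b)), $u_*(y)=\Psi(y^{-1})$, $\gamma_*=\theta$, $\psi_*=\psi_\Lambda$ in cases 1,2; and in case 3 set ${\bf V}(\Gamma_t)=\{\eta_{j,i}:2^t\le j<2^{t+1}\}$ with $\gamma_*=1-\gamma$, $\psi_*=\tau^{-1}$, $\lambda_*=\alpha-(\tfrac1q-\tfrac1p)_+$, $u_*=\rho$, exactly as in the proof of Theorem \ref{main_sobol}. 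The counting bound (\ref{nu_t_k}) is given by (\ref{card_metr}), and the monotonicity relations (\ref{2l}), (\ref{2ll}) and (\ref{bipf4684gn}) follow from Lemma \ref{sum_lem} together with the parameter ranges imposed by (\ref{muck}), (\ref{beta}), (\ref{g0ag}). Applying Theorem \ref{main_abstr_th} case by case then yields the four upper estimates stated.

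For the lower bound, I would use the functions $\psi_{k,i}$ that are assumed in the hypotheses, together with the counting assumption ${\rm card}\, I_l\ge (c_*h(2^{-l}))^{-1}$. As in the proof of Theorem \ref{main_sobol}, for a chosen level $t_*(n)$ with ${\rm card}\, I_{t_*(n)}\asymp n$ (or $n^{\hat q/2}$ in the Gluskin--Kashin regime), Lemma \ref{low_est} applied to $\{\psi_{t_*(n),i}\}_{i}$ gives $\vartheta_n(W^1_{p,g}(\mathbb{T}),L_{q,v}(\mathbb{T}))\gtrsim u_{t_*(n)}w_{t_*(n)}\vartheta_n(B_p^{2n},l_q^{2n})$; then (\ref{diag_pietsh}) and Theorem \ref{glus_trm} convert this into the lower bound for each regime, while (\ref{tn_low_est_diag}) handles the $p>q$ case by summing over a dyadic range of levels near $t_*(n)$. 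Inserting the specific forms of $u_k$, $w_k$ from (\ref{norm_g_metr}), (\ref{norm_v_metr}) and applying Lemma \ref{sum_lem} (together with Lemma \ref{log} in case b)) produces exactly $n^{-\theta_{j_*}}\sigma_{j_*}(n)$ in each case.

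The main obstacle is the verification of Assumption \ref{sup1}, which requires the Hardy-type two-weighted inequality Theorem \ref{two_weight_hardy_tree} transferred from the discrete tree ${\cal A}$ to the continuous operator $I_{g,1,x_0}$ on $\mathbb{T}$; all other steps are routine once the scaling parameters $(k_*,\lambda_*,\mu_*,\gamma_*,u_*,\psi_*)$ have been identified. In particular, the four-case structure and the logarithmic corrections propagate automatically through Theorem \ref{main_abstr_th}, so the argument is essentially a simplified one-dimensional version of the proof of Theorem \ref{main_sobol}.
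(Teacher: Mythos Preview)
Your overall strategy is the right one and matches the paper: reduce to Theorem \ref{main_abstr_th}, verify Assumption \ref{sup1} by discretizing the Hardy operator over the blocks $\mathbb{T}_{j,i}$ and invoking Theorem \ref{two_weight_hardy_tree}, identify the scaling parameters $(k_*,\lambda_*,\mu_*,\gamma_*,u_*,\psi_*)$ exactly as in the proof of Theorem \ref{main_sobol}, and obtain lower bounds from the bump system $\{\psi_{k,i}\}$ via Lemma \ref{low_est}.

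There is, however, a genuine gap in your verification of Assumption \ref{sup2}. You propose to cut $\hat F(\eta_{k,i})$ into intervals of length $2^{-k-m}/n$ and then invoke (\ref{norm_g_metr}), (\ref{norm_v_metr}) together with H\"older. This fails for two reasons. First, $\hat F(\eta_{k,i})$ is not in general an interval: the tree $\tilde{\cal T}_{k,i}$ may branch, so the set you must partition is a metric subtree. Second, even on a single interval the weights $g$, $v$ are completely arbitrary measurable functions; the only information available is the global bound on $\|g\|_{L_{p'}(\mathbb{T}_{k,i})}$ and $\|v\|_{L_q(\mathbb{T}_{k,i})}$. An equal-length partition gives no control whatsoever on $\|g\|_{L_{p'}(E)}\|v\|_{L_q(E)}$ for a subpiece $E$, so the claimed estimate $\|f-P_Ef\|_{L_{q,v}(E)}\le (2^mn)^{-\delta}\tilde w_*(\eta_{k,i})\|f\|_{X_p(E)}$ does not follow.

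The correct device, which the paper uses, is Lemma \ref{metr_razb} (Solomyak's tree partition) applied with
\[
\Phi(A)=\Bigl(\int_A g^{p'}\Bigr)^{\frac{1}{p'\delta}}\Bigl(\int_A v^{q}\Bigr)^{\frac{1}{q\delta}},\qquad \delta=1-\tfrac1p+\tfrac1q,
\]
which satisfies (\ref{pr1}) by H\"older since $\tfrac{1}{p'\delta}+\tfrac{1}{q\delta}=1$. This produces, for each $m$, a partition ${\bf P}_{m,n}$ of the metric subtree into $\lesssim 2^m n$ subtrees $E$ with $\|g\|_{L_{p'}(E)}\|v\|_{L_q(E)}\lesssim (2^m n)^{-\delta}\|g\|_{L_{p'}(\mathbb{T}_{k,i})}\|v\|_{L_q(\mathbb{T}_{k,i})}$ and with the nesting property (\ref{ceptm}). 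Taking $P_Ef$ to be the value of $I_{g,1,x_0}f$ at the minimal point of $E$ then yields (\ref{fpef}) with $\delta_*=\delta$ and $\tilde w_*(\eta_{k,i})\asymp u_kw_k$. Once Assumption \ref{sup2} is established this way, the rest of your argument goes through unchanged; in particular the observation that for $d=1$ one always has $\tfrac{\hat q\delta}{2}>\delta+\tfrac12-\tfrac{1}{\hat q}$ explains why only $j_*\in\{1,3,4\}$ appears in the $p<q$, $\hat q>2$ case.
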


We need the following result.
\begin{Lem}
\label{metr_razb} Let $\mathbb{A}=({\cal A}, \, \Delta)$ be a
metric tree. Suppose that there is $\hat k\in \N$ such that ${\rm
card}\, {\bf V}_1(\xi)\le \hat k$ for any $\xi\in {\bf V}({\cal
A})$. Let $\Phi$ be a function defined on the family of measurable
subsets of $\mathbb{A}$, satisfying the following conditions:
\begin{align}
\label{pr1} \Phi(A_1\sqcup A_2)\ge \Phi(A_1)+\Phi(A_1), \quad A_1,
\, A_2\subset \mathbb{A},
\end{align}
\begin{align}
\label{pr2} \Phi(A)\to 0\quad \text{as}\quad {\rm mes}\, A\to 0.
\end{align}
Then for any $n\in \N$, $m\in \Z_+$ there exists a partition ${\bf
P}_{m,n}$ of the tree $\mathbb{A}$ into subtrees with the
following properties:
\begin{enumerate}
\item ${\rm card}\, {\bf P}_{m,n}\underset{\hat k}{\lesssim} 2^mn$,
\item $\Phi(E)\underset{\hat k}{\lesssim} \frac{\Phi(\mathbb{A})}{2^mn}$
for any $E\in {\bf P}_{m,n}$,
\item ${\rm card}\, \{E'\in {\bf P}
_{m\pm 1, \, n}:\; {\rm mes}(E\cap E')>0\} \underset{\hat
k}{\lesssim} 1$ for any $E\in {\bf P}_{m,n}$.
\end{enumerate}
\end{Lem}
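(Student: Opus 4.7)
The plan is to reduce Lemma \ref{metr_razb} to the combinatorial case of Lemma \ref{lemma_o_razb_dereva1} via a discretization argument. The obstruction is the continuous structure of the metric tree $\mathbb{A}=(\mathcal{A},\Delta)$, but conditions (\ref{pr1}) and (\ref{pr2}) together let us approximate $\mathbb{A}$ by a finite combinatorial tree on which Lemma \ref{lemma_o_razb_dereva1} applies.

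First I would fix $n$ and a large cutoff $M$, and set $N=2^Mn$. Using (\ref{pr2}), choose $\eta>0$ small enough that ${\rm mes}\,A<\eta$ implies $\Phi(A)<\Phi(\mathbb{A})/(C_0 N)$ for a large absolute constant $C_0$. On each edge $\lambda\in{\bf E}(\mathcal{A})$ subdivide $\Delta(\lambda)$ into subsegments of Lebesgue measure less than $\eta$; by superadditivity (\ref{pr1}) we have $\sum_\lambda \Phi(\Delta(\lambda))\le \Phi(\mathbb{A})<\infty$, so only finitely many edges carry $\Phi$-mass above any positive threshold and only these require explicit subdivision, while branches of negligible total $\Phi$-mass may be absorbed into single pieces. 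This yields a finite combinatorial tree $\mathcal{A}'$ with degree bounded by $\max(\hat k,2)$ together with the superadditive vertex-weight $\tilde\Phi(V)=\Phi\bigl(\bigcup_{v\in V}\sigma_v\bigr)$, where $\sigma_v$ is the elementary segment attached to $v$; finiteness of $\Phi(\mathbb{A})$ gives $\tilde\Phi({\bf V}(\mathcal{A}'))=\Phi(\mathbb{A})$.

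Next I would apply Lemma \ref{lemma_o_razb_dereva1} to $(\mathcal{A}',\tilde\Phi)$ simultaneously for all scales $N_m=2^mn$, $0\le m\le M$, producing a family $\{\mathfrak{S}_{N_m}\}$ of combinatorial partitions; pulling each subtree back through the identification $\mathcal{A}'\leftrightarrow\mathbb{A}$ yields ${\bf P}_{m,n}$. Property~1 of Lemma \ref{lemma_o_razb_dereva1} gives ${\rm card}\,{\bf P}_{m,n}\underset{\hat k}{\lesssim}2^mn$, and its $\tilde\Phi$-estimate gives $\Phi(E)\underset{\hat k}{\lesssim}\Phi(\mathbb{A})/(2^mn)$ for each $E\in{\bf P}_{m,n}$. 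For the overlap condition~3, note that $N_{m\pm1}\le 2N_m$, so Property~2 of Lemma \ref{lemma_o_razb_dereva1} immediately yields that each element of $\mathfrak{S}_{N_m}$ overlaps at most $C(\hat k)$ elements of $\mathfrak{S}_{N_{m\pm1}}$, which transfers to the metric partitions.

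To handle $m>M$, I would iterate: each cell $E\in{\bf P}_{M,n}$ is itself a metric subtree of bounded degree with $\Phi(E)\lesssim\Phi(\mathbb{A})/N$, and the same discretization-and-Lemma~\ref{lemma_o_razb_dereva1} procedure inside $E$ produces further refinements, which, when assembled across all cells $E$, form ${\bf P}_{m,n}$ for $m>M$ as a nested refinement of ${\bf P}_{M,n}$; nesting makes the overlap condition across $m=M$ automatic. The main obstacle is precisely this compatibility across scales together with truncation of the possibly infinite combinatorial structure of $\mathcal{A}$: the superadditivity (\ref{pr1}) together with (\ref{pr2}) are used in an essential way, the first to ensure $\sum_\lambda\Phi(\Delta(\lambda))<\infty$ so that only finitely many edges need attention, and the second to produce a discretization of prescribed fineness.
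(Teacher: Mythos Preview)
The paper does not give a detailed proof of this lemma: it simply records that the single partition ${\bf P}_{0,n}$ was constructed directly on the metric tree by Solomyak \cite{solomyak}, and that the full family $\{{\bf P}_{m,n}\}_{m\ge 0}$ with the overlap property~3 is then obtained by repeating the \emph{argument} (not merely the statement) of \cite[Lemma~8]{vas_john} in the metric-tree setting. In other words, the paper's route is to run the greedy cutting algorithm \emph{directly} on $\mathbb{A}$, using (\ref{pr2}) to guarantee that one can cut an edge at an interior point so that the piece below has exactly the target $\Phi$-value; the nested/overlap structure is then produced by the same recursive mechanism that underlies Lemma~\ref{lemma_o_razb_dereva1}, transplanted to the continuous setting.

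Your approach is genuinely different: you discretize $\mathbb{A}$ to a finite combinatorial tree and invoke Lemma~\ref{lemma_o_razb_dereva1} as a black box. This is legitimate and has the advantage that no argument needs to be re-proved in the metric setting; the price is the bookkeeping you already flag (truncating the possibly infinite edge set, and the two-stage iteration across $m=M$). A couple of points to tighten. First, subdividing an edge introduces vertices with exactly one child, so the bound on ${\rm card}\,{\bf V}_1$ remains $\hat k$, not $\max(\hat k,2)$. Second, and more substantively, Lemma~\ref{lemma_o_razb_dereva1} bounds $\Phi$ only on subtrees with at least two vertices; you are implicitly using that every single elementary segment $\sigma_v$ already satisfies $\Phi(\sigma_v)<\Phi(\mathbb{A})/(C_0\cdot 2^Mn)\le \Phi(\mathbb{A})/(2^m n)$ for $m\le M$, which is exactly why the discretization scale must be tied to the \emph{largest} $m$ you treat in that pass --- this is what forces your iteration. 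Third, in the iteration step, to get the overlap bound between ${\bf P}_{M,n}$ and ${\bf P}_{M+1,n}$ you need that each cell $E\in{\bf P}_{M,n}$ is split into only $O_{\hat k}(1)$ pieces at the next level; this holds because inside $E$ you are applying the procedure with $n'=1$ at level $l=1$, so ${\rm card}\,{\bf Q}^E_1\underset{\hat k}{\lesssim}2$. With these points made explicit your argument goes through; the paper's direct route simply avoids the discretization layer altogether.
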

The partition $\mathbf{P}_{0,n}$ was constructed by Solomyak
\cite{solomyak}. In order to construct partitions
$\mathbf{P}_{m,n}$, we repeat arguments in \cite[Lemma
8]{vas_john}.

As an example of a function $\Phi$ satisfying (\ref{pr1}) and
(\ref{pr2}) we can take $\Phi(A)=\prod _{j=1}^l
(\mu_j(A))^{\alpha_j}$, where $\mu_j$ are absolutely continuous
measures, $\alpha_j>0$, $\sum \limits_{j=1}^l \alpha_j=1$. It
follows from H\"{o}lder's inequality and Radon -- Nikodym theorem
(see \cite{vas_john}).

\renewcommand{\proofname}{\bf Proof of Theorem \ref{main_metr1}}
\begin{proof}
Let ${\cal D}\subset {\cal T}$ be the tree such that ${\bf
V}({\cal D}) = \cup _{(j,\, i)\in \hat J} {\bf V}({\cal T}_{j,i})$
for some $\hat J$. By $\tilde{\cal D}$ we denote a tree with
vertex set $${\bf V}(\tilde{\cal D})={\bf V}({\cal D}) \cup
\{\xi'\in {\bf V}_1^{{\cal T}}(\xi):\; \xi \in {\bf
V}_{\max}({\cal D})\}.$$ Denote $\mathbb{D}=(\tilde{\cal D}, \,
\Delta)$. Let $\xi_{j_0, \, i_0}$ be the minimal vertex in ${\cal
D}$, and let $x_*$ be the minimal point in $\mathbb{D}$. We
estimate the norm of $\|I_{g,v,x_*}\|_{L_p(\mathbb{D}) \rightarrow
L_q(\mathbb{D})}$.

Let $\|f\|_{L_p(\mathbb{D})}=1$, $f\ge 0$. Denote
$f_{j,i}=\|f\|_{L_p(\mathbb{T}_{j,i})}$. Then
$$
\left(\int \limits _{\mathbb{D}} v^q(x)\left(\int \limits _{x_*}
^x g(t)f(t)\, dt \right)^q\, dx\right)^{\frac 1q}=
$$
$$
= \left(\sum \limits _{(j,\, i)\in \hat J\,} \int \limits
_{\, \mathbb{T}_{j,i}} v^q(x)\left(\sum \limits _{\xi_{j_0, \,
i_0}\le \xi _{j',i'}\le \xi_{j,i}\,} \int \limits _{\, [x_*, \,
x]\cap \mathbb{T}_{j',i'}} g(t)f(t)\, dt\right)^q\,
dx\right)^{\frac 1q} \stackrel{(\ref{norm_g_metr}),\,
(\ref{norm_v_metr})}{\underset{\mathfrak{Z}}{\lesssim}}
$$
$$
\lesssim \left(\sum \limits _{(j,\, i)\in \hat J} w_j^q \left(
\sum \limits _{\xi_{j_0, \, i_0}\le \xi _{j',i'}\le
\xi_{j,i}} u_{j'}f_{j',i'}\right)^q\right)^{\frac 1q}.
$$
We apply Theorem \ref{two_weight_hardy_tree} and get
\begin{align}
\label{igvx1} \|I_{g,v,x_*}\|_{L_p(\mathbb{D}) \rightarrow
L_q(\mathbb{D})} \underset{\mathfrak{Z}}{\lesssim}
2^{j_0(\beta-\delta)} \Psi(2^{-j_0})
\end{align}
in the case (\ref{beta}), a),
$$
\|I_{g,v,x_*}\|_{L_p(\mathbb{D})
\rightarrow L_q(\mathbb{D})} \underset{\mathfrak{Z}}{\lesssim}
2^{-\theta\left(\frac 1q-\frac
1p\right)_+j_0}j_0^{-\alpha+\left(\frac 1q-\frac
1p\right)_+}\rho(j_0)
$$
in the case (\ref{beta}), b).  Moreover, we have in the case
(\ref{beta}), b)
\begin{align}
\label{igvx2} \|g\|_{L_{p'}(\mathbb{T}_{j,i})}
\|v\|_{L_q(\mathbb{T}_{j,i})} \underset{\mathfrak{Z}}{\lesssim}
2^{-\theta\left(\frac 1q-\frac 1p\right)_+j}j^{-\alpha}\rho(j).
\end{align}
Let
$$
\Phi(A)=\left(\int \limits_A g^{p'}(x)\,
dx\right)^{\frac{1}{p'\left(1-\frac 1p+\frac 1q\right)}}
\left(\int \limits _A v^q(x)\, dx\right)^{\frac{1}{q\left(1-\frac
1p+\frac 1q\right)}}.
$$
Then (\ref{card_metr}), Lemma \ref{metr_razb}, (\ref{igvx1}) and
(\ref{igvx2}) yield Assumptions 1, 2 and 3. Repeating arguments of
Theorem \ref{main_sobol} and taking into account that for $\hat
q>2$ the inequality $\frac{\hat q\delta}{2}>\delta+\frac
12-\frac{1}{\hat q}$ holds, we obtain the desired estimates for
widths.
\end{proof}
\renewcommand{\proofname}{\bf Proof}

Consider the two-weighted summation operator on a combinatorial
tree ${\cal A}$. Let ${\bf V}({\cal A}) =\{\eta_{j,i}:\; j\in
\Z_+, \;\; i\in I_j\}$. Suppose that for some $c_*\ge 1$, $m_*\in
\N$
$$
{\bf V}^{\cal A}_l(\eta_{j,i})\le  c_*\frac{h(2^{-m_*j})}
{h(2^{-m_*(j+l)})}, \quad j, \; l\in \Z_+,
$$
where the function $h$ has the form (\ref{def_h}) in some
neighborhood of zero. Let the functions $u$, $w:{\bf V}({\cal A})
\rightarrow (0, \, \infty)$ be such as in Theorem
\ref{two_weight_hardy_tree}.

We set
$$
S_{u,w}:l_p({\cal A})\rightarrow l_q({\cal A}), \quad
S_{u,w}f(\xi)=w(\xi) \sum \limits _{\xi'\le \xi} u(\xi')f(\xi'),
$$
where $f\in l_p({\cal A})$ (see (\ref{flpg})); denote
$$
{\bf S}_{p,u,w}({\cal A})=\{S_{u,w}f:\; \|f\|_{l_p({\cal A})}\le
1\}.
$$
Write $\mathfrak{Z}=(p, \, q, \, u, \, w, \, h)$.

Applying Theorem \ref{two_weight_hardy_tree}, Remark
\ref{diskr_case} and arguing as in Theorem \ref{main_sobol} in
obtaining lower estimates, we get the following result.
\begin{Trm}
\begin{enumerate}
\item Suppose that $\theta>0$ and $\kappa >-\theta\left(\frac 1q-\frac 1p\right)_+$.
\begin{itemize}
\item Let $p\ge q$ or $p< q$, $\hat q\le 2$.
We set $\sigma(n)=\Psi(n^{-1/\theta} \varphi_{\theta,
\psi_\Lambda}^{-1}(n) )\varphi_{\theta,
\psi_\Lambda}^{-\kappa}(n)$. Then
$$
\vartheta_n({\bf S}_{p,u,w}({\cal A}), \, l_q({\cal A}))
\underset{\mathfrak{Z}}{\lesssim}
n^{-\frac{\kappa}{\theta}+\left(\frac 1q-\frac
1p\right)_+}\sigma(n).
$$
\item Let $p<q$, $\hat q>2$. We set $\theta_1=
\frac{\kappa}{\theta} + \min\left\{\frac 1p-\frac 1q, \, \frac
12-\frac{1}{\hat q}\right\}$, $\theta_2 = \frac{\hat
q\kappa}{2\theta}$, $\sigma_1(n) = \Psi(n^{-1/\theta}
\varphi_{\theta, \psi_\Lambda}^{-1}(n) )\varphi_{\theta,
\psi_\Lambda}^{-\kappa}(n)$, $\sigma_2(n)=\sigma_1(n^{\hat q/2})$.
Let $\theta_{j_*}=\min _{j=1,2} \theta_j$, $\theta_1\ne \theta_2$.
Then
$$
\vartheta_n({\bf S}_{p,u,w}({\cal A}), \, l_q({\cal A}))
\underset{\mathfrak{Z}}{\lesssim}
n^{-\theta_{j_*}}\sigma_{j_*}(n).
$$
\end{itemize}
\item Suppose that $\theta>0$ and $\kappa =-\theta\left(\frac 1q-\frac 1p\right)_+$.
Then $$\vartheta_n({\bf S}_{p,u,w}({\cal A}), \, l_q({\cal A}))
\underset{\mathfrak{Z}}{\lesssim} (\log
n)^{-\alpha+(1-\gamma)\left(\frac 1q-\frac 1p\right)_+} \rho(\log
n)\tau^{-\left(\frac 1q-\frac 1p\right)_+}(\log n).$$
\item Suppose that $\theta=0$ and $\kappa =0$.
\begin{itemize}
\item Let $p\ge q$ or $\hat q\le 2$. We set $\sigma(n)=
\rho\left(n^{\frac{1}{1-\gamma}} \varphi _{1-\gamma,
\tau^{-1}}(n)\right) \varphi_{1-\gamma,\tau^{-1}} ^{-\alpha}(n)$.
Then $$\vartheta_n({\bf S}_{p,u,w}({\cal A}), \, l_q({\cal A}))
\underset{\mathfrak{Z}}{\lesssim}
n^{-\frac{\alpha}{1-\gamma}+\left(\frac 1q-\frac
1p\right)_+}\sigma(n).$$
\item Let $p<q$, $\hat q>2$. We set $\theta_1=\frac{\alpha}{1-\gamma}
+\min\left\{\frac 1p-\frac 1q, \, \frac 12-\frac{1}{\hat
q}\right\}$, $\theta_2=\frac{\hat q\alpha}{2(1-\gamma)}$,
$\sigma_1(n)=
\rho\left(n^{\frac{1}{1-\gamma}}\varphi_{1-\gamma,\tau^{-1}}(n)\right)
\varphi_{1-\gamma,\tau^{-1}}^{-\alpha}(n)$, $\sigma_2(n)=
\sigma_1(n^{\hat q/2})$. Let $\theta_{j_*}=\min _{j=1,2}
\theta_j$, $\theta_1\ne \theta_2$. Then
$$
\vartheta_n({\bf S}_{p,u,w}({\cal A}), \, l_q({\cal A}))
\underset{\mathfrak{Z}_*}{\lesssim}
n^{-\theta_{j_*}}\sigma_{j_*}(n).
$$
\end{itemize}
\end{enumerate}
If there exists $\hat c>0$ such that ${\rm card}\, {\bf V}_k^{\cal
A}(\xi_0) \ge \frac{\hat c}{h(2^{-m_*k})}$ for any $k\in \Z_+$,
then similar lower estimates hold.
\end{Trm}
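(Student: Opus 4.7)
\renewcommand{\proofname}{\bf Proof proposal}
\begin{proof}
The plan is to mirror the scheme used in the proof of Theorem \ref{main_sobol}, but now in the discrete setting of Remark \ref{diskr_case}. Take $X_p(\Omega)=l_p(\mathcal{A})$ as the ambient source space (with $\Omega={\bf V}(\mathcal{A})$ equipped with counting measure), $Y_q(\Omega)=l_q(\mathcal{A})$, and let $\mathcal{P}(\Omega)$ be the one-dimensional subspace of constants (so $r_0=1$), which is the natural choice for summation operators. The pointwise orbit $BX_p(\Omega)={\bf S}_{p,u,w}(\mathcal{A})$ is then identified with the image $S_{u,w}(B_{l_p(\mathcal{A})})$. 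Every $\hat F(\xi)$ is a single vertex, hence an atom of the counting measure, so the condition replacing Assumption \ref{sup2} in Remark \ref{diskr_case} holds automatically, and we may set $\delta_*=+\infty$.

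The first main step is to verify Assumption \ref{sup1}. For any subtree $\mathcal{A}'\subset \mathcal{A}$ rooted at $\hat\xi=\eta_{j_0,i_0}$, define the approximating operator $P_{\Omega_{\mathcal{A}'}}f\equiv 0$; then $\|f-P_{\Omega_{\mathcal{A}'}}f\|_{l_q(\mathcal{A}')} = \|S_{u,w}\tilde f\|_{l_q(\mathcal{A}')}$ where $\tilde f$ is the preimage in $l_p$. Applying Theorem \ref{two_weight_hardy_tree} with the given weights $u_j=2^{-\kappa_u m_*j}\Psi_g(2^{-m_*j})$ and $w_j=2^{-\kappa_w m_*j}\Psi_v(2^{-m_*j})$ gives, for the subtree rooted at $\eta_{j_0,i_0}$,
\begin{equation*}
\|f-P_{\Omega_{\mathcal{A}'}}f\|_{l_q(\mathcal{A}')} \underset{\mathfrak{Z}}{\lesssim} 2^{-\kappa m_* j_0}\Psi(2^{-m_*j_0})\|\tilde f\|_{l_p(\mathcal{A}')}
\end{equation*}
in the case $\kappa>-\theta(\tfrac1q-\tfrac1p)_+$, and the corresponding logarithmic bound in the borderline case. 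This yields the desired $w_*(\eta_{j_0,i_0})$ to plug into (\ref{w_s_2}).

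The second main step is to verify Assumption \ref{sup3}, choosing the partition $\{\mathcal{A}_{t,i}\}$ of $\mathcal{A}$ and the parameters $k_*,\lambda_*,\mu_*,\gamma_*,u_*,\psi_*$ in each of the three main cases, precisely parallelling Cases 1--3 in the proof of Theorem \ref{main_sobol}. For $\theta>0$, $\kappa>-\theta(\tfrac1q-\tfrac1p)_+$: take $k_*=m_*$, singleton trees $\mathcal{A}_{t,i}=\{\eta_{t,i}\}$, $\lambda_*=\mu_*=\kappa$, $u_*(y)=\Psi(y^{-1})$, $\gamma_*=\theta$, $\psi_*=\psi_\Lambda$; Lemma \ref{obr} produces $\varphi_*=\varphi_{\theta,\psi_\Lambda}$, and conditions (\ref{bipf4684gn})--(\ref{2ll}) follow from the $h$-set bound on ${\rm card}\,{\bf V}_l^{\mathcal{A}}$. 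For $\theta>0$ in the borderline case, apply (\ref{phi_g})--(\ref{g0ag}) and Lemma \ref{log} to reduce the log-type factor to the asymptotic form claimed. For $\theta=0$, $\kappa=0$, regroup vertices into $\Gamma_t=\{\eta_{j,i}:2^t\le j<2^{t+1}\}$ as in Case 3 of Theorem \ref{main_sobol}; this gives $\gamma_*=1-\gamma$ and $\psi_*=\tau^{-1}$. Plugging these choices into the two parts of Remark \ref{diskr_case} (equations (\ref{vrth_n_pleq00000}), (\ref{vrth_n_pgq00000}) and the $p<q$, $\hat q>2$ addendum) gives each of the asserted upper bounds.

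For the lower bounds, use Lemma \ref{low_est} with the test functions $\psi_{k,\xi}:=S_{u,w}(e_\xi)$, where $e_\xi$ is the standard unit vector at $\xi\in{\bf V}(\mathcal{A})$: then $\|e_\xi\|_{l_p(\mathcal{A})}=1$ and $\|\psi_{k,\xi}\|_{l_q(\mathcal{A})} \gtrsim u_k w_k \underset{\mathfrak{Z}}{\asymp} 2^{-\kappa m_* k}\Psi(2^{-m_*k})$ for $\xi\in{\bf V}_k^{\mathcal{A}}(\xi_0)$, since the only surviving term in the sum is the contribution downstream of $\xi$. Choose $k=k(n)$ so that ${\rm card}\,{\bf V}_k^{\mathcal{A}}(\xi_0)\asymp n$ (or $\asymp n^{\hat q/2}$ in the $\hat q>2$ case); the hypothesis ${\rm card}\,{\bf V}_k^{\mathcal{A}}(\xi_0)\ge \hat c/h(2^{-m_*k})$ makes this possible. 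The width lower bound then reduces, via (\ref{width_pietsch_stesin}) and Theorem \ref{glus_trm}, to the finite-dimensional width $\vartheta_n(B_p^{c n}, l_q^{c n})$, and unwinding the relation $y^{\gamma_*}\psi_*(y)\asymp n$ through Lemma \ref{obr} and Lemma \ref{log} produces the claimed matching expression. For $p>q$ the same proof uses (\ref{tn_low_est_diag}) summed over vertices at levels $k_*+1,\dots,2k_*$.

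The main obstacle is bookkeeping: matching up the abstract parameters $\lambda_*,\mu_*,\gamma_*,u_*,\psi_*$ in each of the three regimes with the conclusions of Remark \ref{diskr_case}, and, in the borderline $\theta=0$ case, re-expressing $\varphi_*$ via Lemma \ref{log} to obtain the stated $(\log n)$-factors involving $\rho$ and $\tau$. Since all the technical work (Hardy inequality, abstract widths theorem, discrete corollary, finite-dimensional widths) is already in place, the proof is almost entirely a case-by-case parameter identification.
\end{proof}
\renewcommand{\proofname}{\bf Proof}
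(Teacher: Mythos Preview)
Your proposal follows exactly the approach indicated in the paper: verify Assumption~\ref{sup1} via Theorem~\ref{two_weight_hardy_tree}, invoke the discrete variant of the abstract theorem (Remark~\ref{diskr_case}) with the same case split on $(\theta,\kappa)$ as in the proof of Theorem~\ref{main_sobol}, and obtain lower bounds by Lemma~\ref{low_est} with test vectors supported on single level sets of the tree.

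One technical point needs correction. Taking $\mathcal{P}(\Omega)$ to be the constants and $P_{\Omega_{\mathcal{A}'}}\equiv 0$ does not give the inequality you claim: for $\xi\in\mathcal{A}'$ one has $S_{u,w}\tilde f(\xi)=w(\xi)\sum_{\xi'\le\xi}u(\xi')\tilde f(\xi')$, and the inner sum still runs over vertices \emph{below} the root $\hat\xi=\eta_{j_0,i_0}$, whereas Theorem~\ref{two_weight_hardy_tree} bounds only the sum with $\hat\xi\le\xi'\le\xi$ in terms of $\|\tilde f\|_{l_p(\mathcal{A}')}$. The fix is to take $\mathcal{P}(\Omega)=\mathrm{span}\{w\}$ (still $r_0=1$) and let $P_{\Omega_{\mathcal{A}'}}f(\xi)=w(\xi)\cdot f(\hat\xi)/w(\hat\xi)$; then $(f-Pf)(\xi)=w(\xi)\sum_{\hat\xi<\xi'\le\xi}u(\xi')\tilde f(\xi')$, which is precisely the quantity controlled by the Hardy inequality on the subtree. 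With this adjustment the rest of your verification of Assumptions~\ref{sup1} and~\ref{sup3}, the parameter identification in each of the three regimes, and the lower-bound construction with $\psi_{k,\xi}=S_{u,w}e_\xi$ all go through as you describe.
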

If $\theta=0$ and $\kappa >0$, then the widths can be estimated
from above by some decreasing geometric progression.

In conclusion, the author expresses her sincere gratitude to  A.R.
Alimov for help with translation.
\begin{Biblio}

\bibitem{ait_kus1} M.S. Aitenova, L.K. Kusainova, ``On the asymptotics of the distribution of approximation
numbers of embeddings of weighted Sobolev classes. I'', {\it Mat.
Zh.}, {\bf 2}:1 (2002), 3–9.

\bibitem{ait_kus2} M.S. Aitenova, L.K. Kusainova, ``On the asymptotics of the distribution of approximation
numbers of embeddings of weighted Sobolev classes. II'', {\it Mat.
Zh.}, {\bf 2}:2 (2002), 7–14.

\bibitem{besov4} O.V. Besov, ``Integral estimates for differentiable functions on irregular domains,''
{\it Mat. Sb.} {\bf 201}:12 (2010), 69–82 [{\it Sb. Math.} {\bf
201} (2010), 1777–1790].

\bibitem{besov_peak_width} O.V. Besov, ``On Kolmogorov widths
of Sobolev classes on an irregular domain'', {\it Proc. Steklov
Inst. Math.}, {\bf 280} (2013), 34--45.

\bibitem{birm} M.Sh. Birman and M.Z. Solomyak, ``Piecewise polynomial
approximations of functions of classes $W^\alpha_p$'', {\it Mat.
Sb.} {\bf 73}:3 (1967), 331-–355.

\bibitem{boy_1} I.V. Boykov, ``Approximation of Some Classes
of Functions by Local Splines'', {\it Comput. Math. Math. Phys.},
{\bf 38}:1 (1998), 21-29.

\bibitem{boy_2} I.V. Boykov, ``Optimal approximation and Kolmogorov
widths estimates for certain singular classes related to equations
of mathematical physics'', arXiv:1303.0416v1.

\bibitem{m_bricchi1} M. Bricchi, ``Existence and properties of
$h$-sets'', {\it Georgian Mathematical Journal}, {\bf 9}:1 (2002),
13–-32.

\bibitem{ambr_l} L. Caso, R. D’Ambrosio, ``Weighted spaces and
weighted norm inequalities on irregular domains'', {\it J. Appr.
Theory}, {\bf 167} (2013), 42–58.

\bibitem{de_vore_sharpley} R.A. DeVore, R.C. Sharpley,
S.D. Riemenschneider, ``$n$-widths for $C^\alpha_p$ spaces'', {\it
Anniversary volume on approximation theory and functional analysis
(Oberwolfach, 1983)}, 213–222, Internat. Schriftenreihe Numer.
Math., {\bf 65}, Birkh\"{a}user, Basel, 1984.

\bibitem{edm_ev_book} D.E. Edmunds, W.D. Evans, {\it Hardy Operators, Function Spaces and Embeddings}.
Springer-Verlag, Berlin, 2004.

\bibitem{edm_lang1} D.E. Edmunds, J. Lang, ``Gelfand numbers and
widths'', {\it J. Approx. Theory}, {\bf 166} (2013), 78--84.

\bibitem{edm_trieb_book} D.E. Edmunds, H. Triebel, {\it Function spaces,
entropy numbers, differential operators}. Cambridge Tracts in
Mathematics, {\bf 120} (1996). Cambridge University Press.

\bibitem{el_kolli} A. El Kolli, ``$n$-i\`{e}me \'{e}paisseur dans les espaces de Sobolev'',
{\it J. Approx. Theory}, {\bf 10} (1974), 268--294.

\bibitem{evans_har} W.D. Evans, D.J. Harris, ``Fractals, trees and the Neumann
Laplacian'', {\it Math. Ann.}, {\bf 296}:3 (1993), 493--527.

\bibitem{ev_har_lang} W.D. Evans, D.J. Harris, J. Lang, ``The approximation numbers
of Hardy-type operators on trees'', {\it Proc. London Math. Soc.}
{\bf (3) 83}:2 (2001), 390–418.

\bibitem{ev_har_pick} W.D. Evans, D.J. Harris, L. Pick, ``Weighted Hardy
and Poincar\'{e} inequalities on trees'', {\it J. London Math.
Soc.}, {\bf 52}:2 (1995), 121--136.

\bibitem{bib_gluskin} E.D. Gluskin, ``Norms of random matrices and diameters
of finite-dimensional sets'', {\it Math. USSR-Sb.}, {\bf 48}:1
(1984), 173--182.

\bibitem{gur_opic1} P. Gurka, B. Opic, ``Continuous and compact imbeddings of weighted Sobolev
spaces. I'', {\it Czech. Math. J.} {\bf 38(113)}:4 (1988),
730--744.

\bibitem{heinr} S. Heinrich,
``On the relation between linear $n$-widths and approximation
numbers'', {\it J. Approx. Theory}, {\bf 58}:3 (1989), 315–333.

\bibitem{bib_kashin} B.S. Kashin, ``The widths of certain finite-dimensional
sets and classes of smooth functions'', {\it Math. USSR-Izv.},
{\bf 11}:2 (1977), 317–-333.

\bibitem{kudr_nik} L.D. Kudryavtsev and S.M. Nikol’skii, ``Spaces
of differentiable functions of several variables and imbedding
theorems,'' in Analysis–3 (VINITI, Moscow, 1988), Itogi Nauki
Tekh., Ser.: Sovrem. Probl. Mat., Fundam. Napravl. 26, pp. 5–157;
Engl. transl. in Analysis III (Springer, Berlin, 1991), Encycl.
Math. Sci. 26, pp. 1–140.

\bibitem{kudrjavcev} L.D. Kudryavtsev, ``Direct and inverse imbedding theorems.
Applications to the solution of elliptic equations by variational
methods'', {\it Tr. Mat. Inst. Steklova}, {\bf 55} (1959), 3--182
[Russian].

\bibitem{kufner} A. Kufner, {\it Weighted Sobolev spaces}. Teubner-Texte Math., 31.
Leipzig: Teubner, 1980.

\bibitem{leoni1} G. Leoni, {\it A first Course in Sobolev Spaces}. Graduate studies
in Mathematics, vol. 105. AMS, Providence, Rhode Island, 2009.

\bibitem{liz_otel1} P.I. Lizorkin, M. Otelbaev, ``Estimates of
approximate numbers of the imbedding operators for spaces of
Sobolev type with weights'', {\it Trudy Mat. Inst. Steklova}, {\bf
170} (1984), 213–232 [{\it Proc. Steklov Inst. Math.}, {\bf 170}
(1987), 245–266].

\bibitem{mazya1} V.G. Maz’ja [Maz’ya], {\it Sobolev spaces} (Leningrad. Univ.,
Leningrad, 1985; Springer, Berlin–New York, 1985).

\bibitem{myn_otel} K. Mynbaev, M. Otelbaev, {\it Weighted function spaces and the
spectrum of differential operators}. Nauka, Moscow, 1988.

\bibitem{j_necas} J. Ne\v{c}as, ``Sur une m\'{e}thode pour r\'{e}soudre les equations aux
d\'{e}riv\'{e}es partielles dy type elliptique, voisine de la
varitionelle'', {\it Ann. Scuola Sup. Pisa}, {\bf 16}:4 (1962),
305--326.

\bibitem{otelbaev} M.O. Otelbaev, ``Estimates of the diameters
in the sense of Kolmogorov for a class of weighted spaces'', {\it
Dokl. Akad. Nauk SSSR}, {\bf 235}:6 (1977), 1270–1273 [Soviet
Math. Dokl.].

\bibitem{pietsch1} A. Pietsch, ``$s$-numbers of operators in Banach space'', {\it Studia Math.},
{\bf 51} (1974), 201--223.

\bibitem{kniga_pinkusa} A. Pinkus, {\it $n$-widths
in approximation theory.} Berlin: Springer, 1985.

\bibitem{pisier} G. Pisier, {\it The volume of
convex bodies and Banach spaces geometry}. New York: Cambridge
Univ. Press, 1989.

\bibitem{sobolev1} S.L. Sobolev, {\it Some applications of functional analysis
in mathematical physics}. Izdat. Leningrad. Gos. Univ., Leningrad,
1950 [Amer. Math. Soc., 1963].

\bibitem{sobol38} S.L. Sobolev, ``On a theorem of functional
analysis'', {\it Mat. Sb.}, {\bf 4} ({\bf 46}):3 (1938), 471--497
[{\it Amer. Math. Soc. Transl.}, ({\bf 2}) {\bf 34} (1963),
39--68.]

\bibitem{solomyak} M. Solomyak, ``On approximation of functions from Sobolev spaces on metric
graphs'', {\it J. Approx. Theory}, {\bf 121}:2 (2003), 199--219.

\bibitem{stesin} M.I. Stesin, ``Aleksandrov diameters of finite-dimensional sets
and of classes of smooth functions'', {\it Dokl. Akad. Nauk SSSR},
{\bf 220}:6 (1975), 1278--1281 [Soviet Math. Dokl.].

\bibitem{bibl6} V.M. Tikhomirov, ``Diameters of sets in functional spaces
and the theory of best approximations'', {\it Russian Math.
Surveys}, {\bf 15}:3 (1960), 75--111.

\bibitem{tikh_nvtp} V.M. Tikhomirov, {\it Some questions in approximation theory}.
Izdat. Moskov. Univ., Moscow, 1976 [in Russian].

\bibitem{itogi_nt} V.M. Tikhomirov, ``Theory of approximations''. In: {\it Current problems in
mathematics. Fundamental directions.} vol. 14. ({\it Itogi Nauki i
Tekhniki}) (Akad. Nauk SSSR, Vsesoyuz. Inst. Nauchn. i Tekhn.
Inform., Moscow, 1987), pp. 103–260 [Encycl. Math. Sci. vol. 14,
1990, pp. 93--243].

\bibitem{triebel} H. Triebel, {\it Interpolation theory. Function spaces. Differential operators}
(Dtsch. Verl. Wiss., Berlin, 1978; Mir, Moscow, 1980).

\bibitem{triebel1} H. Triebel, {\it Theory of function spaces III}. Birkh\"{a}user Verlag, Basel, 2006.

\bibitem{tr_jat} H. Triebel, ``Entropy and approximation numbers of limiting embeddings, an approach
via Hardy inequalities and quadratic forms'', {\it J. Approx.
Theory}, {\bf 164}:1 (2012), 31--46.

\bibitem{vas_john} A.A. Vasil'eva, ``Widths of weighted Sobolev classes on a John domain'',
{\it Proc. Steklov Inst. Math.}, {\bf 280} (2013), 91--119.

\bibitem{avas2} A.A. Vasil'eva, ``Kolmogorov widths of weighted
Sobolev classes on a domain for a special class of weights. II'',
{\it Russian Journal of Mathematical Physics}, {\bf 18}:4,
465--504.

\bibitem{vas_vl_raspr} A.A. Vasil'eva, ``Embedding theorem for weighted Sobolev
classes on a John domain with weights that are functions of the
distance to some $h$-set'', {\it Russ. J. Math. Phys.}, {\bf 20}:3
(2013), 360--373.

\bibitem{vas_vl_raspr2} A.A. Vasil'eva, ``Embedding theorem for weighted Sobolev
classes on a John domain with weights that are functions of the
distance to some $h$-set. II'', {\it Russ. J. Math. Phys.}, to
appear.

\bibitem{vas_bes} A.A. Vasil'eva, ``Kolmogorov and linear
widths of the weighted Besov classes with singularity at the
origin'', {\it J. Appr. Theory}, {\bf 167} (2013), 1--41.

\bibitem{vas_sing} A.A. Vasil'eva, ``Widths of weighted Sobolev classes
on a John domain: strong singularity at a point'',  {\it Rev. Mat.
Compl.}, to appear.

\bibitem{peak_lim} A.A. Vasil'eva, ``Widths of weighted Sobolev classes on a domain
with a peak: some limiting cases'', arXiv:1312.0081.

\end{Biblio}
\end{document}